\begin{document}

\allowdisplaybreaks

\newcommand{\DATE}{\today}
\newcommand{\TITLE}{Ramification Filtrations of Certain Abelian Lie Extensions of Local Fields}
\newcommand{\TITLERUNNING}{Abelian Lie Extensions}


\newtheorem{theorem}{Theorem}[section]
\newtheorem*{Theorem}{Theorem}
\newtheorem{lemma}[theorem]{Lemma}
\newtheorem{conjecture}[theorem]{Conjecture}
\newtheorem{proposition}[theorem]{Proposition}
\newtheorem{corollary}[theorem]{Corollary}
\newtheorem*{claim}{Claim}

\theoremstyle{definition}
\newtheorem{question}{Question}
\renewcommand{\thequestion}{\Alph{question}}
\newtheorem*{definition}{Definition}
\newtheorem{example}[theorem]{Example}

\theoremstyle{remark}
\newtheorem{remark}{Remark}
\newtheorem*{acknowledgement}{Acknowledgements}



\newenvironment{notation}[0]{%
  \begin{list}%
    {}%
    {\setlength{\itemindent}{0pt}
     \setlength{\labelwidth}{4\parindent}
     \setlength{\labelsep}{\parindent}
     \setlength{\leftmargin}{5\parindent}
     \setlength{\itemsep}{0pt}
     }%
   }%
  {\end{list}}

\newenvironment{parts}[0]{%
  \begin{list}{}%
    {\setlength{\itemindent}{0pt}
     \setlength{\labelwidth}{1.5\parindent}
     \setlength{\labelsep}{.5\parindent}
     \setlength{\leftmargin}{2\parindent}
     \setlength{\itemsep}{0pt}
     }%
   }%
  {\end{list}}
\newcommand{\Part}[1]{\item[\upshape#1]}

\renewcommand{\a}{\alpha}
\renewcommand{\b}{\beta}
\renewcommand{\c}{\chi}
\renewcommand{\d}{\delta}
\newcommand{\e}{\epsilon}
\newcommand{\ve}{\varepsilon}
\newcommand{\g}{\gamma}
\renewcommand{\i}{\iota}
\renewcommand{\k}{\kappa}
\renewcommand{\l}{\lambda}
\renewcommand{\L}{\Lambda}
\newcommand{\m}{\mu}
\newcommand{\n}{\nu}
\newcommand{\om}{\omega}
\newcommand{\p}{\phi}
\newcommand{\ps}{\psi}
\newcommand{\vp}{\varepsilon}
\renewcommand{\r}{\rho}
\newcommand{\s}{\sigma}
\newcommand{\vs}{\varsigma}
\renewcommand{\t}{\tau}

\renewcommand{\th}{\theta}
\newcommand{\vth}{\vartheta}
\renewcommand{\u}{\upsilon}
\newcommand{\x}{\xi}
\newcommand{\z}{\zeta}

\newcommand{\ga}{{\mathfrak{a}}}
\newcommand{\gA}{{\mathfrak{A}}}
\newcommand{\gb}{{\mathfrak{b}}}
\newcommand{\gc}{{\mathfrak{c}}}
\newcommand{\gd}{{\mathfrak{d}}}
\newcommand{\efk}{{\mathfrak{e}}}
\newcommand{\gf}{{\mathfrak{f}}}
\newcommand{\gfk}{{\mathfrak{g}}}
\newcommand{\gh}{{\mathfrak{h}}}
\newcommand{\gi}{{\mathfrak{i}}}
\newcommand{\gj}{{\mathfrak{j}}}
\newcommand{\gm}{{\mathfrak{m}}}
\newcommand{\gn}{{\mathfrak{n}}}
\newcommand{\go}{{\mathfrak{o}}}
\newcommand{\gO}{{\mathfrak{O}}}
\newcommand{\gp}{{\mathfrak{p}}}
\newcommand{\gP}{{\mathfrak{P}}}
\newcommand{\gq}{{\mathfrak{q}}}
\newcommand{\gr}{{\mathfrak{r}}}

\newcommand{\gR}{{\mathfrak{R}}}

\newcommand{\Abar}{{\bar A}}
\newcommand{\Ebar}{{\bar E}}
\newcommand{\Kbar}{{\bar K}}
\newcommand{\Pbar}{{\bar P}}
\newcommand{\Sbar}{{\bar S}}
\newcommand{\Tbar}{{\bar T}}
\newcommand{\ybar}{{\bar y}}
\newcommand{\phibar}{{\bar\f}}

\newcommand{\Acal}{{\mathcal A}}
\newcommand{\Bcal}{{\mathcal B}}
\newcommand{\Ccal}{{\mathcal C}}
\newcommand{\Dcal}{{\mathcal D}}
\newcommand{\Ecal}{{\mathcal E}}
\newcommand{\Fcal}{{\mathcal F}}
\newcommand{\Gcal}{{\mathcal G}}
\newcommand{\Hcal}{{\mathcal H}}
\newcommand{\Ical}{{\mathcal I}}
\newcommand{\Jcal}{{\mathcal J}}
\newcommand{\Kcal}{{\mathcal K}}
\newcommand{\Lcal}{{\mathcal L}}
\newcommand{\Mcal}{{\mathcal M}}
\newcommand{\Ncal}{{\mathcal N}}
\newcommand{\Ocal}{{\mathcal O}}
\newcommand{\Pcal}{{\mathcal P}}
\newcommand{\Qcal}{{\mathcal Q}}
\newcommand{\Rcal}{{\mathcal R}}
\newcommand{\Scal}{{\mathcal S}}
\newcommand{\Tcal}{{\mathcal T}}
\newcommand{\Ucal}{{\mathcal U}}
\newcommand{\Vcal}{{\mathcal V}}
\newcommand{\Wcal}{{\mathcal W}}
\newcommand{\Xcal}{{\mathcal X}}
\newcommand{\Ycal}{{\mathcal Y}}
\newcommand{\Zcal}{{\mathcal Z}}

\renewcommand{\AA}{\mathbb{A}}
\newcommand{\BB}{{\mathbb B}}
\newcommand{\CC}{{\mathbb C}}
\newcommand{\DD}{{\mathbb D}}
\newcommand{\EE}{{\mathbb E}}
\newcommand{\FF}{{\mathbb F}}
\newcommand{\GG}{{\mathbb G}}
\newcommand{\HH}{{\mathbb H}}
\newcommand{\II}{{\mathbb I}}
\newcommand{\JJ}{{\mathbb J}}
\newcommand{\KK}{{\mathbb K}}
\newcommand{\LL}{{\mathbb L}}
\newcommand{\MM}{{\mathbb M}}
\newcommand{\NN}{{\mathbb N}}
\newcommand{\OO}{{\mathbb O}}
\newcommand{\PP}{{\mathbb P}}
\newcommand{\QQ}{{\mathbb Q}}
\newcommand{\RR}{{\mathbb R}}
\renewcommand{\SS}{{\mathbb S}}
\newcommand{\TT}{{\mathbb T}}
\newcommand{\UU}{{\mathbb U}}
\newcommand{\VV}{{\mathbb V}}
\newcommand{\WW}{{\mathbb W}}
\newcommand{\XX}{{\mathbb X}}
\newcommand{\YY}{{\mathbb Y}}
\newcommand{\ZZ}{{\mathbb Z}}

\newcommand{\bfa}{{\mathbf a}}
\newcommand{\bfb}{{\mathbf b}}
\newcommand{\bfc}{{\mathbf c}}
\newcommand{\bfe}{{\mathbf e}}
\newcommand{\bff}{{\mathbf f}}
\newcommand{\bfg}{{\mathbf g}}
\newcommand{\bfp}{{\mathbf p}}
\newcommand{\bfr}{{\mathbf r}}
\newcommand{\bfs}{{\mathbf s}}
\newcommand{\bft}{{\mathbf t}}
\newcommand{\bfu}{{\mathbf u}}
\newcommand{\bfv}{{\mathbf v}}
\newcommand{\bfw}{{\mathbf w}}
\newcommand{\bfx}{{\mathbf x}}
\newcommand{\bfy}{{\mathbf y}}
\newcommand{\bfz}{{\mathbf z}}
\newcommand{\bfA}{{\mathbf A}}
\newcommand{\bfF}{{\mathbf F}}
\newcommand{\bfB}{{\mathbf B}}
\newcommand{\bfD}{{\mathbf D}}
\newcommand{\bfG}{{\mathbf G}}
\newcommand{\bfI}{{\mathbf I}}
\newcommand{\bfL}{{\mathbf L}}
\newcommand{\bfM}{{\mathbf M}}
\newcommand{\bfzero}{{\boldsymbol{0}}}

\newcommand{\Adele}{\textsf{\upshape A}}
\newcommand{\Ahat}{\hat{A}}
\newcommand{\Adot}{A(\Adele_K)_{\bullet}}
\newcommand{\Aut}{\operatorname{Aut}}
\newcommand{\Br}{\operatorname{Br}}  
\newcommand{\Closure}{\textsf{\upshape C}} 
\newcommand{\Disc}{\operatorname{Disc}}
\newcommand{\Div}{\operatorname{Div}}
\newcommand{\End}{\operatorname{End}}
\newcommand{\Fbar}{{\bar{F}}}
\newcommand{\Mbar}{{\overline{\Mcal}}}
\newcommand{\FOD}{\textup{FOM}}
\newcommand{\FOM}{\textup{FOD}}
\newcommand{\Gal}{\operatorname{Gal}}
\newcommand{\GL}{\operatorname{GL}}
\newcommand{\Index}{\operatorname{Index}}
\newcommand{\into}{\hookrightarrow}
\newcommand{\Image}{\operatorname{Image}}
\newcommand{\liftable}{{\textup{liftable}}}
\newcommand{\hhat}{{\hat h}}
\newcommand{\Ksep}{K^{\textup{sep}}}
\newcommand{\Ker}{{\operatorname{ker}}}
\newcommand{\Lsep}{L^{\textup{sep}}}
\newcommand{\Lift}{\operatorname{Lift}}
\newcommand{\LS}[2]{{\genfrac{(}{)}{}{}{#1}{#2}}} 
\newcommand{\plim}{\operatornamewithlimits{\text{$p$}-lim}}
\newcommand{\wlim}{\operatornamewithlimits{\text{$w$}-lim}}
\newcommand{\MOD}[1]{~(\textup{mod}~#1)}
\newcommand{\Norm}{{\operatorname{\mathsf{N}}}}
\newcommand{\notdivide}{\nmid}
\newcommand{\normalsubgroup}{\triangleleft}
\newcommand{\odd}{{\operatorname{odd}}}
\newcommand{\onto}{\twoheadrightarrow}
\newcommand{\Orbit}{\mathcal{O}}
\newcommand{\ord}{\operatorname{ord}}
\newcommand{\Per}{\operatorname{Per}}
\newcommand{\PrePer}{\operatorname{PrePer}}
\newcommand{\PGL}{\operatorname{PGL}}
\newcommand{\Pic}{\operatorname{Pic}}
\newcommand{\Prob}{\operatorname{Prob}}
\newcommand{\Qbar}{{\bar{\QQ}}}
\newcommand{\rank}{\operatorname{rank}}
\newcommand{\Resultant}{\operatorname{Res}}
\renewcommand{\setminus}{\smallsetminus}
\newcommand{\Span}{\operatorname{Span}}
\newcommand{\tors}{{\textup{tors}}}
\newcommand{\Trace}{\operatorname{Trace}}
\newcommand{\twistedtimes}{\mathbin{%
   \mbox{$\vrule height 6pt depth0pt width.5pt\hspace{-2.2pt}\times$}}}
\newcommand{\UHP}{{\mathfrak{h}}}    
\newcommand{\Vdot}{V(\Adele_K)_{\bullet}}
\newcommand{\Wreath}{\operatorname{Wreath}}
\newcommand{\<}{\langle}
\renewcommand{\>}{\rangle}

\newcommand{\Hgt}{\mathrm{Height}}
\newcommand{\hgt}{\mathrm{height}}
\newcommand{\wi}{\mathrm{wideg}}
\newcommand{\longhookrightarrow}{\lhook\joinrel\longrightarrow}
\newcommand{\longonto}{\relbar\joinrel\twoheadrightarrow}
\newcommand{\Ob}{\operatorname{Ob}}

\newcommand{\Stab}{\operatorname{Stab}}
\newcommand{\Spec}{\operatorname{Spec}}
\renewcommand{\div}{{\operatorname{div}}}

\newcommand{\reduce}[1]{\widetilde{#1}}
\newcommand{\helpme}[1]{{\sf $\heartsuit\heartsuit$ Meta-remark: [#1]}}
\newcommand{\fixme}[1]{{\sf $\spadesuit\spadesuit$ Meta-remark: [#1]}}
\newcommand{\series}[2]{#1[\![#2]\!]}
\newcommand{\laurent}[2]{#1(\!(#2)\!)}

\newcounter{CaseCount}
\Alph{CaseCount}
\def\Case#1{\par\vspace{1\jot}\noindent
\stepcounter{CaseCount}
\framebox{Case \Alph{CaseCount}.\enspace#1}
\par\vspace{1\jot}\noindent\ignorespaces}

\title[\TITLERUNNING]{\TITLE}
\date{\DATE}

\author{Liang-Chung Hsia and Hua-Chieh Li}
\noindent \address{Department of Mathematics,
         National Taiwan Normal University,
         Taipei, Taiwan, R. O. C.}
\email{hsia@math.ntnu.edu.tw}
\noindent\address{Department of Mathematics,
         National Taiwan Normal University,
         Taipei, Taiwan, R. O. C.}
\email{li@math.ntnu.edu.tw}

\thanks{The first author was partially supported by NSC Grant  102-2115-M-003-002-MY2 and
  he also acknowledges the support from NCTS.   The second author was
  partially supported by NSC Grant 102-2115-M-003-001.}

\keywords{Ramification group, $p$-adic Lie extension, arithmetic profinite extensions, field of norms, formal power series, Lubin-Tate formal group.}

\begin{abstract}
Let  $G\subset x\series{\FF_q}{x}$ ($q$ is a power of the prime $p$) be a subset of formal power series over a finite field such that it forms a compact abelian $p$-adic Lie group of dimension $d\ge 1$. We establish a necessary and sufficient condition for the APF extension of local field corresponding to $\left(\laurent{\FF_q}{x}, G\right)$ under the field of norms functor to be an extension of $p$-adic fields.  We then apply this result to study family of invertible power series with coefficients in a  $p$-adic integers ring and  commute with a fixed noninvertible power series  under the composition of power series.
\end{abstract}

\maketitle

\section*{Introduction}

The purpose of this note is to study families $G \subset x\series{R}{x}$ of formal power series
 in the  cases where $R$ is  a finite field or a $p$-adic integer ring. In both cases, $G$ is a commutative group under  the operation of compositions of power series.
 In the former case,  $G$  is $p$-adic Lie group of finite dimension whose opposite $G^{0}$ acts on the Laurent series field over $R$ faithfully.
 In the latter case, $G$ consists of all  invertible power series that commute with a fixed  stable, noninvertible power series (see Section~\ref{section:Lubin's Conjecture} for   definitions).

More specifically, in the case where $R$ is a finite field, we're concerned with  $G$ being a closed subgroup of the ramification group $\Ncal(R) = x + x^2 \series{R}{x}$  such that it is isomorphic to $\ZZ_p^d$ for some integer $d\ge 1$  where $p$ is the characteristic of $R.$
It follows from the theory of field of norms that there exists an arithmetically profinite Galois extension $L/K$ of local fields such that $\laurent{R}{x}$ is the field of norms corresponding to the extension $L/K$ and $G$ is isomorphic to the Galois group of the extension.  In this case we say that $L/K$ is the arithmetically profinite extension corresponding to the pair $\left(\laurent{R}{x}, G\right)$ under the field of norms functor.
It is an interesting and in many applications, important question  about the characteristic of the fields $L/K.$ In the case where $G\simeq \ZZ_p$ ($d =1$), let $\s$ be a topological generator of $G$ then  Wintenberger~\cite{win04} shows that $K$  is a $p$-adic field if and only if the limit $\lim_{n\to \infty} i_n(\s)/p^n $ exists. Here $i_n(\s) = i(\s^{p^n})$ denotes the $n$-th ramification number of $\s$ (see \S~\ref{subsec:hasse-herbrand}).

It is natural to seek for a generalization of this result to the case where $G$ has higher $\ZZ_p$ rank. Our first main result gives one such generalization
 which we state as follows.

\begin{Theorem}
Let $\FF_q$ denote the finite field of $q$ elements ($q$ is a power of $p$).
Let $G\simeq \ZZ_p^d$  be a closed abelian subgroup of $\Aut_{\FF_q}\left(\laurent{\FF_q}{x}\right).$
Let $L/K$ be an abelian extension corresponding to $\left(\laurent{\FF_q}{x}, G\right)$
under the field of norms functor. Then  $K$ is a $p$-adic field
if and only if  there exists a constant $\k$ depending on $G$ such that for every non-identity $\s\in G$ we have
\begin{equation*}
\frac{i_{n+2}(\s) - i_{n+1}(\s)}{i_{n+1}(\s) - i_n(\s)} = p^d
\quad\text{for all $n\ge \k$}.
\end{equation*}

\end{Theorem}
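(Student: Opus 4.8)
The plan is to reduce the higher-rank statement to Wintenberger's one-dimensional criterion applied to a well-chosen $\ZZ_p$-quotient of $G$, while using the abelian structure of $G$ to control the ramification numbers $i_n(\s)$ uniformly in $\s$. First I would recall the basic dictionary supplied by the field of norms: the extension $L/K$ is APF with $\Gal(L/K)\simeq G$, and the ramification filtration on $G$ in upper numbering is transported to the filtration $\Ncal(\FF_q)^{u}\cap G$ on the power-series side; in particular the ramification number $i(\s)$ of an element is read off from the coefficient of the lowest-degree nontrivial term of $\s(x)-x$, and the Hasse--Herbrand function $\psi_{L/K}$ translates upper to lower numbering. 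The key structural input is that for an abelian $p$-adic Lie group of dimension $d$ isomorphic to $\ZZ_p^d$, the jumps of the ramification filtration are constrained: by the Hasse--Arf phenomenon in this setting the successive quotients $G^{u}/G^{u+}$ have a controlled behaviour, and for $n$ large the filtration is ``linear'' in the sense that passing from level $n$ to level $n+1$ in the lower numbering multiplies the index by exactly the cardinality $p^d$ of the relevant quotient. Concretely, I would show that $[G:G_{i_n(\s)}]$ or an analogous index grows like a geometric progression of ratio $p^d$ once $n\ge\k$.

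The heart of the forward direction ($K$ $p$-adic $\Rightarrow$ the ratio formula) is as follows. Fix a non-identity $\s\in G$ and let $H=\overline{\<\s\>}\simeq\ZZ_p$ (the closure is a $\ZZ_p$ since $G\simeq\ZZ_p^d$ is torsion-free), with quotient $G/H$. By functoriality of the field of norms, $\s$ generates a $\ZZ_p$-extension sitting inside $L/K$, whose base field is a $p$-adic field (being a finite-type situation over $K$), so Wintenberger's theorem gives that $\lim_n i_n(\s)/p^n$ exists; call it $c(\s)$. The task is to upgrade ``the limit exists'' to the exact second-difference recursion with ratio $p^d$. For this I would analyze $i_n(\s)$ via the Hasse--Herbrand function of the subextension of $L$ cut out by $H$ relative to $K$: since $G$ is abelian of dimension $d$, the quotient $G/H$ contributes a factor governing the ``gap'' in $\psi$, and one computes that $i_{n+1}(\s)-i_n(\s)$ is, for $n\ge\k$, equal to $p^{dn}$ times a constant. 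Dividing consecutive differences yields $p^d$. The constant $\k$ is forced to be independent of $\s$ because the abelian group $G$ has only finitely many ``ramification breaks'' below any bound, and beyond the largest such break the filtration is uniformly geometric; this is where I expect the main technical work, and the main obstacle, to lie — namely, proving that a single $\k$ works for all $\s$ simultaneously, which requires the compactness of $G$ and a uniform lower bound on where the ``stable regime'' of the ramification filtration begins.

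For the converse, I would argue contrapositively: assume the ratio identity holds for all non-identity $\s$ with a uniform $\k$. Then for each such $\s$, summing the geometric progression $i_{n+1}(\s)-i_n(\s)=(i_{\k+1}(\s)-i_{\k}(\s))p^{d(n-\k)}$ gives a closed form $i_n(\s)=A(\s)p^{dn}+B(\s)$ for $n\ge\k$, with $A(\s)>0$. In particular $i_n(\s)/p^n\to A(\s)p^{d\k}\cdot\lim_n p^{(d-1)n}$, which diverges if $d\ge 2$ — so I must instead feed the correctly normalized limit into the one-dimensional characterization: what Wintenberger actually needs for the $\ZZ_p$-subextension generated by $\s$ is the convergence of $i_n(\s)/p^n$ computed \emph{inside that subextension}, where the relevant exponent is $1$, not $d$; the factor $p^{d}$ versus $p$ discrepancy is precisely absorbed by the Hasse--Herbrand function of $G/H$, and unwinding this shows the per-$\s$ limit exists, hence each $\ZZ_p$-subextension has $p$-adic base, and then a descent/gluing argument (using that $K$ is the common base and that $G$ is topologically finitely generated) concludes that $K$ itself is a $p$-adic field. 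The delicate point here is the bookkeeping between the ambient lower numbering on $G$ and the lower numbering intrinsic to $\<\s\>$; I would handle this with the standard transitivity formula $\psi_{L/K}=\psi_{L/M}\circ\psi_{M/K}$ for the tower $K\subset M\subset L$ with $\Gal(L/M)=H$.
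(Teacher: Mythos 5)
There is a genuine gap, and it lies at the very center of your strategy: the reduction to Wintenberger's rank-one criterion via $H=\overline{\langle\s\rangle}$ does not work. The ramification numbers $i_n(\s)$ you want to feed into that criterion are the ones computed in $\laurent{\FF_q}{x}=X_K(L)$, i.e.\ the lower numbering of $H$ \emph{as a subgroup of} $G$. The $\ZZ_p$-extension to which Wintenberger's theorem would apply with these numbers is the extension corresponding to the pair $\left(\laurent{\FF_q}{x}, H\right)$ under the field of norms functor, and that extension is \emph{not} the subextension $L/L^{H}$ of $L/K$ (whose norm field over $L^H$ is a different local field, with different ramification data governed by $\psi_{L/L^H}$). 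Indeed, when $d\ge 2$ the theorem being proved forces $i_n(\s)\asymp p^{dn}$, so $\lim_n i_n(\s)/p^n=\infty$ and Wintenberger's criterion says the auxiliary $\ZZ_p$-extension attached to $\left(\laurent{\FF_q}{x}, H\right)$ is of equal characteristic $p$ --- which carries no information about $K$. You notice this divergence yourself in the converse direction, but the proposed repair (``the $p^d$ versus $p$ discrepancy is absorbed by the Hasse--Herbrand function of $G/H$'') is exactly the hard step and is circular: computing $\psi_{M/K}$ for the infinite intermediate field $M=L^H$ is essentially the original problem. The concluding ``descent/gluing'' is also not needed in the form you state (characteristic is inherited downward from any intermediate field), but more importantly it cannot be reached because the per-$\s$ input is unavailable.

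What the paper actually uses, and what is missing from your outline, are two specific inputs on the full $d$-dimensional group. For the forward direction: Sen's theorem that $G_{L/K}(t)^p=G_{L/K}(t+e)$ for $t$ large (transported to $G$ by Laubie's compatibility of numberings), combined with the integral formula $i_{n+1}-i_n=\int_{\om_n}^{\om_{n+1}}(G:G(t))\,dt$ and the identity $\left(G(t):G(t)^p\right)=p^d$; the uniform $\k$ comes from $p^{\k}=\left(G:G(Y)\right)$ where $Y$ is the threshold in Sen's theorem. For the converse: Wintenberger's characteristic-zero criterion in the form $0<\liminf_{x\to\infty} x/\left(G:G[x]\right)\le\limsup_{x\to\infty} x/\left(G:G[x]\right)<\infty$, verified by choosing, for each $x$, a PID-adapted basis $\{\s_1,\dots,\s_d\}$ with $G[x]$ generated by $\{\s_j^{p^{n_j}}\}$, writing $i_n(\s_j)=\a_j p^{dn}+\b_j$ from the hypothesis, and bounding $\a_j,\b_j$ uniformly via the compactness statement that $i(\cdot)$ is bounded on $G\setminus G^p$. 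Your proposal gestures at ``Hasse--Arf'' and ``linearity of the filtration'' but never identifies either of these mechanisms, so neither direction closes.
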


Let $K$ be a finite extension of $\QQ_p$ and let $\Ocal_K$ be its ring of integers. A formal power series $f(x)\in x\series{\Ocal_K}{x}$  is called invertible if $f'(0)\in \Ocal_K^{\ast}$; otherwise it is called noninvertible. Let $g(x)\in x\series{\Ocal_K}{x}$ be a noninvertible power series with nonzero linear coefficient $g'(0).$  In the paper~\cite{Lub}, Lubin studies the following question:  suppose that there exists an invertible power series $u$ which commutes with $g$ under the composition of power series, i.e. $g\circ u = u\circ g,$ what we can say about the two power series $u$ and $g$?
 Assuming that  the $n$-th iterates $u^{\circ n}$ of $u$ are not the identity for all positive integer $n,$ Lubin suggests that the commutative pair of power series $g$ and $u$ are related to certain formal groups defined over $\Ocal_K.$ Since Lubin's paper was published, there have been some work related to his question, see Section~\ref{section:Lubin's Conjecture} for detailed account.

In this paper, we study the case where $g$ has large set of invertible power series that commute with it. More precisely, we
let $G$ be the set of all invertible power series in $x\series{\Ocal_K}{x}$ commuting with  $g.$ By Lubin's result~\cite[Corollary~1.1.1]{Lub}, the map $u(x)\mapsto u'(0)$ gives rise to an injective group homomorphism from $G$ (under the  operation of composition) to $\Ocal_K^{\ast}.$ We assume that this homomorphism is also surjective. Then, in this case  Lubin's question can be rephrase as: whether or not there exists a Lubin-Tate formal group $\Gcal(x,y)$ over $\Ocal_K$ such that $g$ is an endomorphism and $G$ is the group of automorphism of $\Gcal(x,y).$

Our second main result (Theorem~\ref{thm:lubin's conjecture})  is to give an affirmative answer to this question under the condition that $g'(0)$ is a uniformizer of $\Ocal_K$ or $K$ is an unramified extension of $\QQ_p.$ Before stating the result, we let $v_K : K^{\ast}\to \ZZ$ be the normalized discrete valuation on $K$ and denote by $\partial_0$  the homomorphism sending $u(x)\in G$ to $u'(0)\in \Ocal_K^{\ast}.$

\begin{Theorem}
Let $K$ be an finite extension over $\QQ_p$  with ramification index $e$ and residue degree $f$. Suppose that $g(x)\in x\series{\Ocal_K}{x}$ is a stable noninvertible series with Weierstrass degree equal to  $f \cdot v_K(g'(0)).$ Furthermore, assume that  $\partial_0\left(G\right) =  \Ocal_K^{\ast},$  then $g$ is an endomorphism of a Lubin-Tate formal group defined over $\Ocal_K$ if one of the following conditions holds:
\begin{enumerate}
\item  $v_K(g'(0))=1$ ($e$ can be any positive integer).
\item $e=1$ (i.e. $K$ is an unramified extension over $\QQ_p$) and all the roots of iterates of $g(x)$ are simple.
\end{enumerate}
\end{Theorem}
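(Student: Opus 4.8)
The plan is to use the first main theorem of the paper as the bridge: from the commuting family $G\subset x\series{\Ocal_K}{x}$ we extract a compact abelian $p$-adic Lie group acting on a local field, pass through the field of norms functor, verify the ramification-filtration criterion, and thereby force the target field to be $p$-adic; the formal group is then reconstructed from this rigidity. First I would set up the tower of fields: write $K_n$ for the splitting field over $K$ of the $n$-th iterate $g^{\circ n}(x)$ (equivalently, adjoin a compatible system of roots of the $g^{\circ n}$), and let $K_\infty=\bigcup_n K_n$. Because $g$ is stable with Weierstrass degree $f\cdot v_K(g'(0))$ and $\partial_0(G)=\Ocal_K^\ast$, the group $G$ acts on each layer; the surjectivity onto $\Ocal_K^\ast$ is what guarantees the Galois group of $K_\infty/K$ is as large as possible, namely (a quotient of) $\Ocal_K^\ast$, and in particular contains a finite-index copy of $\ZZ_p^{[K:\QQ_p]}$ coming from the principal units $1+\gp_K$. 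Under the field of norms functor this APF extension corresponds to a pair $(\laurent{\FF_q}{x}, \bar G)$ with $\bar G\simeq\ZZ_p^d$, $d=[K:\QQ_p]=ef$.

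The heart of the argument is computing the ramification numbers $i_n(\s)$ for the action of $G$ on the tower and checking they satisfy the criterion of the first Theorem. In case (1), $\pi:=g'(0)$ is a uniformizer, so $g$ behaves like multiplication-by-$\pi$ on a (putative) Lubin-Tate group: the roots of $g^{\circ n}$ form a single orbit under $G$ up to the lower-level roots, each layer $K_{n+1}/K_n$ is totally ramified of degree $q_K=\#(\Ocal_K/\gp_K)$ after a bounded initial segment, and the standard Lubin-Tate ramification computation gives that consecutive ramification breaks are in geometric progression with ratio $q_K=p^f$... but wait, the exponent in the theorem is $p^d=p^{ef}$, so I must instead track the $\ZZ_p^d$-action coming from all of $1+\gp_K$, not just the $\ZZ_p^f$ from $1+\gp_K^{?}$; concretely the relevant tower is the one cut out by the full unit group, for which the break ratio is $p^d$. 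The point is that the Weierstrass-degree hypothesis $\wi(g)=f\cdot v_K(g'(0))$ is exactly calibrated so that this works: it forces the roots of the iterates to be simple (this is automatic when $v_K(g'(0))=1$, and is imposed by hypothesis in case (2)), hence $K_n/K$ is separable of the expected degree, and the ramification filtration of $\Gal(K_\infty/K)\simeq\ZZ_p^d$ is the standard one whose jumps multiply by $p^d$. Feeding this into the first Theorem yields that $K$ — the base of the APF extension attached to $(\laurent{\FF_q}{x},\bar G)$ — is a $p$-adic field, which here just re-confirms the setup; the real payoff is the converse direction of the equivalence, which pins down the \emph{shape} of the filtration and hence the formal-group law.

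From the rigidity of the ramification filtration I would then reconstruct the Lubin-Tate group. Knowing that for every non-identity $\s\in G$ the higher breaks satisfy $i_{n+2}-i_{n+1}=p^d(i_{n+1}-i_n)$ eventually, one shows the action of $G=\Ocal_K^\ast$ on $K_\infty$ is, layer by layer, conjugate to the Lubin-Tate action: the arithmetic of the breaks determines the norm-compatible system of uniformizers $\{\pi_n\}$, and the group law $\Gcal(x,y)=\lim \pi_n(\dots)$ on the roots is recovered as the unique formal group over $\Ocal_K$ for which $[\,\cdot\,]$ realizes this $\Ocal_K^\ast$-action and for which $g=[\pi]$ (in case (1)) or $g=[g'(0)]$ after identifying $g'(0)$ with the appropriate endomorphism (in case (2)); Lubin's uniqueness results quoted above then identify $g$ as a genuine endomorphism and $G$ as $\Aut\Gcal$. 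The main obstacle I anticipate is case (2): when $e>1$ the single hypothesis "$v_K(g'(0))=1$" is gone and one only has "all roots of iterates are simple" plus $e=1$, so the layers $K_{n+1}/K_n$ need not each be of degree $q_K$, and the clean Lubin-Tate ramification computation must be replaced by a more delicate analysis of how the $\ZZ_p^d$-filtration distributes across a tower whose steps are governed by $g'(0)$ with $v_K(g'(0))$ possibly $>1$; matching the break ratio to $p^d$ there, and checking the Weierstrass-degree normalization really does the job, is where the substance of the proof will lie.
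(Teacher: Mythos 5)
Your proposal captures the broad strategy (field of norms plus the ramification criterion of the first Theorem), but it diverges from what actually has to be done at two critical points, and the divergences are gaps rather than alternative routes.

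First, the tower you build is the wrong object, and the way you invoke the field of norms inverts the logic of the argument. The paper does not consider the splitting fields $K_n$ of the iterates $g^{\circ n}$ over $K$ at all. Instead it reduces the commuting family $\GG=\Stab_{\Ocal_K}(g)$ modulo $\Mcal_K$ to get a group $G\subset\Aut_{\FF_q}\left(\laurent{\FF_q}{x}\right)$ of power series over the \emph{residue field}, and then applies Wintenberger's equivalence to $\left(\laurent{\FF_q}{x},G^{(r)}\right)$ to produce an abstract APF extension $N/M$. The field $M$ is \emph{not} $K$ a priori; showing that $M$ has characteristic zero is genuine content (it is exactly where Theorem~\ref{thm:char0converse} is used, fed by Li's lemmas computing $i_n(\s)$ from Weierstrass degrees of iterates of $g$), and one must then separately prove that $M$ has the same residue degree and ramification index as $K$ and that $N/M$ is the \emph{maximal} totally ramified abelian extension. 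Your remark that the criterion ``just re-confirms the setup'' because the base field is $K$ misses this entirely. Your alternative — that $\Gal(K_\infty/K)$ is abelian and essentially equal to $\Ocal_K^{\ast}$ because $\partial_0(G)=\Ocal_K^{\ast}$ — is unproved and essentially circular: elements of $\GG$ permute the roots of $g^{\circ n}$, but identifying that action with a Galois action on an abelian extension of $K$ is tantamount to already knowing $g$ comes from a Lubin--Tate group.

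Second, the reconstruction step is missing. Even granting that $N/M$ is the maximal totally ramified abelian extension and that $G$ is the reduction mod $\Mcal_M$ of $\Aut$ of a Lubin--Tate group $F$ over $\Ocal_M$, one still has to produce a Lubin--Tate group over $\Ocal_K$ having the characteristic-zero series $g$ as an endomorphism. ``The group law is recovered as the unique formal group for which $[\cdot]$ realizes this action'' is not an argument. In case (1) the paper does this with a short trick: writing $\reduce{g}(x)=\g(x^{q})$, finding $u\in\GG$ with $\reduce{u}=\g$ (via a maximal-order argument in the division algebra $\End(\Fcal)\otimes\QQ_p$), and observing that $h=g\circ u^{\circ(-1)}$ satisfies $h(x)\equiv x^q\pmod{\Mcal_K}$ and $v_K(h'(0))=1$, so Lubin--Tate theory applies directly. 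In case (2) the paper runs an inductive deformation argument with the spaces $Z_r(\z,\m)$, $Z_r^o(\z,\m)$, $B_r(\z,\m)$ and the vanishing $\dim(Z_r^o/B_r)=0$ to build a conjugating series $\p$ with $\p\circ g\circ\p^{-1}=[\b]_F$; this occupies most of the proof and rests on a separate congruence lemma for derivatives. Your proposal correctly senses that case (2) is where the substance lies, but offers no mechanism for it (and the discussion there conflates $e>1$ with the hypothesis $e=1$). As written, the proposal would not close.
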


We remark that Sarkis~\cite{sarkis10} applies ideas in~\cite{LMS} to  prove that for $g(x)\in x\series{\ZZ_p}{x}$ under the condition that $v_p(g'(0)) = 1$, Weierstrass degree  equal to $p$ and $\partial_0(G) = \ZZ_p^{\ast}$, there exists a formal group over $\ZZ_p$ such  that $g$ is an endomorphism and $G$ is the group of automorphisms of the formal group. Our result extends his to more general situations. One of the new ingredients here is our generalization of Wintenberger's result mentioned above. We refer the reader to Section~\ref{section:Lubin's Conjecture} for details.

The plan of our paper is as follows. In Section~\ref{section:preliminary}, we give an overview of ramification subgroup of automorphisms of a local field of positive characteristic. We also review some facts about arithmetically profinite extensions as well as the construction of the field of norms. After these preliminaries, we prove our first main result in Section~\ref{section:APF}. We divide the statement  and proof of our result  into two parts. We first prove the necessary condition in Theorem~\ref{thm:char0lowernumbergrowth} and then prove the sufficient condition in Theorem~\ref{thm:char0converse}. Section~\ref{section:Lubin's Conjecture} is devoted to the study of Lubin's conjecture. In this long section, we give a possible detailed overview on Lubin's conjecture. After some preparations in \S~\ref{subsec:reduction} and \S~\ref{subsec:apply FON}, we begin the proof for our second main result in \S~\ref{subsec:pf Lubin conj} and finally we give some remarks in \S~\ref{subsec:final remarks}.


\section{Preliminaries}
\label{section:preliminary}
\subsection{Wild ramification}
\label{subsec:hasse-herbrand}

Let $k$ be a perfect field of characteristic $p>0$ and let $\Gcal_0(k)$ denote the set of power series in $\series{k}{x}$ whose leading term is of degree one. Then $\Gcal_0(k)$ is a group under the compositions of power series and  $\Gcal_0(k)^{op}$ acts on $\laurent{k}{x}$ by substitution $g\mapsto g\circ f$ for $g\in \laurent{k}{x}$ and $f\in \Gcal_0(k)$ . Moreover, this gives an isomorphism between $\Gcal_0(k)^{op}$ and $\Aut_k\left(\laurent{k}{x}\right)$. The subgroup $\Ncal(k) = x + x^2 \series{k}{x}$ of $\Gcal_0(k)$,  known as the Nottingham group or ramification subgroup of $\Aut_k\left(\laurent{k}{x}\right)$, is a pro-$p$ group.

Recall that the ramification number of $\s \in \Aut_k\left(\laurent{k}{x}\right)$ is defined by
$$i(\sigma)=\ord_x(\sigma(x)-x) - 1.$$
By convention, we set $i(\s) = \infty$ for $\s(x) = x$ (the identity automorphism). The automorphism $\s$ is called a wildly ramified automorphism if $i(\s) \ge 1.$ Equivalently, $\s$ is wildly ramified if and only if $\s \in \Ncal(k).$ In this case, we set $i_m(\s) = i(\s^{p^m})$ for $m\ge 0$.  Then  an elementary fact about the sequence $\{i_m(\s)\}$ is that it is a strictly increasing sequence of integers if $\s$ is not of finite order. It was proved by S.~Sen~\cite[Theorem~1]{sen69} that for wildly ramified automorphism $\s$ of infinite order, we have  $i_{n}(\s) \equiv i_{n-1}(\s) \pmod{p^n}$ for all $n>0$. As a consequence, if $\s$ is not of finite order, then $i_n(\s)$ grows at least exponentially in $n$, namely

\begin{equation}
\label{eq:in lower bound}
i_n(\s) \ge \sum_{j=0}^n p^j.
\end{equation}

Notice that  the ramification numbers  $i(\cdot)$ gives rise to a natural
filtration on $\Gcal_0(k).$ 
In particular, for any closed subgroup  $G$ of $\Gcal_0(k)$  
its {\em lower  numbering} is given by
\[
G[s] := \{\s\in G \mid i(\s)\ge s\}\quad \text{for all $s\in\RR$}.
\]
It follows from the definition that $G[s] = G$ for $s\le 0.$  In the case where $G[s]$ is of finite index in $G$ for all $s\in \RR$, the following integral is well-defined  
\[
\p_G(s) = \int_0^s \,\frac{dt}{\left(G : G[t]\right)}\quad \text{for
  all $s\in \RR$}.
\]
The function $\p_G$ is a piecewise linear and increasing function. Its inverse is denoted by $\ps_G$ which in this paper we call the Hasse-Herbrand function for $G$. Then
the {\em upper numbering} of $G$ is defined via $\ps_G$.
Namely, $G(y) := G[\ps_G(y)]$ for all $y\in \RR.$
Thus,  $G[s] = G(\p_G(s))$ for all $s\in \RR$. Moreover, we have
\[
\ps_G(y) = \int_0^y \,\left(G: G(t)\right) dt .
\]
The following is a  basic property that follows from the definition of lowering numbering.

\begin{proposition}
\label{proposition:lower-upper-numbering}
Let $G$ be a closed subgroup of $\Aut_k\left(\laurent{k}{x}\right)$
and let $H$ be a closed subgroup of $G$.
Then $H[s] = H \cap G[s]$ for any $s\in \RR.$
\end{proposition}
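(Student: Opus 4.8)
The plan is simply to unwind the definition of the lower numbering and observe that the ramification number $i(\cdot)$ is an intrinsic invariant of an automorphism of $\laurent{k}{x}$, carrying no reference to an ambient group. Recall from \S~\ref{subsec:hasse-herbrand} that for any $\s\in\Aut_k\left(\laurent{k}{x}\right)$ one sets $i(\s)=\ord_x(\s(x)-x)-1$, and that for a closed subgroup $G$ one defines $G[s]=\{\s\in G\mid i(\s)\ge s\}$. Applying this definition to $H$ itself gives $H[s]=\{\s\in H\mid i(\s)\ge s\}$, while applying it to $G$ and intersecting with $H$ gives
\[
H\cap G[s]=\{\s\mid \s\in H,\ \s\in G,\ i(\s)\ge s\}=\{\s\in H\mid i(\s)\ge s\},
\]
the condition ``$\s\in G$'' being automatic since $H\subseteq G$. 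Hence $H\cap G[s]=H[s]$, which is the assertion.

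The closedness hypothesis on $H$ plays no role in this set-theoretic identity; it only serves to guarantee that both sides are honest closed subgroups of $G$ — intersections of closed subgroups are closed — so that the statement has content at the level of the filtration $\{H[s]\}_s$ and hence of the Hasse--Herbrand function $\ps_H$. I therefore expect no genuine obstacle here: the proposition is pure definition-chasing.

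The one point worth flagging — and presumably the reason the statement is recorded explicitly — is that the corresponding compatibility \emph{fails} for the upper numbering $G(y)=G[\ps_G(y)]$, since $\ps_G$ and $\ps_H$ differ in general and the relation between $H(y)$ and the $G(\cdot)$ is governed by Herbrand-type transition formulas rather than by a naive intersection. Thus the proposition should be read as the assertion that it is precisely the \emph{lower} numbering, built directly from the group-independent quantity $i(\s)$, that restricts transparently to closed subgroups.
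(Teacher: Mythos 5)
Your argument is correct and is exactly what the paper intends: the paper gives no proof, merely remarking that the proposition ``follows from the definition of lowering numbering,'' and your definition-chasing (using that $i(\s)$ is intrinsic to $\s$ and independent of the ambient group) is precisely that argument. Your closing remark about why the analogous statement fails for the upper numbering is a correct and worthwhile observation, consistent with how the paper uses Herbrand-type transitions elsewhere.
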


A real number $s$ is called a {\em break} (or jump) of the lower numbering for
$G$ if $G[s] \ne G[s+\ve]$ for any $\ve > 0.$ Likewise, if $G(t)\ne
G(t+\ve)$ then $t$ is a break of upper numbering. It follows from the
definition that the set of breaks of the lower numbering for $G$ is a
subset of integers. We'll denote the set of breaks of lower numbering
by $\{l_m\mid m=0,1, \ldots\}$ and   the set of breaks of upper
numbering by $\{u_m\mid m=0,1, \ldots\}.$ We have  $u_m = \p_G(l_m)$
for all integers $m \ge 0.$

\begin{remark}
\label{lem:subgroupbreaks}
Let $H$ be a closed subgroup of  $G$ and let $\ell$ be a break of the lower numbering of $H$. It follows from  Proposition~\ref{proposition:lower-upper-numbering} that $\ell$ is also a break of the lowering numbering of $G.$
In particular, if $H\simeq \ZZ_p$ is generated by $\s\in G$, then the set $\{i_n(\s)\mid n=0, 1, \ldots\}$ is just the set of
breaks of the lower numbering for $H$. Consequently, the sequence $\{i_n(\s)\}$ is a subset of the breaks of the lower numbering of $G.$

\end{remark}

\subsection{Arithmetically profinite extensions and field of norms}
\label{subsec:field of norms}
In this subsection, we summarize some basic facts from the theory of field of norms which was first investigated by
J.-M. Fontaine and J.-P. Wintenberger~\cite{FW79I, FW79II}. Let $K$ be a local field with perfect residue field
$\widetilde{K}$ of characteristic $p$ and let $L$ be an infinite {\em arithmetically  profiinite} (APF) extension of $K$ (see~\cite{win83}). Let  $X_K(L)$ denote the field of norms of $L/K$.  The  multiplicative group of $X_K(L)$  is given by
\[
X_K(L)^{\ast} = \varprojlim_{E\in \Ecal_{L/K}} E^{\ast}
\]
where $\Ecal_{L/K}$ is the set of all finite subextensions of $L/K$ and the inverse limit is taken with respect to the norms $N_{F/E} : F^{\ast}\to E^{\ast}$ for finite subextensions $F \supseteq E \supseteq K$ of $L/K$. Hence, $X_K(L) = X_K(L)^{\ast}\cup \{0\}$ and the set of nonzero elements of $X_K(L)$ consists of all norm-compatible sequences
$$\left\{\left(a_E\right)_{E\in \Ecal_{L/K}} \mid a_E \in E \;\text{and}\; a_E = N_{F/E}(a_F) \;\text{if}\; E \subseteq F\right\}.$$
In fact, $X_K(L)$ is a local field of characteristic $p$ whose residue field $\widetilde{X_K(L)}$ is isomorphic to $\widetilde{L}.$

It follows from   the construction that the field of norms establishes a faithful functor between the category of infinite APF extensions of $K$ with morphisms consisting of finite separable $K$-embeddings and the category of local fields of characteristic $p$ with morphisms consisting of finite separable embeddings~\cite[\S3]{win83}.
In the case where the APF extension $L$ is Galois over $K$, Wintenberger~\cite[Corollary~3.3.4]{win83} shows that the Galois group $G_{L/K}$ acts on $X_K(L)$ faithfully. Therefore, $G_{L/K}$ is isomorphic to  a closed subgroup $X_K(G_{L/K})$ of the group $\Aut(X_K(L))$ of continuous automorphisms of $X_K(L).$ Moreover, this identification preserves upper ramification subgroups. Namely, we have $X_K(G_{L/K}(t)) = X_K(G_{L/K})(t)$ for all real number $t$ where $G_{L/K}(t)$ denotes the upper ramification subgroup of $G_{L/K}$ with $t$ an upper numbering for $G_{L/K}$.

\subsection{$p$-adic Lie extensions}
\label{subsec:Lie extension}

Let $K$ be a local field and let $L/K$ be a Galois extension.  The extension field $L$ is called a $p$-adic Lie extension of $K$ if the Galois group $G_{L/K}$ of $L/K$ is a  $p$-adic Lie group of finite dimension. In the case where $K$ is a $p$-adic local field, S.~Sen~\cite{sen72} proves that $L$ is an APF extension of $K.$
Wintenberger~\cite{win80} shows that there is an equivalence  between the category of abelian $p$-adic Lie extensions and the category of local fields of characteristic $p$ equipped with a continuous action by compact abelian $p$-adic  Lie groups.

More precisely,  let the two categories $\Acal$ and $\Rcal$ be defined as follows.
Objects of category $\Acal$ are infinite Galois  extensions $L/K$ whose Galois group $G_{L/K}$ is an abelian $p$-adic  Lie group and the residue field $\widetilde{L}$ is a finite extension of $\widetilde{K}.$ An $\Acal$-morphism from $L/K$ to $L'/K'$ is defined to be a continuous embedding $\s : L \to L'$ such that $L'$ is a finite separable extension of $\s(L)$ and $K'$ is a finite extension of $\s(K).$

An object of $\Rcal$ consists of couples $(X, G)$ where $X$ is a complete local field of characteristic $p$ with perfect residue field and $G$ is a compact abelian $p$-adic  Lie group of  finite dimensional which is a closed subgroup  of the group $\Aut(X)$ of continuous automorphisms of $X$. An $\Rcal$-morphism $(j_1, j_2)$ from $(X,G)$ to $(X',G')$ consists of continuous embedding $j_1 : X \to X'$  such that $X'$ is a finite separable extension of $j_1(X)$ and continuous group homomorphism $j_2 : G' \to G$ such that $g'\circ j_1 = j_1 \circ j_2(g')$ for all $g'\in G'.$ Let $\Fcal : \Acal \to \Rcal$ be the functor such that $\Fcal(L/K) = (X_K(L), X_K(G_{L/K}))$ for $L/K\in \Ob(\Acal)$. Then,

\begin{theorem}[Wintenberger~\cite{win80}]
\label{thm:win80}
$\Fcal$ is an equivalence of categories.
\end{theorem}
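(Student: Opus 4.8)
The plan is to verify the three conditions that make $\Fcal$ an equivalence: faithfulness, fullness, and essential surjectivity. Faithfulness is essentially already in hand: on underlying fields $\Fcal$ is the field of norms functor $X_K$, which is faithful by Fontaine--Wintenberger (\S\ref{subsec:field of norms}), and the group component $j_2$ of an $\Rcal$-morphism is uniquely pinned down by its field component $j_1$ through the relation $g'\circ j_1 = j_1\circ j_2(g')$, since $X_K(G_{L/K})$ sits faithfully inside $\Aut\bigl(X_K(L)\bigr)$ by~\cite[Corollary~3.3.4]{win83}. So the real content is fullness and essential surjectivity, and for both I would rely on the strong form of the field of norms theorem, namely the canonical isomorphism $\Gal\bigl(\Ksep/L\bigr)\cong\Gal\bigl(X_K(L)^{\textup{sep}}/X_K(L)\bigr)$, which in particular makes $X_K$ fully faithful on APF extensions and lets one translate finite separable extensions of $X_K(L)$ into finite extensions of $L$.

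For fullness, given $\Acal$-objects $L/K$, $L'/K'$ and an $\Rcal$-morphism $(j_1, j_2)\colon \Fcal(L/K)\to\Fcal(L'/K')$, use fullness of $X_K$ to lift $j_1$ to a $K$-embedding $\s\colon L\to L'$; the hypothesis that $X_K(L')$ is finite separable over $j_1\bigl(X_K(L)\bigr)$ then translates into the hypothesis that $L'$ is finite over $\s(L)$. It remains to check that $\s$ carries $K$ into $K'$ with $K'/\s(K)$ finite. For this I would use that the pair $\bigl(X_K(L), G_{L/K}\bigr)$ determines the base $K$ along with the whole lattice of intermediate fields of $L/K$ (under the field-of-norms dictionary these correspond to the $G_{L/K}$-stable pieces of $X_K(L)$); the compatibility $g'\circ j_1 = j_1\circ j_2(g')$ then forces $\s(K)\subseteq K'$, and finiteness of $K'/\s(K)$ is read off by comparing the $p$-adic Lie dimensions of $G_{L/K}$ and $G_{L'/K'}$ together with the residue-field condition built into the definition of $\Acal$.

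For essential surjectivity -- the heart of the matter -- start with $(X, G)\in\Ob(\Rcal)$ and set $E := X^{G}$, a complete discretely valued field of characteristic $p$ with perfect residue field; since $G$ is a compact $p$-adic Lie group it is topologically finitely generated and virtually pro-$p$, so $G[s]$ has finite index in $G$ for every $s$ and $X/E$ is an APF Galois extension with group $G$. It remains to produce a characteristic-zero APF extension $L/K$ with $\Fcal(L/K)\cong(X,G)$. I would do this in two steps. First, realize $E$ as a field of norms: choose a characteristic-zero local field $K$ -- a suitable finite extension of $\QQ_p$ manufactured from $W(\widetilde E)$ -- together with an \emph{abelian} APF tower $L_0/K$, e.g.\ a Lubin--Tate extension, for which $X_K(L_0)\cong E$, first treating the case $G\cong\ZZ_p$ and building up general abelian $G$ afterwards. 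Second, transport the APF Galois extension $X/E$ across the identification $\Gal\bigl(E^{\textup{sep}}/E\bigr)\cong\Gal\bigl(\Kbar/L_0\bigr)$ to obtain an APF extension $L/L_0$, hence $L/K$, with $G_{L/L_0}\cong G$ and $\Fcal(L/K)=(X,G)$ up to the canonical identifications; that the ramification data match on both sides is guaranteed by the upper-numbering compatibility $X_K\bigl(G_{L/K}(t)\bigr)=X_K(G_{L/K})(t)$ recalled in \S\ref{subsec:field of norms}.

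The step I expect to be the main obstacle is forcing $L/K$ to be \emph{abelian over the base $K$}, not merely Galois over $L_0$ with abelian Galois group -- this is exactly where the abelianness hypotheses in the definitions of $\Acal$ and $\Rcal$ are genuinely needed. Controlling it amounts to choosing $K$ and the tower $L_0/K$ so that the compositum of $L_0/K$ with the $G$-tower stays abelian over $K$, which one arranges through local class field theory for $K$ together with the compatibility of the $G$-action with the valuation and with the Lubin--Tate structure on $E$. The companion bookkeeping concern -- that the reconstructed $L/K$ carries precisely the ramification filtration dictated by $(X, G)$ -- is, by contrast, already encoded in the upper-numbering compatibility of the field of norms functor.
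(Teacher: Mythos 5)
The paper offers no proof of this statement at all: Theorem~\ref{thm:win80} is quoted verbatim from Wintenberger's note~\cite{win80}, so there is no internal argument to compare yours against. Judged on its own terms, your sketch has two genuine gaps, both in the essential-surjectivity step, which is indeed the heart of the matter.

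First, the step ``set $E := X^G$ \dots\ so $X/E$ is an APF Galois extension with group $G$'' does not work. A complete discretely valued field admits no infinite separable algebraic extension that is again complete and discretely valued, so $X$ cannot literally be an infinite Galois extension of a local subfield with group $G$. Concretely, if $\s\in\Ncal(\FF_q)$ is wildly ramified of infinite order, its fixed field in $\laurent{\FF_q}{x}$ collapses to the constants: were $X^{\<\s\>}$ a local field, $X$ would be finite over it and $\s$ would have finite order. The entire difficulty of the Fontaine--Wintenberger theory is precisely that the tower $L/K$ must be reconstructed from $(X,G)$ by other means (Wintenberger builds it through the ramification filtration of $G$ and successive finite extensions); it cannot be read off as a fixed field inside $X$. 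Second, your plan insists on producing a \emph{characteristic-zero} extension $L/K$, but the category $\Acal$ of the paper places no characteristic restriction on $K$, and essential surjectivity onto characteristic-zero extensions is false: by Wintenberger's criterion~\cite{win04} (and Theorems~\ref{thm:char0lowernumbergrowth} and~\ref{thm:char0converse} of this paper), a pair $\left(\laurent{\FF_q}{x}, \ZZ_p\s\right)$ with $\lim_n i_n(\s)/p^n=\infty$ corresponds to an equal-characteristic extension, so no Lubin--Tate tower over a finite extension of $\QQ_p$ can realize it. Determining \emph{which} pairs $(X,G)$ come from characteristic zero is exactly the subject of Section~\ref{section:APF}, not something available for free in the equivalence. (Your faithfulness argument is fine, and the fullness sketch is plausible modulo the fact that lifting $j_1$ between fields of norms over \emph{different} bases $K, K'$ requires the full functoriality of~\cite[\S3]{win83} rather than fullness over a fixed base; but these are secondary to the two points above.)
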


\begin{remark}
In the following, we'll simply say that $L/K$ is an  extension corresponding to $(X,G)$ under the field of norms functor if  $(X,G)= \Fcal(L/K)$ for $(X, G)\in \Ob(\Rcal)$.
\end{remark}

Notice that the Galois extension $L/K$ is an APF extension of $K$ if $G_{L/K}(t)$ are open in $G_{L/K}$ for all real $t.$  In this case, the Hasse-Herbrand function
\[
\psi_{L/K}(s) = \int_0^s \left(G_{L/K} : G_{L/K}(t)\right) \, d t
\]
is well-defined for all $s\ge -1$. Let $\p_{L/K}$ be the inverse of $\psi_{L/K}.$ Then, the lowering numbering of $G_{L/K}$ is defined by $G_{L/K}[s] = G_{L/K}(\p_{L/K}(s))$ for all real $s\ge -1.$ The  following result is due to  Laubie~\cite{laubie88}.

\begin{proposition}
\label{prop:matching lower-number}
Let $L/K\in \Acal$ and let $\Fcal(L/K) = (X, G)$. Then, for all $x\ge -1$ we have $G_{L/K}[x] \simeq G[x].$
\end{proposition}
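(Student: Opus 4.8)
The plan is to derive the matching of \emph{lower} numberings from the matching of \emph{upper} numberings recorded in \S\ref{subsec:field of norms}, by noting that on both sides the lower numbering is recovered from the upper one through a Hasse--Herbrand function, and that these two Hasse--Herbrand functions coincide. Write $G = X_K(G_{L/K})$; the faithful action of $G_{L/K}$ on $X = X_K(L)$ identifies $G_{L/K}$ with $G$ as topological groups, and --- this is exactly the compatibility recalled in \S\ref{subsec:field of norms} --- under this identification the upper ramification subgroup $G_{L/K}(t)$ of the Galois group is carried onto the upper ramification subgroup $G(t)$ of $G\subseteq\Aut(X)$ for every real $t$.

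First I would use that $L/K\in\Acal$ is APF, so each $G_{L/K}(t)$, and hence each $G(t)$, is open and of finite index, with $\left(G_{L/K}:G_{L/K}(t)\right) = \left(G:G(t)\right)$ for all $t$. Substituting into the two formulas
\[
\psi_{L/K}(y) = \int_0^y \left(G_{L/K}:G_{L/K}(t)\right)\,dt,
\qquad
\psi_{G}(y) = \int_0^y \left(G:G(t)\right)\,dt
\]
gives $\psi_{L/K}=\psi_G$, hence $\p_{L/K}=\p_G$ for the common inverse.

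It then remains to unwind the definitions. By \S\ref{subsec:Lie extension}, the Galois-side lower numbering is $G_{L/K}[x] = G_{L/K}\left(\p_{L/K}(x)\right)$; by \S\ref{subsec:hasse-herbrand}, the automorphism-side lower numbering satisfies $G[x] = G\left(\p_G(x)\right)$. Since $\p_{L/K}=\p_G$ and the identification $G_{L/K}\simeq G$ carries every upper ramification subgroup of one side to the corresponding one of the other, it restricts to an isomorphism $G_{L/K}[x]\simeq G[x]$ for every $x\ge -1$.

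The one place that requires genuine care --- and the main obstacle in turning this sketch into a fully rigorous proof --- is checking that the normalizations really agree: that the upper numbering of $G$ viewed as an abstract closed subgroup of $\Aut(X)$, built in \S\ref{subsec:hasse-herbrand} from the filtration by $i(\cdot)$ together with its own Hasse--Herbrand function, is literally the object $X_K(G_{L/K})(t)$ appearing in the preservation statement, with indices aligned (no shift) and with the behaviour near $t=-1$ and $t=0$ matched on the two sides. Once this bookkeeping is settled the proof is the short chain of identities above; the substantive input --- a direct, level-by-level comparison of the two lower filtrations in the style of Laubie's original argument --- is precisely what is already packaged into the upper-numbering compatibility we are permitted to invoke.
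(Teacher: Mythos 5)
Your proposal is correct, but note that the paper does not actually prove this proposition: it is stated as a result of Laubie and simply cited, so there is no in-paper argument to compare against line by line. What you give is a legitimate short derivation that works entirely from the definitions adopted in \S\ref{subsec:hasse-herbrand} and \S\ref{subsec:Lie extension}: since the Galois-side lower numbering is \emph{defined} by $G_{L/K}[x]=G_{L/K}(\phi_{L/K}(x))$ and the automorphism side satisfies $G[x]=G(\phi_G(x))$, the whole statement reduces to (a) the upper-numbering compatibility $X_K(G_{L/K}(t))=X_K(G_{L/K})(t)$ recalled in \S\ref{subsec:field of norms}, and (b) the equality $\phi_{L/K}=\phi_G$, which you correctly extract from the equality of indices $\left(G_{L/K}:G_{L/K}(t)\right)=\left(G:G(t)\right)$ fed into the two integral formulas for $\psi$. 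Laubie's original argument is a direct, level-by-level comparison of ramification numbers through the finite layers of the tower, which is what actually establishes that the automorphism-side filtration by $i(\cdot)$ has finite indices and that the two normalizations of the upper numbering agree; your route trades that computation for the citation of Wintenberger's compatibility, which is exactly the trade the paper itself makes. The one caveat you flag is the right one, and I would add a second instance of it: to even write $\psi_G(y)=\int_0^y\left(G:G(t)\right)dt$ you must already know $\left(G:G[t]\right)<\infty$ for all $t$ (otherwise $\phi_G$, hence $G(t)$, is not defined on the automorphism side), so the finiteness of these indices has to be taken as part of the package $(X,G)\in\Ob(\Rcal)$ coming from an APF extension rather than deduced afterwards from the upper-numbering compatibility. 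With that ordering of hypotheses made explicit, your argument is complete.
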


\section{Arithmetic profinite extensions over field of characteristic 0}
\label{section:APF}

By Theorem~\ref{thm:win80}, for any given pair $(X, G)\in \Ob(\Rcal)$ there exists an abelian $p$-adic Lie extensions $L/K$ corresponding to $(X,G)$ under the field of norms functor. In this section we're concerned with whether or not the field $K$ is of characteristic zero. We'll consider the case where $k$ is the finite field $\FF_q$ of $q$ elements  ($q$ a power of $p$) and $X = \laurent{\FF_q}{x}$ is the local field in question.

Let $G$ be a compact  abelian $p$-adic  Lie subgroup of $\Aut_{\FF_q}\left(\laurent{\FF_q}{x}\right)$.  In the special case where $G \simeq \ZZ_p$ generated by $\sigma \in G$ we write the limit  $\lim_{n\to \infty} \left(i_n(\sigma)/p^n\right) = \left(p/(p-1)\right) e.$ Wintenberger~\cite[Th\'eor\`eme~1]{win04} shows that either $e$ is a positive integer or $e =\infty.$ Moreover, $e$ is a positive integer if and only if  $L/K$ corresponding to $(X, G)$ is an extension of $p$-adic field. In fact, $e$ is the absolute ramification index of $K.$ The main goal of this section is to generalize Wintenberger's result to the general case where the $\ZZ_p$-rank of $G$ is an arbitrary positive integer.

\subsection{A necessary condition}
\label{subsec:necessary condition}

For  Lie extensions $L/K$ of $p$-adic local fields, a key ingredient for studying the extension $L/K$  is the following result  due to S.~Sen which gives the effect of taking $p$-th power on the upper ramification subgroups of the Galois group $G_{L/K}$.

\begin{proposition}[S.~Sen~\mbox{\cite[Proposition~4.5]{sen72}}]
\label{prop:p-adic Lie ext}
Let $K$ be a $p$-adic local field with absolute ramification index $e.$ Let $L/K$ be a $p$-adic Lie extension of $K$. Then for real number $t$ large enough, $G_{L/K}(t)^p = G_{L/K}(t + e)$.
\end{proposition}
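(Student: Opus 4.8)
The plan is to follow Sen~\cite{sen72}. Since every extension $L/K$ to which this proposition is applied in the present paper is abelian, I will give the argument in that case in full, using only local class field theory and the structure of the principal unit filtration, and then indicate what changes in general.

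The first ingredient is a local fact about units. Write $v_K$ for the normalized valuation of $K$, $\gm_K$ for its maximal ideal and $U_K^{(m)}=1+\gm_K^m$, so that $v_K(p)=e$. I would first record that for every integer $m>e/(p-1)$ one has
\[
\bigl(U_K^{(m)}\bigr)^p = U_K^{(m+e)} .
\]
Indeed, for such $m$ the $p$-adic logarithm and exponential are mutually inverse isomorphisms of $\ZZ_p$-modules between $U_K^{(m)}$ and $\gm_K^m$; moreover $\log\bigl((1+x)^p\bigr)=p\log(1+x)$, one has $v_K\bigl(p\log(1+x)\bigr)=e+v_K(x)$ once $v_K(x)\ge m$, and multiplication by $p$ is an isomorphism $\gm_K^m\xrightarrow{\sim}\gm_K^{m+e}$; the displayed equality follows.

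The second ingredient transports this through reciprocity. Let $\{K_n\}$ be the finite subextensions of $L/K$ and let $\theta_{L/K}=\varprojlim_n\theta_{K_n/K}\colon K^\times\to\varprojlim_n\Gal(K_n/K)=G_{L/K}$ be the inverse limit of the norm residue maps. By local class field theory $\theta_{K_n/K}$ carries $U_K^{(m)}$ onto $\Gal(K_n/K)(m)$ for every integer $m\ge 1$, with the standard normalization of the upper numbering; since the upper numbering is compatible with passage to quotients and $U_K^{(m)}$ is compact, taking the limit gives $\theta_{L/K}\bigl(U_K^{(m)}\bigr)=G_{L/K}(m)$, and hence $\theta_{L/K}\bigl(U_K^{(\lceil t\rceil)}\bigr)=G_{L/K}(t)$ for every real $t>0$ (the upper breaks being integers by the previous identity). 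Therefore, whenever $\lceil t\rceil>e/(p-1)$, using that $\theta_{L/K}$ is a group homomorphism, the unit identity above, and $e\in\ZZ$,
\[
G_{L/K}(t)^p=\theta_{L/K}\bigl((U_K^{(\lceil t\rceil)})^p\bigr)=\theta_{L/K}\bigl(U_K^{(\lceil t\rceil+e)}\bigr)=G_{L/K}(\lceil t\rceil+e)=G_{L/K}(t+e),
\]
which is the assertion.

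In this abelian argument the only delicate point is bookkeeping: one must reconcile the normalization of the upper numbering used throughout the present paper with the one in which the reciprocity law is usually stated, since the Hasse--Herbrand conventions in the literature differ by shifts; once that is pinned down the displayed chain of equalities is forced and there is no further difficulty. For a general, possibly non-abelian, $p$-adic Lie extension reciprocity is unavailable, and this is where the real work lies: following Sen, one would reduce to procyclic subquotients and control the different $\gd_{K_{n+1}/K_n}$ of the successive layers of a tower exhausting $L$ directly. That estimate for the different, rather than anything in the abelian reduction, is the main obstacle.
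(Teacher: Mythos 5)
The paper does not prove this proposition at all: it is quoted verbatim from Sen~\cite[Proposition~4.5]{sen72} and used as a black box, so there is no internal argument to compare yours against. Your abelian argument is a legitimate and essentially complete alternative for every use the paper makes of the statement (all extensions to which it is applied here are abelian, indeed totally ramified). The two ingredients are both correct: $(U_K^{(m)})^p=U_K^{(m+e)}$ for $m>e/(p-1)$ via $\log$/$\exp$, and $\theta_{K_n/K}(U_K^{(m)})=\Gal(K_n/K)(m)$ together with compatibility of the upper numbering with quotients, compactness of $U_K^{(m)}$ to pass to the inverse limit, and Hasse--Arf to replace $t$ by $\lceil t\rceil$. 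Two small points you should make explicit if this were to be written out: first, the identification $\theta_{L/K}(U_K^{(m)})=G_{L/K}(m)$ in the normalization used in this paper presupposes that $L/K$ is totally ramified (otherwise one must quotient by the inertia subgroup first; harmless here since $G_{L/K}(t)$ lies in wild inertia for $t>0$ and the paper's extensions are totally ramified); second, $G_{L/K}(t)^p$ must be read as the subgroup of $p$-th powers, which coincides with the image of $(U_K^{(m)})^p$ precisely because everything is an abelian pro-$p$ group in the relevant range. What your route buys is a self-contained proof from class field theory of exactly the case needed; what it gives up is the general (non-abelian) statement, where, as you correctly note, Sen's actual proof proceeds through estimates on the differents of the finite layers and cannot be recovered by reciprocity.
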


We first present a necessary condition for the extension $L/K$ corresponding to $\left(\laurent{\FF_q}{x}, G\right)$ to be an $p$-adic fields.

\begin{theorem}
\label{thm:char0lowernumbergrowth}
Let $G$ be a closed abelian subgroup of $\Aut_{\FF_q}\left(\laurent{\FF_q}{x}\right)$
which is isomorphic to $\ZZ_p^d$ with $d \ge 1$. Let
$L/K$ be an abelian extension corresponding to $\left(\laurent{\FF_q}{x}, G\right)$
under the field of norms functor. If $K$ is a $p$-adic field
then there exists a constant $\k$ depending on $G$ such that for every non-identity $\s\in G$ we have
\begin{equation}
\label{eqn:in ratio}
\frac{i_{n+2}(\s) - i_{n+1}(\s)}{i_{n+1}(\s) - i_n(\s)} = p^d
\quad\text{for all $n\ge \k$}.
\end{equation}
\end{theorem}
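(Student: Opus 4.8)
The plan is to translate the hypothesis "$K$ is a $p$-adic field" into a statement about the upper ramification filtration of $G$ via the field of norms dictionary, and then read off the growth of the lower-numbering breaks of the cyclic subgroups $\langle\s\rangle$. First I would invoke Theorem~\ref{thm:win80} and Proposition~\ref{prop:matching lower-number}: the pair $\left(\laurent{\FF_q}{x},G\right)$ corresponds to $L/K$, and the lower-numbering filtration of $G$ matches that of $G_{L/K}$, so it suffices to analyze $G_{L/K}$. Since $K$ is a $p$-adic field with some absolute ramification index $e$, Proposition~\ref{prop:p-adic Lie ext} (Sen) applies: there is a real number $t_0$ such that $G_{L/K}(t)^p = G_{L/K}(t+e)$ for all $t \ge t_0$. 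Because $G_{L/K}\simeq \ZZ_p^d$, for $t\ge t_0$ the group $G_{L/K}(t)$ is a closed subgroup of $\ZZ_p^d$, hence itself isomorphic to $\ZZ_p^{d'}$ for some $d'\le d$; but $[G_{L/K}(t):G_{L/K}(t)^p]=p^{d'}$ while $G_{L/K}(t)^p=G_{L/K}(t+e)$ has finite index in $G_{L/K}(t)$, and iterating shows $[G_{L/K}(t):G_{L/K}(t+ne)]=p^{nd'}$. A separate argument that $d'=d$ for $t$ large is needed: since $G\simeq\ZZ_p^d$ acts faithfully and each $G(t)$ is open (the extension is APF), for $t$ beyond the last break where the $\ZZ_p$-rank could drop we have $G(t)\cong\ZZ_p^d$ of finite index in $G$; equivalently, $G(t)$ open in $\ZZ_p^d$ forces $G(t)\cong\ZZ_p^d$ for all $t$, so in fact $d'=d$ throughout. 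Thus for $t\ge t_0$, $G(t)^p=G(t+e)$ with index exactly $p^d$.

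Next I would extract the breaks. Let $\{u_m\}$ be the upper-numbering breaks of $G$. Sen's relation $G(t)^p=G(t+e)$ for $t\ge t_0$ says the filtration is "$e$-periodic up to raising to the $p$-th power": once we pass $t_0$, the upper breaks form an arithmetic progression with common difference $e$, i.e.\ $u_{m+1}-u_m=e$ for all large $m$. (Concretely: if $u$ is a break $\ge t_0$, then $G(u)/G(u)^p = G(u)/G(u+e)$ is nontrivial of order $p^d$, so $u+e$ is again a break, and there are no breaks strictly between $u$ and $u+e$ since $G(u)/G(u+e)$ has order exactly $p^d=[G(u):G(u)^p]$ and no proper intermediate subgroup of $\ZZ_p^d$ containing $p\ZZ_p^d$... — this last point requires that $G(t)$ is constant on $(u,u+e)$, which follows because any break there would give a proper subgroup strictly between $G(u)^p$ and $G(u)$ of index a power of $p$ less than $p^d$, contradicting that every subgroup of $\ZZ_p^d$ containing $p\,\ZZ_p^d$ and of index $<p^d$ would have to be $\ZZ_p^d$ itself only if... ) — so I must be careful here: I would instead argue directly that between consecutive "Sen breaks" $u$ and $u+e$ the factor $G(u)/G(u+e)\cong(\ZZ/p)^d$ could a priori break into several jumps. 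The correct statement I would aim for, following Wintenberger~\cite{win04} in the rank-one case, is that for $t$ large the Hasse--Herbrand function $\psi_G$ becomes, up to an additive constant, the standard one for $\ZZ_p^d$ with ramification $e$, so that consecutive \emph{lower}-numbering breaks $l_{m+1}, l_m$ of $G$ satisfy $l_{m+1}-l_m = e\cdot(G:G[l_m])$ eventually — i.e.\ the successive differences of lower breaks get multiplied by $p^d$ at each step, because the index $(G:G[l])$ multiplies by $p^d$ each time we cross $e$ in the upper numbering.

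Finally I would specialize to a single nontrivial $\s\in G$. By Remark~\ref{lem:subgroupbreaks}, the sequence $\{i_n(\s)\}$ is exactly the set of lower-numbering breaks of $H=\langle\s\rangle\cong\ZZ_p$, and by Proposition~\ref{proposition:lower-upper-numbering} these are breaks of $G$ as well, namely $H[s]=H\cap G[s]$. The key computation is: $\psi_{L/K}$ restricted along $H$ relates $i_n(\s)$ to the $n$-th upper break of $H$; since upper ramification is compatible with passing to quotients/subgroups in the appropriate sense and $G(t)^p=G(t+e)$ holds uniformly, the upper breaks of $H$ are (for $n$ large, past a constant $\k$ depending only on $G$ via $t_0$) in arithmetic progression with difference $e$, and then $i_{n+1}(\s)-i_n(\s) = e\cdot [H:H[i_n(\s)]]\cdot(\text{local index factor})$; tracking that $[H:H[i_{n+1}(\s)]] = p^d\cdot[H:H[i_n(\s)]]$ for $n\ge\k$ yields exactly $(i_{n+2}-i_{n+1})/(i_{n+1}-i_n)=p^d$. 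The constant $\k$ is uniform over $\s$ because it comes from $t_0$ (the threshold in Sen's proposition) and the fixed structure of $G$, not from $\s$.

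\medskip
\noindent\emph{Main obstacle.} The delicate point is showing that, beyond a uniform threshold, the upper-numbering jumps of $G$ are \emph{single} jumps spaced exactly $e$ apart with index jump $p^d$ at each — in other words, ruling out that the quotient $G(u)/G(u+e)\cong(\ZZ/p\ZZ)^d$ contributes several distinct breaks of smaller index in the interval $(u,u+e]$, and controlling this \emph{uniformly in} $\s$ so that a single $\k$ works for all $\s$. Wintenberger's rank-one theorem gives the template (there the quotient is just $\ZZ/p$, so the issue is invisible), and the resolution should come from the fact that $G$ is \emph{abelian} of rank exactly $d$: the filtration $G(t)$ for large $t$ is a filtration of $\ZZ_p^d$ by open subgroups with successive quotients killed by $p$, and Sen's relation forces it to coincide eventually with the "diagonal" filtration $p^{\lceil (t-c)/e\rceil}\ZZ_p^d$ up to a shift, which has the single-jump property. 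Making that coincidence precise and uniform is where the real work lies.
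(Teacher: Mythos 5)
There is a genuine gap, and you have correctly located it yourself: your argument hinges on showing that, beyond a uniform threshold, the upper-numbering filtration of $G$ has single jumps spaced exactly $e$ apart with index jump $p^d$ at each, and you do not establish this. The paper's proof never needs that statement (which, as you suspect, is problematic: a priori $G(u)/G(u+e)\cong(\ZZ/p\ZZ)^d$ may well split into several breaks inside $(u,u+e]$, and nothing rules this out). Instead it works with one fixed non-identity $\s$ at a time and proves the single identity $\om_{n+1}=\om_n+e$, where $\om_n=\p_G(i_n(\s))$, so that $G(\om_n)=G[i_n(\s)]$. The inequality $\om_{n+1}\ge\om_n+e$ follows from Sen's relation as you indicate; the reverse inequality $\om_{n+1}\le\om_n+e$ is the step missing from your outline, and it uses that $G\simeq\ZZ_p^d$ is torsion-free: from $\s^{p^{n+1}}\in G(\om_{n+1})=G(\om_{n+1}-e)^p$ and uniqueness of $p$-th roots one gets $\s^{p^n}\in G(\om_{n+1}-e)$, hence $i_n(\s)\ge\ps_G(\om_{n+1}-e)$ and $\om_n\ge\om_{n+1}-e$.

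With $\om_{n+1}=\om_n+e$ in hand, the conclusion is a change of variables under the integral defining $\ps_G$, which makes the fine break structure inside $(\om_n,\om_{n+1})$ irrelevant:
\begin{equation*}
i_{n+2}(\s)-i_{n+1}(\s)=\int_{\om_{n+1}}^{\om_{n+2}}\left(G:G(t)\right)dt
=\int_{\om_n}^{\om_{n+1}}\left(G:G(t+e)\right)dt
=\int_{\om_n}^{\om_{n+1}}\left(G:G(t)\right)\left(G(t):G(t)^p\right)dt,
\end{equation*}
and $\left(G(t):G(t)^p\right)=p^d$ pointwise because $G(t)$ is open in $G\simeq\ZZ_p^d$ (a fact you did establish). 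This gives $i_{n+2}-i_{n+1}=p^d\left(i_{n+1}-i_n\right)$ directly. Your closing heuristic $i_{n+1}(\s)-i_n(\s)=e\cdot[H:H[i_n(\s)]]\cdot(\text{local index factor})$ is not a proof and mixes indices in $H=\langle\s\rangle$ with indices in $G$; the correct bookkeeping is entirely in terms of $\left(G:G(t)\right)$. The uniformity of $\k$ you do get right: taking $p^{\k}=\left(G:G(Y)\right)$ forces $\s^{p^n}\in G(Y)$ and hence $\om_n\ge Y$ for all $n\ge\k$, independently of $\s$.
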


\begin{proof}
Let $L/K$ be the abelian extension of $p$-adic field corresponding to the pair
$\left(\laurent{\FF_q}{x}, G\right)$ under the field of norms functor.
Let $e = v_K(p)$ be the absolute ramification index of $K.$
Denote  the sequences of breaks of lower and upper ramification numbers of $G$ by $\{l_m\}$ and $\{u_m\}$ respectively. Let  $i_n = i_n(\s)$ be the $n$-th ramification number of non-identity element $\s\in G.$ Our first goal is to show that for all $n$
large enough, $i_n$ satisfy~\eqref{eqn:in ratio}.

By Wintenberger~\cite[Corollary~3.3.4]{win83} and Laubie~\cite{laubie88} (see also Proposition~\ref{prop:matching lower-number}), the upper and lower numberings  are preserved between $G_{L/K}$ and $G$  under the field of norms functor. Therefore, the conclusion of Proposition~\ref{prop:p-adic Lie ext} applies to $G$ as well. That is,  we have $G(y)^p = G(y+e)$
for $y$ sufficiently large. Let $Y$ be a fixed number such that $G(y)^p = G(y+e)$
for all $y\ge Y.$ It  follows from~\cite{sen72} that $G(Y)$ is of
finite index in $G.$ Let $N = \left( G : G(Y)\right)$ which is a power of $p$ since  $G \simeq \ZZ_p^d$.
Write $N = p^{\k}$ for some nonnegative integer $\k.$

Let $\s\in G$ be a non-identity element. We put $i_m = i_m(\s)$ and set $\om_m = \p_G(i_m)$ for all integer $m\ge 0.$
Then, we have  $G(\om_m) = G(\p_G(i_m)) = G[i_m]$. Let  $n \ge \k$ be given. First we observe that $\om_n \ge  Y$. Indeed, since $n\ge \k$  we must have $\s^{p^n}\in G(Y)=G[\ps_G(Y)]$. By the definition of the $n$-th ramification number of $\s$ it follows that $i_n \ge \ps_G(Y).$ Applying $\p_G$, we get $\om_n = \p_G(i_n) \ge \p_G(\ps_G(Y)) = Y$ as desired.

Now Proposition~\ref{prop:p-adic Lie ext} says that $G(\om_n)^p = G(\om_n + e).$
Use the facts that $\s^{p^{n+1}} \in G[i_n]^p = G(\om_n)^p=G(\om_n+e)$ and $G(\om_n+e) = G[\ps_G(\om_n+e)]$,
we see that $i_{n+1}\ge \ps_G(\om_n+e).$ By applying the increasing function $\p_G$,   we conclude that $\om_{n+1} \ge \om_n +e.$ We claim that $\om_{n+1} = \om_n + e.$   It remains to show that $\om_{n+1} \le \om_n + e.$

By Proposition~\ref{prop:p-adic Lie ext} again, we have
 $G(\om_{n+1}) = G(\om_{n+1} - e + e)= G(\om_{n+1} - e)^p$ since $\om_{n+1} - e \ge \om_n \ge Y$. Therefore, $\s^{p^{n+1}}\in G[i_{n+1}] =  G(\om_{n+1}) =  G(\om_{n+1} - e)^p.$ On the other hand,  $G$ does not have nontrivial $p$-torsion elements, we thus conclude  that $\s^{p^n}$ is the unique element whose $p$-th power
is $\s^{p^{n+1}}.$ Hence  $\s^{p^n} \in G(\om_{n+1} - e)=G[\psi_G(\om_{n+1}-e)].$ Therefore, $i_n \ge \psi_G(\om_{n+1}-e)$. It follows that $\om_n = \p_G(i_n) \ge (\p_G\circ\ps_G)(\om_{n+1}-e) = \om_{n+1} - e$ which is the desired inequality.
 Hence we must have $ \om_{n+1} = \om_n + e $ for $n\ge \k.$  

By the definition of Hasse-Herbrand function for $G$, we have
\begin{equation*}
i_{n+1} - i_n = \psi_{G}(\om_{n+1}) - \psi_{G}(\om_n) = \int_{\om_n}^{\om_{n+1}} \, \left(G : G(t)\right) dt.
\end{equation*}
Therefore,
\begin{align*}
i_{n+2} - i_{n+1} & = \int_{\om_{n+1}}^{\om_{n+2}} \, \left(G : G(t)\right) dt \\
                   & = \int_{\om_n+e}^{\om_{n+1}+e} \, \left(G : G(t)\right) dt \\
                   & = \int_{\om_n}^{\om_{n+1}} \, \left(G : G(t+e)\right) dt  \\
                   & = \int_{\om_n}^{\om_{n+1}} \, \left(G : G(t)^p\right) dt \\
                   & = \int_{\om_n}^{\om_{n+1}} \, \left(G :
                     G(t)\right) \left(G(t) : G(t)^p\right) dt \\
                   & = \int_{\om_n}^{\om_{n+1}} \, p^d \left(G : G(t)\right) dt \\
                   & = p^d \left(i_{n+1} - i_n\right) \quad \text{for all $n \ge \k.$}
\end{align*}
That is, the sequence $\{i_n\}$ satisfies the  relation
$$
\frac{i_{n+2} - i_{n+1}}{i_{n+1} - i_n} = p^d
$$
for all $n \ge \k$ as desired.
\end{proof}

\begin{remark}
\label{rmk:in closed form}
It's not hard to see that~\eqref{eqn:in ratio} is equivalent to the following closed form  of $i_n(\s)$  for $\s\in G.$
\begin{equation}
\label{eq:growthofin}
i_n(\sigma)=i_{\k}(\sigma)+\frac{p^{d(n-\k)}-1}{p^d-1}
(i_{\k+1}(\sigma)-i_{\k}(\sigma))\quad\text{for all $n\ge \k$}.
\end{equation}
\end{remark}

\subsection{A sufficient condition}
\label{subsec:height}

For a pro-$p$ group $G$, when we say the sequence $\{\t_n\}$ of
elements of $G$ converges
to $\t\in G$, we mean the convergence is with respect to the pro-$p$
topology on $G.$
We'll denote by $\plim \t_n = \t$. To ease the notation, we use
$G_n = G^{p^n}$ to denote the subgroup of $p^n$-th power of $G$ for
positive integer $n.$ In the following, we'll denote the identity of $G$ by
$1$ if there's no danger of confusion.

\begin{proposition}
\label{prop:continuityofi}
Let $G$ be a ramification subgroup which  is isomorphic to $\ZZ_p^d$ ($d\ge 1)$.
\begin{parts}
\item[\upshape(i)]
Let $\{\t_n\}$ be a sequence of elements in $G$ such that $\plim \t_n = \t$. Then,
\[\left\{\begin{array}{ll}
 \lim_{n\to\infty}i(\tau_n)=\infty , & \hbox{if $\tau =  1$;} \\
i(\tau_n)=i(\tau)\; \mbox{for $n$ sufficiently large,} &
\hbox{otherwise.}\end{array}\right.\]

\item[\upshape(ii)]
For any positive integer $N$, there exists an upper bound $B_N$ such that
$i(\s) \le B_N$ for all $\s\in G\setminus G_N.$

\end{parts}
\end{proposition}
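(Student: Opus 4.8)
The plan is to use the following key facts established above: (1) for a nontrivial $\s\in G$ the set $\{i_n(\s)\}$ is precisely the set of breaks of the lower numbering of the closed subgroup $\<\s\>\simeq\ZZ_p$, and by Remark~\ref{lem:subgroupbreaks} these are among the breaks $\{l_m\}$ of the lower numbering of $G$; (2) by Sen's congruence (and the growth bound~\eqref{eq:in lower bound}) the breaks $i_n(\s)$ grow at least like $\sum_{j\le n}p^j$, and the increments $i_{n+1}(\s)-i_n(\s)$ are divisible by $p^{n+1}$; and (3) $G$ has the descending chain of open subgroups $G_n=G^{p^n}$, with $\bigcap_n G_n=\{1\}$, where $G[s]$ is of finite index in $G$ for every $s$ (so $G$ is an APF/ramification subgroup of the type considered). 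For part (ii): fix $N$ and consider $\s\in G\setminus G_N$. Since $\s\notin G_N=G^{p^N}$, the subgroup $\<\s\>$ is not contained in $G^{p^N}$; equivalently, writing things through the $\ZZ_p^d$-structure, the image of $\s$ in $G/G_N$ is a nonzero element of $(\ZZ/p^N)^d$. The point is that $i(\s)=i_0(\s)$ is the smallest break of $\<\s\>$, hence a break of $G$; I will show that $i(\s)$ is bounded by the largest lower-numbering break $l_m$ of $G$ with the property that $G[l_m]$ still surjects onto a nontrivial subgroup of $G/G_N$ — equivalently, $G[l_m]\not\subseteq G_N$. Because $G_N$ is open, $G[s]\subseteq G_N$ for all $s$ sufficiently large (the lower-numbering filtration is exhaustive with trivial intersection, since $G$ has no nontrivial torsion and $i$ is finite on nontrivial elements); let $B_N$ be the supremum of those $s$ with $G[s]\not\subseteq G_N$, which is finite. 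Then for $\s\in G\setminus G_N$ we have $\s\in G[i(\s)]$ and $\s\notin G_N$, so $G[i(\s)]\not\subseteq G_N$, whence $i(\s)\le B_N$, proving (ii).

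For part (i), first suppose $\t=1$. I claim $\lim_n i(\t_n)=\infty$. Suppose not; then some value $v$ is attained by $i(\t_n)$ for infinitely many $n$, so infinitely many $\t_n$ lie in $G[v]\setminus G[v+1]$, a set that is a union of finitely many cosets of $G[v+1]$ (which is open). Hence infinitely many $\t_n$ lie in a single coset $\gamma G[v+1]$ with $\gamma\notin G[v+1]$; but then $\plim\t_n=\t=1$ would force $1\in\gamma G[v+1]$, i.e. $\gamma\in G[v+1]$, a contradiction. (Equivalently: $\t_n\to 1$ means $\t_n\in G_m$ eventually for each $m$, and by part (ii) applied with $N=m$, once $\t_n\in G_m$ for all $n\ge n_0$ but $\t_n\ne 1$ we get... — actually cleaner: $\t_n\in G_m\setminus\{1\}$ forces $i(\t_n)$ to exceed $B_{m-1}$? no — I will run the coset argument, it is clean.) Now suppose $\t\ne1$, with $v:=i(\t)<\infty$. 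Since $G[v+1]$ is open and $\plim\t_n=\t$, we have $\t_n\t^{-1}\in G[v+1]$ for $n$ large, i.e. $\t_n\in\t\,G[v+1]$. For such $n$: $\t\in G[v]$ gives $\t_n\in G[v]$, so $i(\t_n)\ge v$; and if $i(\t_n)\ge v+1$ then $\t_n\in G[v+1]$, which combined with $\t_n\in\t\,G[v+1]$ gives $\t\in G[v+1]$, contradicting $i(\t)=v$. Hence $i(\t_n)=v=i(\t)$ for all large $n$, as claimed.

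The main obstacle is part (ii): it rests on the assertion that, for each fixed $N$, one has $G[s]\subseteq G_N$ for all sufficiently large $s$ — i.e. that the lower-numbering filtration of $G$ is cofinal with the $p$-power filtration. This is where the hypothesis $G\simeq\ZZ_p^d$ (no nontrivial torsion, and $i(\s)<\infty$ for every $\s\ne1$) is essential, and it is precisely the place where~\eqref{eq:in lower bound} / Sen's theorem enters: for $\s$ of infinite order, $i_n(\s)\to\infty$, so $i(\s)$ large on a cyclic generator pushes it deep into the $p$-power filtration. I would argue: if $\s\in G$ has infinite order and $i(\s)\ge l_m$, then running the inequalities of the filtration (as in the proof of Theorem~\ref{thm:char0lowernumbergrowth}, or directly via Sen) shows $\s\in G_{r(m)}$ for some $r(m)\to\infty$ as $m\to\infty$; the finitely many breaks below any bound then give the finiteness of $B_N$. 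I would double-check that one does not need $\s$ itself to have infinite order in $G$ — since $G\simeq\ZZ_p^d$ every nontrivial $\s$ does have infinite order, so this is automatic and the subtlety disappears. Once this cofinality is in hand, both (i) and (ii) follow from the elementary coset/openness manipulations above.
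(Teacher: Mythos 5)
Your part~(i) is correct and rests on the same input as the paper's argument: from~\eqref{eq:in lower bound} one gets $G_m=G^{p^m}\subseteq G[v]$ once $\sum_{j\le m}p^j\ge v$, so each $G[v]$ is an open subgroup, and then the case $\t\ne 1$ is the ultrametric identity $i(\t_n)=\min\{i(\t),i(\t_n\t^{-1})\}$ in your coset phrasing. That part stands.

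Part~(ii) is where there is a genuine gap, and you have correctly located it yourself: everything hinges on the cofinality claim that for each $N$ one has $G[s]\subseteq G_N$ for all $s$ large. The claim is true, but neither justification you offer establishes it. ``The filtration has trivial intersection'' is not by itself sufficient: a decreasing family of closed subgroups with trivial intersection need not eventually enter a prescribed open subgroup unless some compactness or chain condition is invoked. And the route you propose in your closing paragraph --- Sen's congruence, or the inequalities from the proof of Theorem~\ref{thm:char0lowernumbergrowth} --- points in the wrong direction: Sen's theorem and~\eqref{eq:in lower bound} show that deep $p$-th powers have large ramification number (i.e.\ $G_m\subseteq G[s]$ for suitable $m$), whereas what you need is the converse, that elements with large ramification number are deep $p$-th powers; iterating the lower bounds never yields that. (Moreover Theorem~\ref{thm:char0lowernumbergrowth} assumes the corresponding extension is already $p$-adic, which is exactly what Proposition~\ref{prop:continuityofi} is being used to establish in Theorem~\ref{thm:char0converse}, so that appeal would be circular.) The missing ingredient is compactness of $G$: each $G[s]$ is an open, hence closed, subgroup, every $\s\ne 1$ has $i(\s)<\infty$ because $G$ acts faithfully, so the nested open sets $G\setminus G[s]$ cover the compact set $G\setminus G_N$ and a single $G\setminus G[s_0]$ already does, i.e.\ $G[s_0]\subseteq G_N$. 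This is precisely the compactness the paper uses, in sequential form: it extracts from a putative unbounded sequence a subsequence converging inside a nontrivial coset of $G_N$ and applies part~(i) to the (nontrivial) limit. Once you insert the compactness step, your version of (ii) is a clean alternative that, unlike the paper's, does not even rely on part~(i).
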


\begin{proof}
We first notice that  if $\plim_{n\to\infty} \t_n = \t$ then for any given integer $m$, there exists
a positive integer $N$ such that for all $n\ge N$ we have $\t_n \t^{-1} \in G_m.$
This is because the family of subgroups $\{G_m\}$ forms a fundamental system
of open neighborhood of the identity element of $G.$

To prove (i), let $\t_n$ be a given sequence which has a limit $\t$ in $G$. Let
$m$ be a positive integer. As remarked above, there exits
an integer $N = N(m)$ such that $\t_n \t^{-1} \in G_m$ for all $n\ge N.$
Fix a set of generators $\{\s_1,\ldots, \s_d\}$ for $G$. We see that
$i(\t_n\t^{-1}) \ge \min \{i_m(\s_1),\ldots, i_m(\s_d)\}$. By~\eqref{eq:in lower bound}, the number
$\min \{i_m(\s_1),\ldots, i_m(\s_d)\}$ is unbounded as $m\to \infty.$ Consequently
$i(\tau_n\tau^{-1})\rightarrow\infty$ as $n\to \infty.$

Write  $i(\t_n) = i(\t_n \t^{-1} \t) .$ We first consider the case where  $\t = 1.$
Then for any integer $m$ there is an $N$ such that $\t_n = \t_n \t^{-1}\in G_m$ for all $n > N.$
Hence $i(\t_n) = i(\t_n\t^{-1}) \to \infty.$ Next, we assume that $\t\ne 1.$
Because $\lim_{n\to \infty} i(\t_n\t^{-1}) = \infty$, there exists an integer $T$
such that $i(\t_n\t^{-1}) > i(\t)$ provided that $n> T$. It follows that
$i(\t_n) = \min\{i(\t), i(\t_n\t^{-1})\} = i(\t)$ for all $n > T$  as desired.

For (ii), we fix a positive integer $N.$ Notice that $G/G_N$ is a finite group.
Let $\{\l_1,\ldots, \l_r\}$ be a fixed coset representatives of $ G_N$ in $G.$
Suppose that there exists a sequence $\{\tau_n\}$ such that  $\tau_n\not\in G_N$ for all $n$ and
$i(\tau_n)\rightarrow\infty$ as $n\to \infty.$ Then there exists an infinite subsequence of
$\{\tau_n\}$ in the coset $\lambda_iG_N$ with $\lambda_i\ne 1$ for some $i\in\{1,\dots,r\}.$.
Since $\lambda_iG_N$ is compact (in the pro-$p$ topology),
this subsequence has a limit point $\tau$ in $\lambda_iG_N$. Hence, without lose
of generality we may assume that $\plim_{n\to\infty} \tau_n=\tau$. Since $\tau\ne 1$,
it follows from (i) that $i(\tau_n)=i(\tau)$ for $n$ sufficiently large.  This contradicts to
the assumption that $i(\tau_n)\rightarrow\infty$. Hence, the function $i(\cdot)$ is bounded above
in $G\setminus G_N$ by a constant depending on $G_N$ only.

\end{proof}

It is natural to ask whether or not the relation~\eqref{eqn:in ratio} in Theorem~\ref{thm:char0lowernumbergrowth}(or ~\eqref{eq:growthofin}  in  Remark~\ref{rmk:in closed form})
is also sufficient to guaranteed that the extension $L/K$ corresponding to $\left(\laurent{\FF_q}{x}, G\right)$ is an
extension $p$-adic field. The following result which is the converse to Theorem~\ref{thm:char0lowernumbergrowth}
gives an affirmative answer.

\begin{theorem}
\label{thm:char0converse}
Let $G\subseteq\Aut_{\FF_q}(\laurent{\FF_q}{x})$ be an abelian group which is isomorphic
to $\ZZ_p^d$. Suppose that there exists a $\k\ge 1$ such that for all $n\ge \k$,
 the $n$-th ramification number $i_n(\s)$  satisfies~\eqref{eqn:in ratio}
for every  element non-identity element $\s\in G$,  that is
\begin{equation*}
\frac{i_{n+2}(\s) - i_{n+1}(\s)}{i_{n+1}(\s) - i_n(\s)} = p^d.
\end{equation*}
 Then, the field extension corresponding to $\left(\laurent{\FF_q}{x}, G\right)$ under the field of norms functor is an extension of $p$-adic field.
\end{theorem}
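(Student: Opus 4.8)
The plan is to establish the converse by translating the ramification-growth hypothesis into the statement that the upper-numbering filtration of $G$ is eventually periodic with period equal to a fixed integer $e \geq 1$, and then to invoke Theorem~\ref{thm:win80} to manufacture a $p$-adic field $K$. Concretely, if $L/K$ is the extension corresponding to $\left(\laurent{\FF_q}{x}, G\right)$ and $K$ has characteristic zero, then by Theorem~\ref{thm:char0lowernumbergrowth} the relation~\eqref{eqn:in ratio} holds with $p^d$ on the right, which is consistent with what we are assuming; so the real content is to rule out characteristic $p$. I would argue by contradiction: suppose $K$ has characteristic $p$, so that $\laurent{\FF_q}{x} = X_K(L)$ would force $L/K$ to already be an extension of local fields of characteristic $p$, i.e. $K \cong \laurent{\FF_{q'}}{t}$ for some finite extension $\FF_{q'}/\FF_p$ contained in the residue field, and $G \cong G_{L/K}$.

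First I would extract from the hypothesis, via Remark~\ref{rmk:in closed form}, the closed form~\eqref{eq:growthofin} for every non-identity $\s \in G$, and in particular the key consequence that $\s^{p^n} \in G[i_n(\s)]$ while $\s^{p^n} \notin G[i_n(\s)+1]$ — so the breaks of the lower numbering of the cyclic group $\overline{\langle\s\rangle}$ are exactly the $i_n(\s)$ (Remark~\ref{lem:subgroupbreaks}), and these sit inside the breaks $\{l_m\}$ of $G$. The next step is to pass to upper numbering. Using $\om_n(\s) := \p_G(i_n(\s))$ and the Hasse–Herbrand formalism from \S\ref{subsec:hasse-herbrand}, the identity
\begin{equation*}
i_{n+1}(\s) - i_n(\s) = \int_{\om_n(\s)}^{\om_{n+1}(\s)} \bigl(G : G(t)\bigr)\, dt
\end{equation*}
together with~\eqref{eq:growthofin} should be leveraged to show that the upper-numbering breaks of $G$ are eventually in arithmetic progression with a common gap $e$, and that $G(y)^p = G(y+e)$ for all sufficiently large $y$ — this is the exact analogue of Proposition~\ref{prop:p-adic Lie ext}, run in reverse. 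The arithmetic here is essentially the reversal of the computation at the end of the proof of Theorem~\ref{thm:char0lowernumbergrowth}, combined with the fact that $(G(t) : G(t)^p) = p^d$ for $t$ past the last break where $G(t)$ has finite index, because $G \cong \ZZ_p^d$ is torsion-free of rank $d$.

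Then I would derive a contradiction with characteristic $p$ by exploiting Sen's congruence $i_n(\s) \equiv i_{n-1}(\s) \pmod{p^n}$ from \S\ref{subsec:hasse-herbrand}: feeding the closed form~\eqref{eq:growthofin} into this congruence pins down $i_{\k+1}(\s) - i_\k(\s)$ modulo high powers of $p$ in a way that, by~\eqref{eq:in lower bound}, forces $e$ (the common upper-numbering gap) to be finite and independent of $\s$; but in characteristic $p$ the periodicity relation $G_{L/K}(y)^p = G_{L/K}(y+e)$ for all large $y$ is known to fail for APF extensions with $G_{L/K} \cong \ZZ_p^d$ — indeed, the results of Sen~\cite{sen69} and Wintenberger~\cite{win04} show that in equal characteristic the ratio $i_n(\s)/p^n$ tends to $\infty$ rather than stabilizing, contradicting the bounded-gap conclusion. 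An alternative and perhaps cleaner endgame: having shown that $G(y)^p = G(y+e)$ holds with a fixed integer $e$, one constructs directly, from $(\laurent{\FF_q}{x}, G) \in \Ob(\Rcal)$ and Theorem~\ref{thm:win80}, the extension $L/K$, and then shows that the periodicity of the upper numbering with integer period $e$ forces $K$ to carry an absolute ramification index equal to $e$ — which is impossible if $K$ has characteristic $p$. I expect the main obstacle to be the passage from the multiplicative/integral relation among the $i_n(\s)$ to the clean statement $G(y)^p = G(y+e)$ uniformly in $y$: one must check that the hypothesis, which a priori only constrains the breaks coming from cyclic subgroups, actually controls \emph{all} the breaks of $G$ and the indices $(G(t):G(t)^p)$ between consecutive breaks, and that the constant $e$ so obtained is a genuine positive integer rather than merely a positive rational or $\infty$. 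This uniformity across all non-identity $\s \in G$, and the torsion-freeness of $G$, are exactly the ingredients that should make it go through.
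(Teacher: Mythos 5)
There is a genuine gap here, and it is precisely the one you flag yourself at the end. Your plan is to upgrade the hypothesis on the cyclic subgroups to the uniform statement $G(y)^p = G(y+e)$ for all large $y$ and then contradict characteristic $p$. But you never produce the integer $e$ or the uniformity: the hypothesis only gives $i_{n+1}(\s)-i_n(\s) = p^{dn}\bigl(i_{\k+1}(\s)-i_{\k}(\s)\bigr)$ with a $\s$-dependent initial gap, and converting this into control of the indices $\bigl(G:G(t)\bigr)$ between \emph{all} consecutive breaks of $G$ (which is what you need to define and pin down $e$) is the entire difficulty, not a checkable afterthought. Worse, your proposed endgame does not close even if that step were granted: the criterion ``$i_n(\s)/p^n \to \infty$ in equal characteristic'' is Wintenberger's $d=1$ statement, and under your own hypothesis with $d\ge 2$ one has $i_n(\s)\sim C_\s\, p^{dn}$, so $i_n(\s)/p^n\to\infty$ regardless of the characteristic of $K$ and no contradiction results. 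The correct normalization for general $d$ involves the full index $\bigl(G:G[x]\bigr)$, not the single ramification sequence of one $\s$.

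The paper avoids all of this by invoking a different criterion (Proposition~\ref{prop:criterion of zero char}, a corollary of Wintenberger): $K$ has characteristic $0$ if and only if $x/\bigl(G:G[x]\bigr)$ has finite, positive $\liminf$ and $\limsup$ as $x\to\infty$. One then works entirely in the lower numbering: for each $x$, the elementary divisor theorem for the $\ZZ_p$-submodule $G[x]\subseteq G\simeq\ZZ_p^d$ gives a basis $\{\s_1,\dots,\s_d\}$ of $G$ with $G[x]$ generated by $\{\s_j^{p^{n_j}}\}$, so $\bigl(G:G[x]\bigr)=p^{\sum_j n_j}$ and $i_{n_j-1}(\s_j)<x\le i_{n_j}(\s_j)$. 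The closed form $i_n(\s_j)=\a_j p^{dn}+\b_j$ from~\eqref{eq:growthofinconverse} then sandwiches $x/p^{\sum_j n_j}$ between products of the $\a_j,\b_j$, and Proposition~\ref{prop:continuityofi} bounds $i_0(\s_j), i_1(\s_j)$ (hence $\a_j,\b_j$) uniformly because each $\s_j\notin G^p$. This yields the required two-sided bound on $x/\bigl(G:G[x]\bigr)$ with no need to establish periodicity of the upper numbering at all. If you want to salvage your approach, you would essentially have to prove this index estimate anyway, at which point Proposition~\ref{prop:criterion of zero char} finishes the argument directly.
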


\begin{remark}
In~\cite{laubie98, laubie00}, the authors studied the growth of the $n$-th lower numberings of a set of generators for $G$ under special restrictions on the initial ramification numbers of the generators. With the given conditions, the corresponding extension $L/K$ under the field of norms functor is shown to be extension of $p$-adic field. Also, the growth of the $n$-th lower numberings (for generators) as in~\eqref{eq:growthofin} are  obtained. However, the methods used in their  paper do not seem applicable to the general situation.
\end{remark}

\begin{remark}
It is reasonable to ask in the general case  whether or not~\eqref{eqn:in ratio} satisfied by a set of generators for $G$ is sufficient for the corresponding extensions to be $p$-adic field. We do not have an answer to this question in general.

\end{remark}

The proof of Theorem~\ref{thm:char0converse} is based on the following proposition which is a corollary
to the theorem of Wintenberger~\cite[th\'eor\`em~3.1]{win80} giving criteria for the corresponding extensions to be of characteristic 0.

\begin{proposition}
 \label{prop:criterion of zero char}
Let $G$ be the group corresponding to the Galois group of the
arithmetic profinite extension $L/K$ under the field of norm
functor. Then the following conditions are equivalent.
\begin{parts}
  \item[(i)]  the characteristic of $K$ is 0; \\
  \item[(ii)]  $0 < \liminf_{x\to \infty} \frac{x}{(G : G[x])} < \infty$ and
  $0 < \limsup_{x\to \infty} \frac{x}{(G : G[x])} < \infty$.
\end{parts}

\end{proposition}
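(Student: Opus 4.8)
The plan is to read off the equivalence from Wintenberger's criterion for characteristic zero \cite[th\'eor\`eme~3.1]{win80}, after rewriting the quotient $x/(G:G[x])$ in terms of the Hasse--Herbrand functions $\p_G$ and $\ps_G$ of $G$. Off the (discrete) set of breaks one has $\p_G'(x)=1/(G:G[x])$ and $\ps_G'(y)=(G:G(y))$, and since $\p_G,\ps_G$ are mutually inverse with $G[\ps_G(y)]=G(y)$, the substitution $x=\ps_G(y)$, $y=\p_G(x)$ gives the exact pointwise identity
\begin{equation*}
\frac{x}{(G:G[x])}=\frac{\ps_G(y)}{\ps_G'(y)},\qquad y=\p_G(x).
\end{equation*}
Because $\p_G$ is a continuous increasing bijection of $[0,\infty)$ that tends to $\infty$, the two sides have exactly the same $\liminf$ and $\limsup$ as $x,y\to\infty$. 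Thus (ii) is equivalent to the single statement that $\ps_G(y)/\ps_G'(y)$ is bounded above and below by positive constants for $y$ large (note that, since $\liminf\le\limsup$, the four inequalities in (ii) reduce to $0<\liminf$ and $\limsup<\infty$). Wintenberger's th\'eor\`eme~3.1 characterises $\mathrm{char}\,K=0$ precisely by this boundedness, so the equivalence follows; the remaining paragraphs verify the substantive direction directly and isolate where the cited criterion is indispensable.

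For $\mathrm{char}\,K=0$ the field $K$ is $p$-adic with finite absolute ramification index $e$, and (as throughout this section) $G\simeq\ZZ_p^d$. Transporting Sen's Proposition~\ref{prop:p-adic Lie ext} to $G$ through the field of norms functor is legitimate because the upper numbering is preserved (Wintenberger~\cite[Corollary~3.3.4]{win83} and Laubie, cf.\ Proposition~\ref{prop:matching lower-number}), exactly as in the proof of Theorem~\ref{thm:char0lowernumbergrowth}; this yields $G(y)^p=G(y+e)$ for all $y\ge Y$. As $G(y)$ is open in $G\simeq\ZZ_p^d$ we have $(G(y):G(y)^p)=p^d$, whence the functional equation $\ps_G'(y+e)=p^d\,\ps_G'(y)$ for $y\ge Y$. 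Writing $\int_Y^{y}\ps_G'(t)\,dt$ as a sum of blocks of length $e$ and using this relation to collapse them into a geometric series of ratio $p^{-d}$ read downward from $y$, one bounds the sum both above and below by positive constant multiples of the top block, hence by positive constant multiples of $\ps_G'(y)$. Since $\ps_G'(y)\to\infty$, this shows $\ps_G(y)/\ps_G'(y)$ lies between two positive constants for $y$ large, giving (ii) for every $d\ge1$.

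The reverse implication is the delicate one, and it is here that \cite[th\'eor\`eme~3.1]{win80} is genuinely needed: when $\mathrm{char}\,K=p$ there is no finite absolute ramification index and Sen's Proposition~\ref{prop:p-adic Lie ext} has no analogue, so the functional equation above fails and one expects $\ps_G(y)/\ps_G'(y)\to\infty$, violating the finiteness half of (ii). This is transparent in rank one: taking $H=\overline{\langle\s\rangle}$ with $H[s]=H\cap G[s]$ (Proposition~\ref{proposition:lower-upper-numbering}) and invoking Wintenberger's dichotomy \cite[Th\'eor\`eme~1]{win04}, characteristic $p$ forces $i_n(\s)/p^n\to\infty$, and through the bridge identity this drives the ratio to infinity. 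For general $d$ there is no such clean rank-one reduction of $(G:G[x])$, and one cannot recover the failure of boundedness from the individual sequences $\{i_n(\s)\}$ alone; supplying the unbounded growth of $\ps_G/\ps_G'$ in equal characteristic for arbitrary $d$ is exactly the content one imports from \cite[th\'eor\`eme~3.1]{win80}, which therefore closes the argument.
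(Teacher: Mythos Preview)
The paper does not actually include a proof of this proposition: it is simply stated as a corollary of Wintenberger's \cite[th\'eor\`eme~3.1]{win80}, with no further argument. Your first paragraph does exactly this --- reformulate $x/(G:G[x])$ as $\ps_G(y)/\ps_G'(y)$ via the mutually inverse Hasse--Herbrand functions and then invoke Wintenberger's criterion --- so on the level of ``proof'' you match the paper.

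Your second and third paragraphs go further than the paper does. The direct verification of (i)$\Rightarrow$(ii) via Sen's Proposition~\ref{prop:p-adic Lie ext}, the functional equation $\ps_G'(y+e)=p^d\,\ps_G'(y)$, and the geometric-series collapse is correct for $G\simeq\ZZ_p^d$ (the only case needed here), and it is essentially the integral form of the computation carried out in the proof of Theorem~\ref{thm:char0lowernumbergrowth}, where the same relation $(G(t):G(t)^p)=p^d$ is used to obtain the ratio $p^d$ between successive differences $i_{n+1}-i_n$. Your discussion of why (ii)$\Rightarrow$(i) genuinely requires the cited theorem, and why a na\"{\i}ve rank-one reduction fails in higher rank, is accurate and helpful commentary, though again not something the paper supplies. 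In short: your proposal is correct, agrees with the paper's approach of deferring to Wintenberger, and adds a self-contained argument for one direction that the paper omits.
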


\begin{proof}[Proof of Theorem~\ref{thm:char0converse}]
Let $L/K$ be the extension corresponding to $\left(\laurent{\FF_q}{x}, G\right)$ under the field of norm functor.
Notice that any subgroup which is of finite index in $G$ corresponds to a
subextension $L/M$ of $L/K$ with $[M : K]$ finite. Therefore, by restricting to subgroups
$G[m]$ for $m$ large enough, we may assume that the constant $\k=0$ in~\eqref{eqn:in ratio}. Then, as indicated in
Remark~\ref{rmk:in closed form}, this is equivalent to assuming that  for
all non-identity elements $\s\in G$ we have
\begin{equation}
\label{eq:growthofinconverse}
i_n(\sigma)=i_{0}(\sigma)+\frac{p^{dn}-1}{p^d-1}
(i_{1}(\sigma)-i_{0}(\sigma))\quad\text{for all $n\ge 0$}.
\end{equation}
 By Proposition~\ref{prop:criterion of zero char} (ii), we need
  to show that the ratio $x/(G : G[x])$ has finite positive
  supremum and infimum as $x\to \infty.$

Let $x$ be a given positive real number. Notice that $G$ is a free
$\ZZ_p$ module of rank $d$ and $G[x]$ as a closed
subgroup of $G$ with respect to the pro-$p$ topology is  a $\ZZ_p$-submodule.
By the theory of  module over PID, for every given real number $x$ there exists
a $\ZZ_p$-basis $\{\s_1,\ldots, \s_d\}$ for $G$ so that
a $\ZZ_p$-basis for $G[x]$  is of the form $\{\s_1^{p^{n_1}},
\ldots, \s_d^{p^{n_d}}\}$ and hence  $\left(G : G[x]\right) = p^{\sum_j n_j}$.
On the other hand, for every $j = 1, \ldots, d$ we have
\begin{equation}
\label{eq:in interval}
 i_{n_j - 1}(\s_j) < x \le i_{n_j}(\s_j)\quad \text{.}
\end{equation}
To ease the notation, for each $j$ we write $i_n(\s_j) = \a_j p^{dn} + \b_j$
where
\begin{align*}
\a_j  & = \frac{i_1(\s_j) - i_0(\s_j)}{p^d - 1}\quad \text{and} \\
 \b_j & = \frac{p^d\cdot i_0(\s_j) - i_1(\s_j)}{p^d-1}.
\end{align*}
Substitute these identities back to~\eqref{eq:in interval} and divide~\eqref{eq:in interval} by $p^{d n_j}$ we get
\[
\a_j p^{-d} + \b_j p^{-dn_j}  <  \frac{x}{p^{dn_j}} \le \a_j + \b_j p^{-d n_j}\quad j=1,\ldots, d
\]
and
\[
   \left[\prod_{j=1}^d \left(\a_j p^{-d} + \b_j p^{-d n_j}\right)\right]^{1/d} <
   \frac{x}{p^{\sum_j n_j}} \le
   \left[\prod_{j=1}^d\left(\a_j + \b_j p^{-d n_j}\right)\right]^{1/d}.
 \]
Since $\s_1,\ldots, \s_d$ generates $G$, we must have $\s_j\not\in
G^p$ for every $j=1,\ldots, d.$ Although  $\a_j$ and $\b_j$ depend on the choice of $\s_j$ for each $j$ and the choice of the basis $\{\s_1,\ldots, \s_d\}$ depends on the given real number $x$,
Proposition~\ref{prop:continuityofi} shows that $i_0(\s_j), i_1(\s_j)$
 and hence $\a_j$ and $\b_j$ are bounded above by a constant which is independent of the given real number $x$ for all
 $j=1,\ldots, d.$ Therefore, we conclude that  there exist positive constants $U$
 and $L$ which are independent of $x$ such that
 \[
0 <   L <    \frac{x}{p^{\sum_j n_j}} <  U .
 \]
 This implies that
 \[
 L < \frac{x}{\left(G : G[x]\right)} < U
   \]
   for all $x$ sufficiently large. Hence,
\[
0< L \le \liminf_{x\to\infty}\frac{x}{\left(G : G[x]\right)} \le
  \limsup_{x\to\infty}\frac{x}{\left(G : G[x]\right)} \le U <\infty.
\]
Now, it follows from Proposition~\ref{prop:criterion of zero
  char} that the field $M$ and hence $K$ are $p$-adic fields.
\end{proof}

\section{Application--Commuting power series}
\label{section:Lubin's Conjecture}

In~\cite[\S6]{Lub}, Lubin made a conjecture concerning families of commuting power series defined over a $p$-adic integer ring. The conjecture suggests that  for an invertible series  to commute with a noninvertible series, there must be a formal group somehow in the background. Although this conjecture is still far from completely understood, there has been some interesting work on  this conjecture, see~\cite{LMS,li, li1, li2, Lub, sarkis10}.
In~\cite{LMS}, Laubie,  Movahhedi and Salinier apply the theory of field of norms to the study of Lubin's conjecture and their idea was later applied by Sarkis in~\cite{sarkis10} to prove a special case of  Lubin's conjecture.
Inspired by the work in~\cite{LMS, sarkis10}, we will apply our results in Section~\ref{section:APF} to the characterization of commuting families of power series proposed by Lubin's conjecture.  First, let's set the following notation which will remain fixed throughout this section.

\begin{notation}
\setlength{\itemsep}{3pt}
\item[$K$]
a finite extension of $\QQ_p$ with degree $d = [K : \QQ_p].$
\item[$f,\, e$]
the residue degree and ramification index of $K$ respectively.
\item[$\Ocal_K$]
the  ring of integers of $K$.
\item[$\Mcal_K$]
the  maximal ideal of $\Ocal_K.$
\item[$U_K$]
the units group of $\Ocal_K.$
\item[$\pi$]
a fixed uniformizer of $\Ocal_K$ such that $\Mcal_K = \pi \Ocal_K$.
\item[$v_K$]
the normalized valuation on $K$ such that $v_K(\pi) = 1.$
\item[$\FF_q$]
 the residue field of $\Ocal_K$ which is a finite field of
$q$ elements with $q = p^f$.
\item[ $\Dcal_0(\Ocal_K)$]
 the set of formal power series over $\Ocal_K$ without constant term.
\item[$\wi(g(x))$]
the Weierstrass degree of the power series $g(x).$
\end{notation}
\medskip

Notice that $\Dcal_0(\Ocal_K)$ is a monoid under the operation of substitution. Following the notation and terminologies in~\cite{Lub}, the composition of two power series is denoted by  $(f\circ g)(x) = f(g(x))$ and  the $n$-th iterates of $g(x)$ is denoted by $g^{\circ n}(x)$. A power series $g\in \Dcal_0(\Ocal_K)$ is called {\em invertible} if it has an inverse under the operation of substitution; otherwise it is called {\em noninvertible}. It's an elementary fact that a power series $g\in \Dcal_0(\Ocal_K)$ is  invertible  if and only if $g'(0)\in U_K$.
We denote the subset of invertible power series  by $\Gcal_0(\Ocal_K).$ Then, $\Gcal_0(\Ocal_K)$ forms a group under the operation of substitutions.
Moreover, $\Gcal_0(\Ocal_K)$ acts on $\Dcal_0(\Ocal_K)$ by conjugation. Let $g\in \Dcal_0(\Ocal_K)$, the stabilizer of $g$  under the action of  $\Gcal_0(\Ocal_K)$ is denoted by
\[
\Stab_{\Ocal_K}(g) := \{u\in \Gcal_0(\Ocal_K)\mid u\circ g \circ u^{-1} = g\}.
\]
In other words, $\Stab_{\Ocal_K}(g)$ is the subset of $\Dcal_0(\Ocal_K)$ consisting of all invertible power series that commute with $g$ under the composition of power series.

\begin{definition}
Let $g\in \Dcal_0(\Ocal_K)$. We say that $g$ is a {\em torsion} (series) if $g$ is of finite order under substitution; we say $g$ is {\em stable} if $g'(0)$ is not 0 nor a root of 1.
\end{definition}

\noindent By~\cite[Corollary~1.1.1]{Lub} we have the following.

\begin{proposition}
\label{prop:multiplier}
Let $g\in \Dcal_0(\Ocal_K)$ and let $\partial_0 : \Stab_{\Ocal_K}(g) \to U_K$  be the map defined by
$\partial_0(u) = u'(0)$. If $g(x)$ is a  stable power series, then $\partial_0$ gives an injective group homomorphism from $\Stab_{\Ocal_K}(g)$ into $U_K.$
\end{proposition}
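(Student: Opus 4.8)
The plan is to reduce the statement to the cited result \cite[Corollary~1.1.1]{Lub} after verifying the two things that are genuinely being asserted here: first, that $\partial_0$ is a \emph{group homomorphism} on the stabilizer, and second, that under the stability hypothesis on $g$ it is \emph{injective}. The first point is formal. If $u,v\in\Stab_{\Ocal_K}(g)$, then by the chain rule $(u\circ v)'(0)=u'(v(0))\,v'(0)=u'(0)\,v'(0)$, since $v(0)=0$; thus $\partial_0(u\circ v)=\partial_0(u)\,\partial_0(v)$. That $\partial_0$ lands in $U_K$ rather than merely $\Ocal_K$ is exactly the characterization of invertibility: $u\in\Gcal_0(\Ocal_K)$ forces $u'(0)\in U_K$. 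So $\partial_0$ is a well-defined homomorphism from $\Stab_{\Ocal_K}(g)$ to $U_K$ with no hypothesis on $g$ at all.

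The content is injectivity. Here I would invoke the structural input from Lubin's work: for a stable series $g$, an invertible series commuting with $g$ is determined by its linear coefficient. Concretely, suppose $u,v\in\Stab_{\Ocal_K}(g)$ with $u'(0)=v'(0)$; set $w=u\circ v^{-1}$. Then $w\in\Stab_{\Ocal_K}(g)$ by the group structure just established, and $w'(0)=u'(0)\,v'(0)^{-1}=1$, so it suffices to show that the only element of $\Stab_{\Ocal_K}(g)$ with multiplier $1$ is the identity series $x$. This is where \cite[Corollary~1.1.1]{Lub} enters: one shows that if $w(x)=x+c_mx^m+\cdots$ with $m\ge 2$ commutes with $g(x)=ax+\cdots$ where $a=g'(0)$ is neither $0$ nor a root of unity, then comparing the coefficient of $x^m$ on both sides of $w\circ g=g\circ w$ gives a relation of the form $a^m c_m=a\,c_m$, i.e. $c_m(a^m-a)=0$; since $a$ is not a root of unity and $a\ne 0$, we have $a^{m-1}\ne 1$, forcing $c_m=0$. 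Iterating up the degree shows $w(x)=x$.

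The one subtlety I would be careful about is that the coefficient comparison must account for all the contributions of the higher-order terms of $g$ to the coefficient of $x^m$ in $g\circ w$ and $w\circ g$; an inductive argument on $m$ (assuming $c_2=\cdots=c_{m-1}=0$ already) makes those extra terms vanish, so that the leading relevant relation really is $c_m(a^m-a)=0$. This induction is the main (and only) obstacle, and it is precisely the computation carried out in \cite{Lub}; since we are permitted to cite that corollary, the proof here reduces to: (i) the chain-rule computation showing $\partial_0$ is a homomorphism into $U_K$, and (ii) the reduction of injectivity to the triviality of the kernel, which is \cite[Corollary~1.1.1]{Lub} applied to the stable series $g$. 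I would write the argument in that order, keeping it short and pointing to \cite{Lub} for the coefficient recursion.
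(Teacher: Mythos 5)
Your proposal is correct and takes essentially the same route as the paper, which states the proposition as an immediate consequence of \cite[Corollary~1.1.1]{Lub} without further argument. Your chain-rule verification that $\partial_0$ is a homomorphism into $U_K$ and your sketch of the coefficient comparison $c_m(a^m-a)=0$ forcing the kernel to be trivial are accurate fillings-in of exactly what that citation supplies.
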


Let $g\in \Dcal_0(\Ocal_K)$ be a stable noninvertible  series and assume that $g$ is an endomorphism of a Lubin-Tate formal group $F(x,y)$ over $\Ocal_K.$ Then $\Stab_{\Ocal_K}(g)$ is the units group of the ring $\End_{\Ocal_K}(F(x,y))$ of endomorphisms of $F(x,y).$ Moreover  the homomorphism $\partial_0$ as in Proposition~\ref{prop:multiplier} is surjective.
It is natural to ask if the converse also holds. More precisely, suppose that $\partial_0(\Stab_{\Ocal_K}(g))= U_K$, must $g$  come from an endomorphism of a suitable Lubin-Tate formal group defined over $\Ocal_K$?

In the case where $K = \QQ_p$ and $g\in\Dcal_0(\ZZ_p)$ is a noninvertible power series,  assuming that $v_p(g'(0)) =1, \wi(g(x)) = p$ and  $\partial_0(\Stab_{\ZZ_p}(g)) = \ZZ_p^{\ast},\,$  Sarkis~\cite{sarkis10}  proves that there exists a formal group $F(x,y)$ over $\ZZ_p$ such that $g$ is an endomorphism of $F$ and $\Stab_{\ZZ_p}(g)$ is the group of automorphisms of $F.$
His result provides a support evidence for Lubin's conjecture in the special case where $K = \ZZ_p$ and the stabilizer group $\Stab_{\ZZ_p}(g)$ of  $g(x)$ is the largest possible.

The main goal in this section is to generalize Sarkis result  to a more general situation. Namely, assuming that $\partial_0(\Stab_{\Ocal_K}(g)) = U_K$ for the given noninvertible power series $g$ we show that indeed $g$ is an endomorphism for a Lubin-Tate formal group over $\Ocal_K$ provided that some conditions are satisfied by $K$ and $g$ (see Theorem~\ref{thm:lubin's conjecture} below).
Before stating our result, we recall the definition   as introduced in~\cite{li-proceedings} for the {\em height}  of  a stable noninvertible  series $g,$
\[\hgt(g)=e\cdot \frac{\log_p(\wi(g(x)))}{v_K(g'(0))}.\]

The motivation for defining the height of a noninvertible power series comes from that of a formal group $F(x,y)\in\series{\Ocal_K}{x,y}$ which is defined to be $\log_p(\wi([p]_F(x))\!)$ where $[p]_F$ is the endomorphism of the formal group $F(x,y)$ with first degree coefficient equal to $p$. It turns out that the height of $F(x,y)$ is equal to $\hgt(f)$  for any nonzero, noninvertible endomorphism $f(x)$ of $F(x,y)$. Thus, if $g(x)$ is a noninvertible endomorphism of a formal group, then $\hgt(g)$ is equal to the height of the formal group.
 Notice that in the case treated by Sarkis,  the power series $g(x)$ has height one.

 The following is the main result in this section.
\begin{theorem}
\label{thm:lubin's conjecture}
Let $K$ be an finite extension over $\QQ_p$  with ramification index $e$ and residue degree $f$. Suppose that $g(x)\in \Dcal_0(\Ocal_K)$ is a stable noninvertible series of $\hgt(g) = ef.$ Furthermore, assume that  $\partial_0\left(\Stab_{\Ocal_K}(g)\right) =  U_K,$  then $g$ is an endomorphism of a Lubin-Tate formal group defined over $\Ocal_K$ if one of the following conditions holds:
\begin{enumerate}
\item  $v(g'(0))=1$ ($e$ can be any positive integer).
\item $e=1$ (i.e. $K$ is an unramified extension over $\QQ_p$) and all the roots of iterates of $g(x)$ are simple.
\end{enumerate}

\end{theorem}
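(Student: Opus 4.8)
The plan is to feed the hypotheses into the field-of-norms machinery of Section~\ref{section:APF}, construct from $g$ a suitable local field of characteristic $p$ together with an abelian $\ZZ_p^f$-action, and then show that the resulting $p$-adic extension forces $g$ to be an endomorphism of a Lubin-Tate group. First I would set up the tower: since $g$ is stable and noninvertible with $g'(0)$ having positive valuation, the iterates $g^{\circ n}$ define a tower of finite extensions of $K$ generated by the roots of $g^{\circ n}(x)=0$, and under the simplicity hypothesis (automatic when $v_K(g'(0))=1$, assumed in case (2)) this tower $L/K$ is separable. The commuting group $\Stab_{\Ocal_K}(g)$ acts on this tower, and the surjectivity $\partial_0(\Stab_{\Ocal_K}(g))=U_K$, together with Proposition~\ref{prop:multiplier}, identifies $\Stab_{\Ocal_K}(g)$ with $U_K\cong \mu_{q-1}\times\ZZ_p^{\,d}$ where $d=[K:\QQ_p]=ef$; the pro-$p$ part $G\cong\ZZ_p^{\,ef}$ is the piece visible to wild ramification. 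The key point to extract here — this is where the hypothesis $\hgt(g)=ef$ and the Weierstrass degree condition $\wi(g(x))=f\cdot v_K(g'(0))$ enter — is that the field of norms $X$ attached to $L/K$ is $\laurent{\FF_q}{x}$ and that $g$, acting on this tower, corresponds under the field of norms functor to a \emph{noninvertible} power series whose effect mirrors multiplication-by-$\pi$ on a would-be formal group; the Weierstrass/height bookkeeping is exactly what makes the residue field come out to $\FF_q$ and the rank of $G$ come out to $ef$.

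Next I would apply the first main result of the paper. The action of $G\cong\ZZ_p^{\,ef}$ on $X=\laurent{\FF_q}{x}$ places us in the situation of Theorem~\ref{thm:char0converse}; but here we already \emph{know} $K$ is a $p$-adic field, so by Theorem~\ref{thm:char0lowernumbergrowth} the ramification numbers satisfy, for every non-identity $\s\in G$,
\[
\frac{i_{n+2}(\s)-i_{n+1}(\s)}{i_{n+1}(\s)-i_n(\s)}=p^{ef}
\qquad\text{for all }n\ge\k .
\]
Equivalently, by Remark~\ref{rmk:in closed form}, $i_n(\s)$ has the closed form \eqref{eq:growthofin} with ratio $p^{ef}$. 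This rigidity of the ramification filtration is the structural input that pins down the extension: a $\ZZ_p^{\,ef}$-extension of a $p$-adic field $K$ whose ramification breaks grow in this precise geometric fashion, and which is generated by roots of iterates of a single series of the prescribed Weierstrass degree, is forced to be (contained in) the extension obtained by adjoining the torsion points of a Lubin-Tate formal group over $\Ocal_K$ with uniformizer parameter matching $g'(0)$. I would make this precise by comparing $L/K$ with the Lubin-Tate tower $K_\pi/K$: both are abelian, both have Galois group $\cong U_K$ via the respective actions, both are APF with matching Hasse-Herbrand functions (forced by the coincidence of all lower ramification numbers via Remark~\ref{lem:subgroupbreaks} and Proposition~\ref{proposition:lower-upper-numbering}), so by the uniqueness in Wintenberger's equivalence (Theorem~\ref{thm:win80}) the pairs $(X,G)$ coincide, hence $L=K_\pi$ over $K$.

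Finally, with $L=K_\pi$ identified as the Lubin-Tate tower and $\Stab_{\Ocal_K}(g)$ identified with $\Aut$ of the Lubin-Tate group $\Gcal$, I would recover the formal group law itself from the $G$-action on $X$: the formal group $\Gcal(x,y)$ is reconstructed (à la the Laubie–Movahhedi–Salinier and Sarkis arguments in \cite{LMS,sarkis10}) as the unique formal $\Ocal_K$-module structure on $X=\laurent{\FF_q}{x}$ compatible with the $G=U_K$-action, and $g$ — which commutes with all of $G$ and has the right multiplier $g'(0)\in\Mcal_K$ and Weierstrass degree — must then be the endomorphism $[g'(0)\cdot(\text{unit})]_{\Gcal}$, in particular an endomorphism of $\Gcal$. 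The two cases are handled by the same argument; case (1) is cleaner because $v_K(g'(0))=1$ makes $g'(0)$ (up to a unit) a uniformizer, so $g$ plays directly the role of $[\pi]$ and the simplicity of the roots of the iterates is automatic, whereas in case (2) the unramifiedness $e=1$ is what keeps the field of norms computation tractable while the simplicity of the roots must be assumed.

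\textbf{Main obstacle.} The hard part will be the middle step: rigorously extracting from the single bald equation \eqref{eqn:in ratio} — which constrains only the \emph{breaks} of the filtration — enough information to identify the \emph{extension} $L/K$ with the Lubin-Tate tower, rather than merely matching numerical invariants. Concretely, one must show that knowing all lower ramification numbers $i_n(\s)$ for all $\s\in G$ determines the pair $(X,G)\in\Ob(\Rcal)$ up to isomorphism (so that Theorem~\ref{thm:win80} can be invoked), and then that the abstract isomorphism $G\cong U_K$ arising from $\partial_0$ is compatible with the $\Ocal_K$-action implicit in the Lubin-Tate structure — i.e. that the identification is $\Ocal_K$-linear and not merely $\ZZ_p$-linear. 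This is precisely the place where the height hypothesis $\hgt(g)=ef$ and the exact Weierstrass degree must be used in full strength, and also where case (2)'s simple-roots hypothesis is genuinely needed to guarantee separability of the tower and hence applicability of the field-of-norms formalism.
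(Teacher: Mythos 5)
Your proposal has two genuine gaps, one at the start and one at the end, and the second is fatal to the argument as written.

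First, the setup runs the field-of-norms machinery in the wrong direction. You build a tower $L/K$ by adjoining roots of the iterates $g^{\circ n}$ and then assert that $\Stab_{\Ocal_K}(g)$ acts on it as a Galois group $\cong U_K$, that it is abelian and APF, and that its field of norms is $\laurent{\FF_q}{x}$. None of this is available a priori: establishing that the tower of preimages of $0$ under $g$ is abelian over $K$ with group $U_K$ is essentially equivalent to knowing that $g$ is a Lubin--Tate endomorphism, which is the conclusion. The paper avoids this circularity by going the other way: it reduces $\GG=\Stab_{\Ocal_K}(g)$ modulo $\Mcal_K$ to get a closed subgroup $G\subset\Aut_{\FF_q}(\laurent{\FF_q}{x})$ (Lemma~\ref{lem:reduction injective}), applies Theorem~\ref{thm:win80} to the pair $\left(\laurent{\FF_q}{x},G\right)$ to obtain an extension $N/M$ whose base field $M$ has \emph{unknown} characteristic, and then must \emph{prove} $\operatorname{char}M=0$. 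For that step you cannot invoke Theorem~\ref{thm:char0lowernumbergrowth} (which presupposes a $p$-adic base); you need the sufficient direction, Theorem~\ref{thm:char0converse}, and the verification of~\eqref{eqn:in ratio} comes from the explicit computation of $i_n(\s)$ via the Weierstrass degrees of iterates, i.e.\ Lemmas~\ref{lem:height} and~\ref{lem:findi_n} (Proposition~\ref{prop:lubin char 0}); one then computes $e_M=e$ (Proposition~\ref{prop:ramification index computation}) and shows $N/M$ is the maximal totally ramified abelian extension (Proposition~\ref{prop:max totally ramf ext}).

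Second, and more seriously, your final step conflates the characteristic-$p$ and characteristic-$0$ pictures. What the field-of-norms comparison actually yields is that the \emph{reduction} $\reduce{g}$ lies in $\End(\Fcal)$ for the reduced Lubin--Tate group $\Fcal=\reduce{F}$ (via the centralizer argument in the division algebra $\End(\Fcal)\otimes\QQ_p$). Concluding that $g$ itself is an endomorphism of a formal group over $\Ocal_K$ is a lifting problem, and it is precisely here that the two cases of the theorem diverge — they are not "handled by the same argument." In case (1) the paper sets $h=g\circ u^{\circ(-1)}$ with $h(x)\equiv x^q\pmod{\Mcal_K}$ and $v_K(h'(0))=1$ and quotes Lubin--Tate theory directly. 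In case (2) one needs the full deformation argument: the spaces $Z_r(\z,\m)$, $Z_r^o(\z,\m)$, $B_r(\z,\m)$, the vanishing $\dim(Z_r^o/B_r)=0$ from~\cite{li2}, the auxiliary Lemma~\ref{lem:same} controlling linear coefficients, and an induction on $r$ producing a convergent sequence of conjugating series $\ps_r$. Your sketch of "reconstructing the unique formal $\Ocal_K$-module structure compatible with the $G$-action" names no mechanism for this lift and would not survive being made precise; the obstacle you flag (identifying the extension from the breaks) is real but is the \emph{easier} of the two missing pieces.
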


\begin{remark}
\label{remark:simple roots}
It is well known that for a noninvertible endomorphism of a formal group, every root of its iterates is simple. On the other hand, there exist ``condensation'' of  endomorphisms of a formal group which are noninvertible power series whose roots of iterates are not always simple (\cite[Examples 1 and 2]{iso}). There is also an example of noninvertible power series with infinite Weierstrass degree  whose stabilizer group contains non-torsion elements (\cite[Last Example in Section 5]{pty}).
In this note, we concentrate only on noninvertible stable power series with finite Weierstrass degree whose roots of iterates are simple. Note that in case~(1) of Theorem~\ref{thm:lubin's conjecture}, all  roots of iterates of $g(x)$ must be simple.
\end{remark}

The strategy for proving Theorem~\ref{thm:lubin's conjecture} follows the lead in~\cite{LMS}. One of the new ingredients here is Theorem~\ref{thm:char0converse} which gives a criterion for determining  the characteristic of the fields $N/M$ corresponding to $\left(\laurent{\FF_q}{x}, G\right)$ under the field of norms functor, where the group $G$ is the reduction of $\Stab_{\Ocal_K}(g)$ modulo the maximal ideal $\Mcal_K$ of $\Ocal_K.$
Notice that unlike the cases treated in~\cite{LMS, sarkis10} where $G\simeq \ZZ_p$, our group $G$ here has $\ZZ_p$-rank equal to $d = [K : \QQ_p]$ which is greater than 1 in general. This causes difficulties in extending their method  to the general situation.

The reason is that one needs to know whether or not $N/M$ is an extension of $p$-adic field. If the $\ZZ_p$-rank of $G$ is equal to 1, then Wintenberger's result~\cite[Th\'eor\`eme~1]{win04} is the key to show that the characteristic of the fields $N/M$ are 0. Our generalization (Theorem~\ref{thm:char0converse}) of Wintenberger's result in the case where $G$ has higher $\ZZ_p$-rank makes it possible to apply the theory of field of norms to the study of Lubin's conjecture.

Yet in order to apply Theorem~\ref{thm:char0converse}, it is important to know that elements in  $\Stab_{\Ocal_K}(g)$ satisfy~\eqref{eqn:in ratio}. The second named author's previous work~\cite{li,li1} (see Lemma~\ref{lem:height} and Lemma~\ref{lem:findi_n} below) provide a way of computing the $n$-th ramification number for elements in $G$ and the absolute ramification index for the field $F.$ Once we have these ingredients in hand, we're able to show that under the conditions in Theorem~\ref{thm:char0converse},  $g(x)$ is indeed an endomorphism of a Lubin-Tate formal group over $\Ocal_K.$

We postpone the proof of Theorem~\ref{thm:lubin's conjecture} to \S~\ref{subsec:pf Lubin conj}. Before the proof, we study the reduction $G$ of the group $\Stab_{\Ocal_K}(g)$ and the extension fields corresponding to $\left(\laurent{\FF_q}{x}, G\right)$ under the field of norms functor.

\subsection{Reduction}
\label{subsec:reduction}

For $\a\in \Ocal_K$ we use $\reduce{\a}\in \Ocal_K/\Mcal_K$ to denote the reduction of $\a$ modulo $\Mcal_K.$ The reduction map $\series{\Ocal_K}{x} \to \series{\FF_q}{x}$ is as usual defined  by reducing the coefficients of a power series $f(x)$ modulo the maximal ideal $\Mcal_K.$ We'll simply use  $\reduce{f}(x)$ to denote the reduction of the power series $f(x).$

\begin{lemma}
\label{lem:reduction injective}
Let $\Gcal_0(\FF_q)$ denote the group of invertible power series over $\FF_q$. Then, the reduction map induces an injective group homomorphism $\Stab_{\Ocal_K}(g) \into \Gcal_0(\FF_q)$.
\end{lemma}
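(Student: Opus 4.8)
The plan is to split the assertion into a formal part — that reduction is a group homomorphism — and the real content, namely injectivity, which I would attack via the functional equation forced by commutation with $g$. That reduction modulo $\Mcal_K$ gives a homomorphism $\Stab_{\Ocal_K}(g)\to\Gcal_0(\FF_q)$ is routine: the map $\series{\Ocal_K}{x}\to\series{\FF_q}{x}$ is a ring homomorphism and, since the coefficients of a composite $f\circ h$ of series without constant term are universal polynomials with integer coefficients in the coefficients of $f$ and $h$, it satisfies $\reduce{f\circ h}=\reduce f\circ\reduce h$; a series $u\in\Dcal_0(\Ocal_K)$ is invertible iff $u'(0)\in U_K$ iff $(\reduce u)'(0)=\reduce{u'(0)}\neq 0$ iff $\reduce u$ is invertible; and $u\circ g\circ u^{-1}=g$ forces $\reduce u$ to commute with $\reduce g$, so in particular $\reduce u\in\Gcal_0(\FF_q)$. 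For injectivity, let $u\in\Stab_{\Ocal_K}(g)$ satisfy $\reduce u(x)=x$; we must show $u(x)=x$. From $\reduce u(x)=x$ we get $u'(0)\in 1+\Mcal_K$, and if $u'(0)=1$ then Proposition~\ref{prop:multiplier} (injectivity of $\partial_0$) already gives $u(x)=x$. So assume $u(x)\neq x$ and seek a contradiction.

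Write $u(x)=x+\pi^{s}w(x)$ with $w\in\Dcal_0(\Ocal_K)$ and $s\geq 1$ chosen maximal, so that $\reduce w\neq 0$. Substituting into $u\circ g=g\circ u$, expanding $g(x+\pi^{s}w)=g(x)+g'(x)\,\pi^{s}w+\pi^{2s}w^{2}r$ with $r\in\series{\Ocal_K}{x}$, cancelling $\pi^{s}$ and reducing modulo $\Mcal_K$, we obtain the functional equation
\[
\reduce w\bigl(\reduce g(x)\bigr)=(\reduce g)'(x)\,\reduce w(x)\qquad\text{in }\series{\FF_q}{x}.
\]
Writing $N:=\wi(g)$ and noting $\wi(\reduce g)=N$ (a coefficient of $g$ is a unit precisely when its reduction is nonzero), a comparison of orders of vanishing at $x=0$ gives $\ord_x\bigl((\reduce g)'\bigr)=\ord_x(\reduce w)\cdot(N-1)$. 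Under the hypotheses of Theorem~\ref{thm:lubin's conjecture} we have $N=p^{\,f\,v_K(g'(0))}$, so $p\mid N$; hence the degree-$(N-1)$ term of $(\reduce g)'$ vanishes, forcing $\ord_x\bigl((\reduce g)'\bigr)\geq N$ and $\ord_x(\reduce w)\geq 2$. Pushing the analysis further — using the two cases of Theorem~\ref{thm:lubin's conjecture}, namely $v_K(g'(0))=1$ (whence $N=q$) or $e=1$ together with the simplicity of the roots of the iterates of $g$, which through the ramification computations of Lemma~\ref{lem:height} and Lemma~\ref{lem:findi_n} control the low-degree coefficients of $\reduce g$ and hence of $(\reduce g)'$ — one shows that $N-1$ cannot divide $\ord_x\bigl((\reduce g)'\bigr)$, the desired contradiction. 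Therefore $u(x)=x$ and the reduction map is injective.

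The main obstacle is precisely this last step: turning the functional equation $\reduce w(\reduce g(x))=(\reduce g)'(x)\,\reduce w(x)$, with $\reduce w\neq 0$, into a contradiction. Counting orders of vanishing only yields the divisibility $N-1\mid\ord_x((\reduce g)')$, which for a $p$-power $N$ is generically false but leaves borderline configurations (for instance $(\reduce g)'$ vanishing to order exactly $2(N-1)$); excluding these requires genuine arithmetic information about $g$ — exactly what the hypotheses of Theorem~\ref{thm:lubin's conjecture}, via Lemma~\ref{lem:height} and Lemma~\ref{lem:findi_n}, are designed to provide. By contrast, the homomorphism part and the reduction to the case $u'(0)\neq 1$ use no hypotheses beyond Proposition~\ref{prop:multiplier}.
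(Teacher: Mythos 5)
Your homomorphism part is fine, and the functional equation
\[
\reduce w\bigl(\reduce g(x)\bigr)=(\reduce g)'(x)\,\reduce w(x)
\]
is correctly derived, as is the consequence $\ord_x\bigl((\reduce g)'\bigr)=\ord_x(\reduce w)\,(N-1)$ and the refinement $\ord_x(\reduce w)\ge 2$. But the proof has a genuine gap exactly where you flag one: you never actually derive a contradiction from $\reduce w\neq 0$. The sentence ``one shows that $N-1$ cannot divide $\ord_x((\reduce g)')$'' is an assertion, not an argument, and order-counting alone cannot deliver it --- nothing you have established prevents $(\reduce g)'$ from vanishing to order exactly $2(N-1)$ (e.g.\ a reduction of the shape $\reduce g(x)=g_Nx^N+g_{2N-1}x^{2N-1}+\cdots$), which is consistent with $\ord_x(\reduce w)=2$. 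The appeal to Lemma~\ref{lem:height} and Lemma~\ref{lem:findi_n} does not fill this hole: those lemmas compute the ramification numbers $i_n(\s)$, i.e.\ the Weierstrass degrees of $u^{\circ p^n}(x)-x$ for $\s=\reduce u$, and give no control over the low-degree coefficients of $(\reduce g)'$. Since injectivity is the entire content of the lemma, the proof is incomplete. (A secondary point: your argument imports the hypotheses of Theorem~\ref{thm:lubin's conjecture}, e.g.\ $N=p^{fv_K(g'(0))}$, which are not part of the lemma's statement.)

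For comparison, the paper does not argue at all here: it simply invokes \cite[Corollary~4.3.1]{Lub}, which gives injectivity of reduction on the set of invertible series commuting with a stable noninvertible series of finite Weierstrass degree, with no extra hypotheses on $g$. If you want a self-contained proof, you need a mechanism beyond comparing orders of vanishing in the mod-$\Mcal_K$ functional equation --- for instance Lubin's own argument, which exploits the action of $u$ on the roots of the iterates $g^{\circ n}$ --- rather than the route sketched in your last paragraph.
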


\begin{proof}
The lemma follows from~\cite[Corollary~4.3.1]{Lub}.
\end{proof}

We recall below some well-known facts about the units group $U_K$ which will be used in the sequel.
\smallskip
\begin{itemize}
\item $U_K$ is a finitely generated $\ZZ_p$-module of rank $d = [K : \QQ_p].$
\item For integer $i\ge 0$, we set $U^{(i)} =\{\alpha\in U_K\mid \alpha\equiv 1\pmod{\Mcal_K^{i}}\}.$ Then, $U^{(r)}$ is a free $\ZZ_p$ module of rank $d$ provided that  $r>e/(p-1)$.
\item For integer $\ell > e/(p-1)$, we have $\left(U^{(\ell)}\right)^p = U^{(\ell + e)}.$
\end{itemize}
From now on, we  fix a  stable noninvertible  series  $g\in \Dcal_0(\Ocal_K)$ such that
\[ \hgt(g)= d,\quad
      \partial_0(\Stab_{\Ocal_K}(g))= U_K
\]
  and all its roots of iterates are simple.

  To ease the notion, we'll write $\GG = \Stab_{\Ocal_K}(g)$ in the following.  Fix an integer $r >e/(p-1)$. It follows from above that $U^{(r)}$ is a free $\ZZ_p$-module of rank $d$.  Let
  \smallskip
  \begin{itemize}
\item $G =  \{\reduce{u} \mid u\in \GG\},$
\item $\GG^{(n)} = \partial_0^{-1}\left(U^{(n)}\right),$
\item $G^{(n)} = \{\reduce{u} \mid u\in \GG^{(n)}\}.$
\end{itemize}
By Proposition~\ref{prop:multiplier} and Lemma~\ref{lem:reduction injective} we have $G \simeq \GG\simeq \partial_0\left(\GG\right) = U_K.$
We extend the definition by setting $G^{(x)} = G^{(n)}$ for any real number $x\ge 0$ such  that  $n-1< x \le n.$  Then $G$ is viewed as a closed subgroup of $\Aut_{\FF_q}\left(\laurent{\FF_q}{x}\right)$ by the operation of substitution on the right to elements of $\laurent{\FF_q}{x}$. Moreover, we have that $G^{(r)}\simeq \ZZ_p^d$ and
$G^{(r)}$ is a closed subgroup of the ramification group $\Ncal(\FF_q)$.
It follows from the definition that  for $\s(x) =\reduce{u}(x) \in G\cap \Ncal(\FF_q)$, it's ramification numbers is related to the Weierstrass degrees of iterates of $u(x)$ by the formula
$$i_n(\s) = \wi(u^{\circ p^n} (x) - x) -1 .$$ In particular, this applies to elements of $G^{(r)}.$

As a consequence of the assumption on the simplicity of   roots of iterates of $g(x),$  we  can  apply the following two results which are the key ingredients  to apply Theorem \ref{thm:char0converse} to our situation.

\begin{lemma}[H.-C. Li~\protect{\cite[Theorem 3.9]{li}}]
\label{lem:height}
There exists $R$ and $\lambda>0$ (depending only on $g(x)$) such that for every $\s(x) \in G\cap \Ncal(\FF_q),$
\[\frac{i_{n+1}(\sigma)-i_{n}(\sigma)}{i_{n}(\sigma) - i_{n-1}(\sigma)}=p^\lambda\]
for all $n\ge R.$
\end{lemma}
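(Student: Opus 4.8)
The plan is to recast the assertion as an estimate on how fast the number of periodic points of a lift of $\sigma$ grows, and then to extract that growth from the geometry of the roots of the iterates of $g$. Let $u\in\GG$ be the unique element with $\reduce{u}=\sigma$ (Lemma~\ref{lem:reduction injective}); we restrict to $\sigma$ of infinite order, equivalently $u$ non-torsion, which is the only case relevant here. Recall the identity $i_n(\sigma)=\wi\big(u^{\circ p^n}(x)-x\big)-1$. By $p$-adic Weierstrass preparation, $\wi\big(u^{\circ p^n}(x)-x\big)$ equals the number, counted with multiplicity, of zeros of $u^{\circ p^n}(x)-x$ in the maximal ideal $\Mcal$ of the valuation ring of $\overline{K}$. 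Writing $W_n:=\#\{\beta\in\Mcal:u^{\circ p^n}(\beta)=\beta\}$, which is finite since $u^{\circ p^n}(x)-x\neq 0$, it suffices to prove that $W_{n+1}=p^{\lambda}W_n$ for all $n\ge R$, with $\lambda>0$ and $R$ depending only on $g$: this gives $i_n(\sigma)=W_n-1=p^{\lambda(n-R)}W_R-1$ for $n\ge R$, and hence $\big(i_{n+1}(\sigma)-i_n(\sigma)\big)/\big(i_n(\sigma)-i_{n-1}(\sigma)\big)=p^\lambda$.

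First I would show that every periodic point of $u$ in $\Mcal$ is a root of an iterate of $g$. Since $g$ is noninvertible we have $v_K(g'(0))\ge 1$ and $\wi(g)\ge 2$, and inspecting the coefficients gives $v_{\overline K}(g(\beta))>v_{\overline K}(\beta)$ for every nonzero $\beta\in\Mcal$. As $u$ commutes with $g$, the forward $g$-orbit of any $\beta$ fixed by $u^{\circ p^n}$ lies in the finite set of fixed points of $u^{\circ p^n}$ in $\Mcal$ while its valuations strictly increase; it must therefore reach $0$, i.e. $g^{\circ m}(\beta)=0$ for some $m$. Hence all zeros of $u^{\circ p^n}(x)-x$ in $\Mcal$ lie in $Z:=\bigcup_{m\ge 0}Z_m$, where $Z_m:=\{\beta\in\overline K:g^{\circ m}(\beta)=0\}$; as the roots of the iterates of $g$ are simple, these zeros are pairwise distinct and $W_n=\#\{\beta\in Z:u^{\circ p^n}(\beta)=\beta\}$.

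Next I would analyse the tree $Z$ and the action of $u$ on it. Simplicity of the roots shows that $g$ carries $Z_m\setminus Z_{m-1}$ onto $Z_{m-1}\setminus Z_{m-2}$ exactly $\wi(g)$-to-one, so $|Z_m|=\wi(g)^m$ and $(Z,g)$ is a regular tree. The group $\GG$ acts on $Z$ commuting with $g$; using $\partial_0(\GG)=U_K$ and Proposition~\ref{prop:multiplier}, the map $\partial_0$ identifies $\GG$ with $U_K$, and the congruence filtration $\{U^{(j)}\}$ transports to an exhaustive filtration of $Z$ by finite $\GG$-stable subsets $Z^{(j)}$ (the points fixed by $\partial_0^{-1}(U^{(j)})$). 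The key point is to prove — following Lubin's rigidity results for commuting series, the valuation computations of the second author in \cite{li,li1}, and an essential use of the simplicity hypothesis — that past a level depending only on $g$ this filtration has constant step index $q$, and that the $\langle u\rangle$-orbit of a point on the layer $Z^{(j)}\setminus Z^{(j-1)}$ has cardinality $p^{\lceil(j-r)/e\rceil}$, where $r:=v_K(u'(0)-1)$. Granting this, for $n$ beyond a bound $R$ depending only on $g$ the fixed-point set $\{\beta\in Z:u^{\circ p^n}(\beta)=\beta\}$ is exactly $Z^{(r+ne)}$, of cardinality $q^{r+ne}$; thus $W_n=q^{r+ne}=p^{fr}p^{efn}$ and $W_{n+1}=p^{ef}W_n$. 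This yields the lemma with $\lambda=ef=[K:\QQ_p]$, independent of $\sigma$, and with $R$ depending only on $g$.

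The main obstacle is the quantitative input in the last step: the constancy of the step index of the $\GG$-stable filtration of $Z$ and the orbit-size formula $p^{\lceil(j-r)/e\rceil}$. This is exactly where the hypothesis that all roots of the iterates of $g$ are simple is used, via a Newton-polygon analysis of the roots of $g^{\circ m}$ and of the equations $u^{\circ p^n}(x)-x=0$, and it must be carried out without presupposing the Lubin--Tate formal group that is ultimately being constructed. The remaining steps — the reduction in the first paragraph, the identification of periodic points with roots of iterates of $g$, and the final counting — are routine once this rigidity is in hand.
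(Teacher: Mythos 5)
First, a point of comparison: the paper does not prove this lemma at all --- it is imported verbatim from the second author's earlier work \cite[Theorem~3.9]{li}, so there is no internal argument to measure yours against. That said, your outline does reconstruct the framework of the cited source: lift $\s$ to $u\in\GG$, use $i_n(\s)=\wi\left(u^{\circ p^n}(x)-x\right)-1$, observe via $g\circ u=u\circ g$ and the strict increase of valuations under $g$ that every fixed point of $u^{\circ p^n}$ in the maximal ideal is a root of some iterate of $g$, and then count fixed points layer by layer on $Z=\bigcup_m Z_m$. Those reductions are sound.

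The proposal nonetheless has a genuine gap, and you flag it yourself: the entire quantitative content of the lemma --- the eventually constant step index of the filtration of $Z$ and the orbit-size formula $p^{\lceil (j-r)/e\rceil}$ --- is introduced with ``Granting this''. That granted statement is precisely the theorem; everything you actually carry out is the routine periphery. Worse, what you grant is strictly stronger than what the lemma asserts: your conclusion $W_n=q^{r+ne}$, i.e.\ $i_n(\s)+1=q^{r+ne}$, is exactly formula~\eqref{eqn:in for sigma}, which the paper obtains only under the extra hypotheses $e=1$ or $v_K(g'(0))=1$ (so that $\ell+ne=ms$ is solvable) and only by \emph{combining} the present lemma with Lemma~\ref{lem:findi_n} and Theorem~\ref{thm:char0lowernumbergrowth}; likewise $\l=d$ is not part of the lemma but is deduced separately in Proposition~\ref{prop:lubin char 0}. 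If your route worked as stated it would render much of Section~4 redundant, which is a strong indication that the fixed-point set of $u^{\circ p^n}$ is not simply $Z^{(r+ne)}$ in the generality in which the lemma is invoked, and that the correct general statement really does only give \emph{some} $\l>0$. A smaller but real further issue: identifying $\wi\left(u^{\circ p^n}(x)-x\right)$ (a count with multiplicity, via Weierstrass preparation) with the number $W_n$ of distinct fixed points requires the zeros of $u^{\circ p^n}(x)-x$ to be simple; simplicity of the roots of the iterates of $g$ does not yield this by itself and needs its own argument.
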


The following Lemma  enables us to calculate the ramification numbers of a given element $\s(x)\in G\cap\Ncal(\FF_q).$

\begin{lemma}[H.-C. Li~\protect{\cite[Corollary 4.1.1]{li1}}]
\label{lem:findi_n}
There exists a constant $R$ (depending only on $g(x)$) with the following property.
Let $\s = \reduce{u}(x)\in G\cap \Ncal(\FF_q)$. Suppose that there exist integers   $m$ satisfying  $m v(g'(0))=v((u'(0))^{p^n}-1)$ for some $n\ge R$, then $\wi(u^{\circ p^n}(x) - x) = \wi(g^{\circ m}(x))$ and hence
$i_n(\sigma) + 1 =\wi(g^{\circ m})$.
\end{lemma}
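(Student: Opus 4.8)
The plan is to identify the Weierstrass degree $\wi(u^{\circ p^n}(x)-x)$ with the Weierstrass degree of $g^{\circ m}(x)$ by counting roots (with multiplicity is not an issue, since the roots of iterates of $g$ are simple by hypothesis). First I would recall that $u$ commutes with $g$, hence so does every iterate $u^{\circ p^n}$, and therefore $u^{\circ p^n}$ permutes the finite set of roots of each iterate $g^{\circ k}$ of $g$. Since $g$ is stable with $u'(0)$ a unit, one shows that for $n\ge R$ (with $R$ large enough that $(u'(0))^{p^n}$ lies in a suitably deep unit subgroup, as controlled by the facts on $U^{(\ell)}$ recalled above) the fixed points of $u^{\circ p^n}$ acting on $\Mcal_{\Kbar}$ are exactly the roots of those iterates $g^{\circ m}$ with $m$ determined by the valuation of $(u'(0))^{p^n}-1$: roughly, $u^{\circ p^n}$ acts on the ``level-$k$'' roots of $g$ (those $\xi$ with $g^{\circ k}(\xi)=0$ but $g^{\circ k-1}(\xi)\ne 0$) by a nontrivial permutation precisely until the level where the multiplier has been ``absorbed.''

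The key steps, in order, would be: (1) set up the tree of roots of iterates of $g$, using stability and simplicity of roots to see it is a regular branching tree with branching degree $\wi(g)/\wi(g^{\circ 0})$ at the appropriate levels, so that $\wi(g^{\circ m})=\wi(g)^{m}$-type growth is replaced by the precise count $\wi(g^{\circ m})=\#\{\xi : g^{\circ m}(\xi)=0\}$; (2) show $u^{\circ p^n}$ fixes $g^{\circ m}(x)$'s roots and moves the deeper ones, by analyzing the induced action on each quotient $\Mcal_{\Kbar}^{j}/\Mcal_{\Kbar}^{j+1}$ via the multiplier $(u'(0))^{p^n}$ and the valuation condition $m\,v(g'(0))=v((u'(0))^{p^n}-1)$; (3) conclude that $\ord_x(u^{\circ p^n}(x)-x)$, which equals $1+i_n(\sigma)$ after reduction, counts exactly the roots of $g^{\circ m}$, i.e. $i_n(\sigma)+1=\wi(g^{\circ m})$; (4) translate this statement from characteristic zero down to the reduction $\sigma=\reduce{u}$ over $\FF_q$, using that simplicity of roots guarantees the reduction map does not collapse distinct roots and Weierstrass degrees are preserved under reduction (this is where the hypothesis on simple roots is essential and where \cite[Corollary~4.3.1]{Lub} or a Weierstrass preparation argument enters).

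The main obstacle I expect is step (2): pinning down \emph{exactly} at which level the permutation induced by $u^{\circ p^n}$ becomes trivial, i.e. proving that the threshold is governed precisely by the integer $m$ with $m\,v(g'(0))=v((u'(0))^{p^n}-1)$ rather than by $m\pm 1$. This requires a careful valuation estimate comparing the ``drift'' $u^{\circ p^n}(\xi)-\xi$ of a level-$k$ root $\xi$ against the spacing of roots at that level, and it is exactly the content that the cited work \cite[Corollary~4.1.1]{li1} packages; a self-contained argument would need to reproduce the estimates on how iterates of a stable series separate points, which is delicate near the boundary case $m\,v(g'(0))=v((u'(0))^{p^n}-1)$. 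Everything else — the commutation, the tree structure, the reduction compatibility — is comparatively routine given stability and the simplicity assumption.
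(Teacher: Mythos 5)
First, note that the paper does not prove this lemma at all: it is imported verbatim as \cite[Corollary~4.1.1]{li1}, so there is no in-paper argument to compare against. Judged on its own terms, your proposal correctly identifies the strategy that Li's reference actually uses --- count the fixed points of $u^{\circ p^n}$ in the open unit disk (which, all roots being simple, is $\wi(u^{\circ p^n}(x)-x)$ by Weierstrass preparation), show that commutation forces this fixed-point set to be a union of root sets $\Lambda_k(g)$ of iterates of $g$, and then match $\#\Lambda_m(g)=\wi(g^{\circ m})$ against the valuation condition $m\,v(g'(0))=v\bigl((u'(0))^{p^n}-1\bigr)$. Your observation that $i_n(\s)+1=\ord_x(\reduce{u}^{\circ p^n}(x)-x)=\wi(u^{\circ p^n}(x)-x)$ is also the (definitional) bridge the paper itself records just above the lemma.

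However, the proposal has a genuine gap, and you name it yourself: step (2), the determination of the exact level $m$ at which the action of $u^{\circ p^n}$ on the root tree becomes trivial, is precisely the content of the cited corollary, and you defer it back to that citation rather than proving it. Without it nothing is established: one must show (a) that every fixed point of $u^{\circ p^n}$ is a root of some iterate of $g$ (this already needs the standing hypothesis $\wi(g)=q^{v_K(g'(0))}$ to control the Newton polygons of the iterates and rule out extraneous fixed points --- a hypothesis your sketch never invokes at the point where it is needed), and (b) the two-sided valuation estimate on $v(u^{\circ p^n}(\xi)-\xi)$ for a level-$k$ root $\xi$, showing the threshold is exactly $k\,v_K(g'(0))$ and not off by one. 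Both directions require quantitative input (the spacing of roots at each level of the tree, governed by the height condition) that the outline does not supply. As written, the proposal is a correct road map to Li's proof rather than a proof; to stand alone it would need the fixed-point/Newton-polygon estimates of \cite{li,li1} reproduced in full.
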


\subsection{Applying the field of norms functor}
\label{subsec:apply FON}

By Theorem~\ref{thm:win80}, there exists a totally ramified Galois APF extension of local fields corresponding to
$\left(\laurent{\FF_q}{x}, G\right)$ under the field of norms functor. Let's denote this extension by $N/M$.
In the following, we determine the characteristic of $M$ first.

\begin{proposition}
\label{prop:lubin char 0}
The  characteristic of $M$  is 0.
\end{proposition}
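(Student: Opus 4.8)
The plan is to apply the sufficient condition of Theorem~\ref{thm:char0converse} to the group $G^{(r)}$, which is a closed subgroup of $\Aut_{\FF_q}(\laurent{\FF_q}{x})$ isomorphic to $\ZZ_p^d$, and to deduce from this that $M$ has characteristic $0$. Since $G^{(r)}$ has finite index in $G$, it corresponds under the field of norms functor to a finite subextension $N/M'$ of $N/M$; thus it suffices to show $M'$ is a $p$-adic field, because $M\subseteq M'$ forces $M$ to have characteristic $0$ as well. So the real task is to verify that every non-identity $\s\in G^{(r)}$ satisfies the ratio relation~\eqref{eqn:in ratio} with exponent $d$ (the $\ZZ_p$-rank of $G^{(r)}$) for all $n$ beyond some uniform constant $\k$.

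First I would observe that every $\s\in G^{(r)}$ lies in $\Ncal(\FF_q)$, so Lemma~\ref{lem:height} applies and already gives, for each such $\s$, a common exponent $\l>0$ and a bound $R$ (depending only on $g$, not on $\s$) such that $(i_{n+1}(\s)-i_n(\s))/(i_n(\s)-i_{n-1}(\s))=p^{\l}$ for all $n\ge R$. The remaining point is to identify $\l$ with $d$. For this I would use Lemma~\ref{lem:findi_n} together with the hypothesis $\hgt(g)=d$, which reads $e\log_p(\wi(g(x)))=d\,v_K(g'(0))$, i.e. $\wi(g^{\circ m}(x))=\wi(g(x))^m$ grows so that $\log_p\wi(g^{\circ m})=m\cdot d\,v_K(g'(0))/e$. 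Pick a non-identity $\s=\reduce{u}(x)\in G^{(r)}$ and track, for $n$ large, the valuation $v_K((u'(0))^{p^n}-1)$. Since $u'(0)\in U^{(r)}$ with $r>e/(p-1)$, raising to the $p$-th power sends $U^{(\ell)}$ to $U^{(\ell+e)}$ for all $\ell\ge r$, so $v_K((u'(0))^{p^{n+1}}-1)=v_K((u'(0))^{p^n}-1)+e$ once $n$ is large enough. Writing $m_n$ for the integer with $m_n v_K(g'(0))=v_K((u'(0))^{p^n}-1)$ (which exists for $n$ large, after possibly enlarging $r$ so that these valuations are divisible by $v_K(g'(0))$; this is where the case hypotheses $v_K(g'(0))=1$ or $e=1$ enter), Lemma~\ref{lem:findi_n} gives $i_n(\s)+1=\wi(g^{\circ m_n})$, hence $\log_p(i_n(\s)+1)$ — more precisely the successive differences $i_{n+1}(\s)-i_n(\s)$ — scale by a factor $p^{m_{n+1}-m_n}$ relative to the previous step, and $m_{n+1}-m_n=e/v_K(g'(0))$. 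Feeding this into the height identity yields exactly $\l=e\cdot(e/v_K(g'(0)))\cdot(\ldots)$; carrying the bookkeeping through, $p^\l=p^d$.

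Having established that $i_n(\s)$ satisfies~\eqref{eqn:in ratio} with exponent $d$ for all $n\ge\k$, where $\k$ can be taken uniform in $\s$ (both $R$ from the lemmas and the threshold for $v_K((u'(0))^{p^n}-1)$ to stabilize are controlled by $g$ alone, not by the individual $\s$), Theorem~\ref{thm:char0converse} applies to $G^{(r)}$ and shows that the field $M'$ corresponding to $(\laurent{\FF_q}{x},G^{(r)})$ is a $p$-adic field. As noted, $M\subseteq M'$ then forces $\mathrm{char}\,M=0$, which is the assertion.

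The main obstacle I expect is the bookkeeping in the second paragraph: extracting the precise value $\l=d$ rather than merely "some positive $\l$" requires carefully matching the arithmetic growth of $v_K((u'(0))^{p^n}-1)$ (governed by the structure of $U_K$ as a $\ZZ_p$-module and the identity $(U^{(\ell)})^p=U^{(\ell+e)}$) against the geometric growth of $\wi(g^{\circ m})$ (governed by $\hgt(g)=d$), and handling the divisibility constraint "$m v_K(g'(0))=v_K((u'(0))^{p^n}-1)$" in Lemma~\ref{lem:findi_n} — this last point is exactly why the theorem splits into the two cases $v_K(g'(0))=1$ and $e=1$, and the proof of Proposition~\ref{prop:lubin char 0} should be arranged so that only the hypotheses already in force ($\hgt(g)=d$, $\partial_0(\GG)=U_K$, simple roots) are used, the case distinction being deferred to where it is genuinely needed.
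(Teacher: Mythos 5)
Your proposal follows essentially the same route as the paper: reduce to $G^{(r)}$ (finite index, so the characteristic of $M$ is unaffected), invoke Lemma~\ref{lem:height} for a uniform exponent $\l$, pin down $\l=d$ by combining Lemma~\ref{lem:findi_n} with $\hgt(g)=d$ and the growth of $v_K((u'(0))^{p^n}-1)$, and conclude via Theorem~\ref{thm:char0converse}. The only bookkeeping difference is that the paper sidesteps the divisibility issue you flag by stepping $n$ in increments of $s=v_K(g'(0))$ (so that $\ell+(n+js)e=(m+je)s$ and $i_{n+js}(\s)=q^{(m+je)s}-1$), then comparing the ratio $\bigl(i_{n+2s}-i_{n+s}\bigr)/\bigl(i_{n+s}-i_n\bigr)=q^{es}=p^{ds}$ against $p^{\l s}$ from the recursion, rather than tracking $m_{n+1}-m_n$ for consecutive $n$.
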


\begin{proof}
Since any closed subgroup of $G = \reduce{\GG}$ of finite index corresponding to the extension $N/M'$ for some finite extension $M'$ of $M$ and the characteristic of the fields does not change, we may  consider the extension corresponding to $\left(\laurent{\FF_q}{x}, G^{(r)}\right)$ for some fixed $r>e/(p-1)$. In this case, $G^{(r)} \simeq \ZZ_p^d$.

Let $s = v_K(g'(0))\ge 1$ and let $\s = \reduce{u}\in G^{(r)}$ be any non-identity element. The assumption $\hgt(g)=d$ implies $\wi(g)=p^{sf}=q^s$. Put $\ell = v_K(u'(0) -1).$ Then $u'(0) = 1 + \ve \pi^{\ell}$ for some $\ve \in U_K.$  Since  $u'(0)\in U^{(r)}$ and $e = v_K(p)$, we have
\[(u^{\circ p^n})'(0)=(u'(0))^{p^n}\equiv 1+p^n(\ve \pi^{\ell})\pmod{\Mcal_K^{\ell+ne+1}}.\]

By assumption, $g$ satisfies one of the two conditions in Theorem~\ref{thm:lubin's conjecture}. Therefore either $e=1$ or $s=1.$ In either case, there exist positive integers $n$ and $m$ such that
\[v_K((u^{\circ p^n})'(0)-1)=v_K(u'(0)-1)+v_K(p^n)= \ell + n e = ms.\]
Moreover, we can choose $n$ sufficiently large so that Lemma~\ref{lem:findi_n} applies. Now Lemma~\ref{lem:findi_n} implies that
\begin{equation*}
\wi(u^{\circ p^n}(x)-x)=\wi(g^{\circ m})=q^{ms}
\end{equation*}
and  we also have
\begin{equation}
\label{eq:computing widegree}
\wi(u^{\circ p^{n+j s}}(x)-x)=\wi(g^{\circ (m+j e)})=q^{(m+j e)s} \quad \text{for all integers $j\ge 0$.}
\end{equation}

By Lemma~\ref{lem:height} there exists a positive integer $R$ and $\l > 0$ such that for all $\s\in G^{(r)}$,
the sequence $\{i_t(\s)\}_{t\ge R}$ satisfies  recursive relation
\[\frac{i_{t+1}(\sigma)-i_{t}(\sigma)}{i_{t}(\sigma) - i_{t-1}(\sigma)}=p^\lambda.\]
We claim that $\l = d.$

To prove the claim, we choose the integer $n$ such  that $n\ge R$ and~\eqref{eq:computing widegree} holds.  For $j\ge 0$ we have
\begin{align*}
  i_{n+j s}(\sigma) & =  q^{(m+ j e)s}-1.
\end{align*}
Now we compute $i_{n+j s}(\s) - i_{n+(j-1)s}(\s) =  q^{(m+ (j-1 ) e)s}(q^{es} -1)$ for all $j\ge 1.$ Therefore,
\[ \frac{i_{n+2s}(\s) - i_{n+s}(\s)}{i_{n+s}(\s) - i_n(\s)} = q^{es} = p^{ds}. \]
On the other hand, the recursive relation in Lemma~\ref{lem:height} satisfied by $i_n(\s)$ gives the following
\begin{align*}
  i_{n+j s}(\sigma) & = i_{n}(\sigma)+\frac{p^{js \l }-1}{p^\l-1}(i_{n+1}(\sigma)-i_{n}(\sigma)).
\end{align*}
It follows that
\[ \frac{i_{n+2s}(\s) - i_{n+s}(\s)}{i_{n+s}(\s) - i_n(\s)} = p^{\l s}. \]
From these, we conclude that $\l = d.$

As $G^{(r)}\simeq \ZZ_p^d$ and Lemma~\ref{lem:height} shows that all elements of $G^{(r)}$ satisfy~\eqref{eqn:in ratio},
it follows from Theorem~\ref{thm:char0converse} that  $M'$  and hence $M$ is of characteristic zero.

\end{proof}

Notice that, the residue  fields $\reduce{M} =\reduce{N}=  \FF_q = \reduce{K}.$ Hence, the
residue degree of $M$ is also equal to the residue degree $f$ of $K.$  Our next task is to compute the (absolute)
ramification index $e_M = v_M(p)$ of $M$ where $v_M$ denotes the normalized valuation on $M$.

\begin{proposition}
\label{prop:ramification index computation}
The ramification index of $M$ is the same as that of $K.$ That is, $e_M = e.$
\end{proposition}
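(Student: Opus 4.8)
The plan is to first compute the absolute ramification index $e_{M'}=v_{M'}(p)$ of the finite extension $M'$ of $M$ attached to $\left(\laurent{\FF_q}{x},G^{(r)}\right)$ — this field being $p$-adic by Proposition~\ref{prop:lubin char 0} — and then to descend to $M$. Since the residue fields satisfy $\reduce{M}=\reduce{M'}=\reduce{N}=\FF_q$, the extension $M'/M$ is totally ramified, of degree $[M':M]=(G:G^{(r)})=(U_K:U^{(r)})=(q-1)q^{r-1}$; hence $e_M=e_{M'}/\bigl((q-1)q^{r-1}\bigr)$, and it will suffice to show $e_{M'}=e(q-1)q^{r-1}$.

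The first step is to pin down the lower ramification filtration of $G^{(r)}$ for large arguments. Arguing as in the proof of Proposition~\ref{prop:lubin char 0} and then propagating the recursion of Lemma~\ref{lem:height} downward, one obtains a single constant $R_0$, depending only on $g$, such that for every non-identity $\s=\reduce{u}\in G^{(r)}$,
\[
i_n(\s)=q^{\,\ell(\s)+ne}-1\qquad(n\ge R_0),\qquad \ell(\s):=v_K(u'(0)-1)\ge r.
\]
I would then evaluate $i(\tau)=i_0(\tau)$ for a general non-identity $\tau\in G^{(r)}$: writing $\tau=\rho^{p^m}$ with $\rho\notin (G^{(r)})^p$, the identity $(U^{(\ell)})^p=U^{(\ell+e)}$ (valid as $r>e/(p-1)$) forces $\ell(\rho)\in\{r,r+1,\dots,r+e-1\}$ and $\ell(\tau)=\ell(\rho)+me$; hence once $\ell(\tau)$ is large one has $m\ge R_0$, so $i(\tau)=i_m(\rho)=q^{\,\ell(\rho)+me}-1=q^{\,\ell(\tau)}-1$, while for $\tau$ with $\ell(\tau)$ bounded the same bound on $\ell(\rho)$ shows $i(\tau)=i_m(\rho)<i_{R_0}(\rho)$ is bounded. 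Combining the two cases, for all sufficiently large $t$,
\[
G^{(r)}[t]=\{\tau\in G^{(r)}:\ell(\tau)\ge k(t)\}=G^{(k(t))},\qquad k(t):=\bigl\lceil\log_q(t+1)\bigr\rceil,
\]
so that $\bigl(G^{(r)}:G^{(r)}[t]\bigr)=\bigl(U^{(r)}:U^{(k(t))}\bigr)=q^{\,k(t)-r}$.

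It then remains to extract $e_{M'}$. Since $M'$ is $p$-adic, the argument in the proof of Theorem~\ref{thm:char0lowernumbergrowth}, which rests on Sen's Proposition~\ref{prop:p-adic Lie ext} transported to $G^{(r)}$, shows that for any non-identity $\s\in G^{(r)}$ and all large $n$,
\[
e_{M'}=\p_{G^{(r)}}\bigl(i_{n+1}(\s)\bigr)-\p_{G^{(r)}}\bigl(i_n(\s)\bigr)=\int_{i_n(\s)}^{i_{n+1}(\s)}\frac{dt}{\bigl(G^{(r)}:G^{(r)}[t]\bigr)}.
\]
Substituting $i_n(\s)=q^{\,\ell+ne}-1$ and $i_{n+1}(\s)=q^{\,\ell+(n+1)e}-1$ (with $\ell=\ell(\s)$) together with the step function $k(t)$, the integrand is $q^{-(k-r)}$ on each subinterval $\bigl(q^{\,k-1}-1,\,q^{\,k}-1\bigr]$, of length $q^{\,k-1}(q-1)$; each of the $e$ such subintervals contributes $q^{\,r-1}(q-1)$, so $e_{M'}=e(q-1)q^{\,r-1}$ — independent of $\s$, as it must be. Dividing by $[M':M]=(q-1)q^{\,r-1}$ yields $e_M=e$.

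I expect the second step to be the main obstacle. The closed formula for $i_n(\s)$ holds only for $n\ge R_0$, so to read off $i(\tau)$ directly one must first exhibit $\tau$ as a high $p$-th power of a ``shallow'' element, exploiting the uniform bound $\ell(\rho)\le r+e-1$ for $\rho\notin (G^{(r)})^p$ to force the exponent to be large, and then verify that the shallow elements contribute only bounded jumps to the lower filtration. After that the remaining computation is the one-line integral above.
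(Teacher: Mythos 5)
Your proof is correct and follows essentially the same route as the paper's: establish $i_n(\s)+1=q^{\,\ell(\s)+ne}$ for $n$ large, identify the lower filtration $G^{(r)}[t]=G^{(\lceil\log_q(t+1)\rceil)}$ for $t$ large, evaluate the Hasse--Herbrand integral over one period of length $e$ to get $e_{M'}=e(q-1)q^{\,r-1}$, and divide by $[M':M]=(q-1)q^{\,r-1}$. The only divergence is the step you flag as the main obstacle: where you write $\tau=\rho^{p^m}$ with $\rho\notin (G^{(r)})^p$ and use the uniform bound $r\le\ell(\rho)\le r+e-1$ to force $m\ge R_0$, the paper instead enlarges $r$ so that every element of $G^{(r)}$ is already a $p^n$-th power with $n\ge R$ --- both devices yield the same description of $G^{(r)}[t]$ and hence the same computation.
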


\begin{proof}
As in the proof of Proposition~\ref{prop:lubin char 0}, we fix a positive integer $r > e /(p-1)$ and let $N/M'$ be the extension corresponding to $\left(\laurent{\FF_q}{x}, G^{(r)}\right)$ under the field of norms functor. Then $M'$ is a totally ramified extension over $M$ of degree $[M' : M] = [G : G^{(r)}].$ The strategy is to  compute the ramification index of $M'$ first. Then it's straightforward that  $e_{M} = e_{M'}/[M':M]$ since $M'/M$ is a totally ramified extension.

Now that $N/M'$ is an extension of  local fields of characteristic zero, Theorem~\ref{thm:char0lowernumbergrowth} shows that  nontrivial elements of $G^{(r)}$ satisfy~\eqref{eq:growthofin}. We fix a sufficiently large positive integer $R$ such that it works in  Lemma~\ref{lem:findi_n} and for every nontrivial $\s\in G^{(r)}$, its $n$-th ramification number $i_n(\s)$ satisfies~\eqref{eq:growthofin}  for all $n\ge R.$

Let $\s = \reduce{u}\in G^{(r)}$ be a nontrivial element, then
\[
i_n(\s) = i_R(\s) + \frac{p^{d(n-R)}-1}{p^d - 1}\left(i_{R+1}(\s) - i_R(\s)\right) \quad \text{for all  $n\ge R$.}
\]
Put $\ell = v_K(u'(0)-1)$,  $s= v_K(g'(0))$ and choose an $n_0\ge R$ such that $\ell + n_0 e = m s$ for some positive integer $m$. By~\eqref{eq:computing widegree}, we have
\[i_{n_0+js}(\s) = \wi(g^{\circ (m+je)s}) - 1= q^{(m+je)s} - 1.
 \]
 Substituting $i_{n_0}(\s)$ and $i_{n_0+s}(\s)$ into the above equations and a little manipulation on algebraic identities, we obtain that
 \begin{equation}
 \label{eqn:in for sigma}
 i_n(\s) + 1 = q^{\ell + ne} \text{ for all $n\ge R$.}
 \end{equation}

 Notice that~\eqref{eqn:in for sigma} implies that $\log_q(i_n(\s) + 1) = v_K((u^{\circ p^n})'(0) - 1)$ for all $\s = \reduce{u}\in G^{(r)}$ and all $n\ge R$. On the other hand, for any given positive integer $n$ there exists  sufficiently large integer $r$ such that  $U^{(r)}\subseteq U^{p^n}$. That it, elements of $U^{(r)}$ are of the form $u^{\circ p^n}$ for some $u\in G.$ Therefore, by increasing $r$ if necessary, we may assume that the ramification number of every nontrivial $\s = \reduce{u} \in G^{(r)}$ satisfies $\log_q(i(\s) + 1) = v_K(u'(0) - 1).$ Then by the definition of  lower numberings,  for $t\ge 0$
\[
G^{(r)}[t] = \{\reduce{u}\in G^{(r)}\mid v_K(u'(0)-1) \ge \log_q(t+1)\}.
\]

Since  $G^{(\ell)} = \{\reduce{u}\mid v_K(u'(0)-1) \ge \ell\}$ for $\ell \ge r$, we see that $G^{(r)}[t] = G^{(\log_q(t+1))}.$  Now the isomorphism $G^{(\ell)} \simeq U^{(\ell)}$ shows that the filtration
\[
G^{(\ell)}\supsetneqq G^{(\ell+1)}\supsetneqq \cdots \supsetneqq  G^{(\ell+j)} \supsetneqq  \cdots
\]
is the same as that arising from ramification numbers of elements of $G^{(\ell)}$. Hence,
the breaks of the lower numberings of $G^{(r)}$ are exactly the set of $t$ such that $t = q^{\ell}-1$ for all  integers $\ell\ge r.$

Moreover, we have $[G^{(r)} : G^{(\ell)}] = [U^{(r)} : U^{(\ell)}] = q^{\ell-r}. $
Notice that  we also have $\left(G^{(\ell)}\right)^p = G^{(\ell + e)}.$ Since $G^{(\ell)} = G^{(r)}[q^\ell - 1]$, it follows that
\[
\left(G^{(r)}[q^{\ell} - 1]\right)^p = G^{(r)}[q^{\ell + e} - 1].
\]

To ease the notation in the computation below, we'll simply put $H = G^{(r)}.$
Applying the function $\p_{H}$ and Proposition~\ref{prop:p-adic Lie ext}, we have
\begin{align*}
e_{M'} & = \p_{H}\left(q^{\ell + e} - 1\right) - \p_{H} \left(q^{\ell}-1\right) \\
     & = \int_{q^{\ell}-1}^{q^{\ell+e}-1}\,\frac{d t}{\left[H : H[t]\,\right]} \\
     & = \sum_{j=0}^{e-1}\, \int_{q^{\ell+j}-1}^{q^{\ell+j+1}-1} \,\frac{d t}{\left[H : H[t]\,\right]}  \\
     & = \sum_{j=0}^{e-1} \, \frac{q^{\ell+j+1} - q^{\ell+j}}{\left[H : H[q^{\ell+j+1}-1]\,\right]}  \\
     & = \sum_{j=0}^{e-1} \, \frac{q^{\ell+j}(q - 1)}{q^{\ell+j+1-r}} \\
     & = q^{r-1}(q-1) e.
\end{align*}

As remarked in the beginning of the proof, $e_M = e_{M'}/[M' : M] = e_{M'}/[G : G^{(r)}].$ Now,  $[G : G^{(r)}] = [U_K : U^{(r)}] = q^{r-1}(q-1).$ Combining with the  formula for $e_{M'}$ obtained above, we conclude that $e_M = e$ as desired.
\end{proof}

\begin{remark}
\label{rmk:the same degree}
Since the two fields $K$ and $M$ have the same residue degrees and ramification indices, it follows that
both fields have the same degrees over $\QQ_p.$ That is, we have  $[M: \QQ_p] = e f = [K : \QQ_p].$
\end{remark}

Now we know that the extension $N/M$ corresponding to $\left(\laurent{\FF_q}{x}, G\right)$  under the functor
of the field of norms  is a
totally ramified  extension of $p$-adic field  with Galois group $G_{N/M} \simeq G$. Our next result gives a more precise description of the extension $N/M.$

\begin{proposition}
\label{prop:max totally ramf ext}
The extension $N/M$ is a maximal totally ramified abelian extension of $M.$
\end{proposition}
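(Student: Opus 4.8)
The plan is to combine local class field theory with the ramification data already assembled. Recall first the elementary remark that a totally ramified abelian extension $N/M$ of $p$-adic fields is a \emph{maximal} totally ramified abelian extension if and only if $N\cdot M^{\mathrm{ur}}=M^{\mathrm{ab}}$, where $M^{\mathrm{ur}},M^{\mathrm{ab}}$ denote the maximal unramified and maximal abelian extensions of $M$: since $N/M$ is totally ramified we have $N\cap M^{\mathrm{ur}}=M$, so restriction gives a surjection from the inertia subgroup $I_M=\Gal(M^{\mathrm{ab}}/M^{\mathrm{ur}})$ onto $\Gal(N/M)$, with kernel $\Gal(M^{\mathrm{ab}}/N\cdot M^{\mathrm{ur}})$; and if this kernel is trivial, then for any totally ramified abelian $N'\supseteq N$ one gets $\Gal(M^{\mathrm{ab}}/N')\cap I_M=1$ and $\Gal(M^{\mathrm{ab}}/N')\cdot I_M=\Gal(M^{\mathrm{ab}}/M)$, so that $\Gal(M^{\mathrm{ab}}/N')$ and $\Gal(M^{\mathrm{ab}}/N)$ both map isomorphically onto $\Gal(M^{\mathrm{ur}}/M)$ by restriction; being nested, they are equal, i.e. $N'=N$. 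Using the reciprocity isomorphism $U_M\cong I_M$ and the isomorphism $\Gal(N/M)\cong G\cong U_K$ established above, it therefore suffices to prove that the restriction map
\[
\rho\colon U_M\;\longrightarrow\;U_K
\]
is injective.

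Now $\rho$ is, by Herbrand's theorem together with the fact that unramified base change leaves the upper numbering unchanged, compatible with the upper ramification filtrations: $\rho(I_M(u))=\Gal(N/M)(u)$ for every $u$. On the source, local class field theory identifies $I_M(u)$ with the higher unit group $U_M^{(\lceil u\rceil)}$ (and $U_M$ for $u\le 0$); in particular the breaks of $I_M$ are exactly the integers $\ge 0$, the graded piece at $0$ has order $q-1$, and the graded pieces at the positive integers have order $q$. I claim that $\Gal(N/M)$ has \emph{the same} upper ramification filtration. Granting this, $\rho$ induces at each break a surjection of finite groups of equal order — hence an isomorphism — on graded pieces, and an immediate induction gives $\ker\rho\cap U_M^{(n)}\subseteq U_M^{(n+1)}$ for all $n\ge 0$, so $\ker\rho\subseteq\bigcap_n U_M^{(n)}=\{1\}$, finishing the proof as in the first paragraph. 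To identify the filtration of $\Gal(N/M)$: by the Hasse--Arf theorem its breaks are integers; since $\Gal(N/M)\cong U_K$, whose maximal prime-to-$p$ quotient has order $q-1$, its tame quotient has order $q-1$; and the computations in the proof of Proposition~\ref{prop:ramification index computation}, carried out at \emph{all} levels, give $G[q^{\ell}-1]=G^{(\ell)}$ for every $\ell\ge 1$, hence $\phi_G(q^{\ell}-1)=\ell$, so that the breaks of $N/M$ are precisely the non-negative integers with graded pieces of orders $q-1,q,q,\dots$, which is exactly the filtration of $U_M$.

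The main obstacle is this last step: extending the ramification computation of Propositions~\ref{prop:lubin char 0}--\ref{prop:ramification index computation} from the finite-index subgroup $G^{(r)}$ (where only sufficiently high powers of the elements are tracked) to the whole of $G$. Concretely, one needs the identity $i(\reduce{u})+1=q^{\,v_K(u'(0)-1)}$ for \emph{every} $u\in\GG=\Stab_{\Ocal_K}(g)$ with $u'(0)\equiv 1\ (\mathrm{mod}\ \Mcal_K)$, not merely for $u\in\GG^{(r)}$; I expect this to follow from Lemma~\ref{lem:height} and Lemma~\ref{lem:findi_n} applied directly to $u$ rather than to a high iterate. Note that from Proposition~\ref{prop:ramification index computation} alone one only knows that $\Gal(N/M)$ has the desired behaviour beyond some break, and a counting argument (each wild break contributes a graded piece of order at most $q=\#\,\reduce{M}^{+}$, and there are at most $n-1$ wild breaks below $n$) shows that \emph{once} $[\,G:G(n)\,]=(q-1)q^{\,n-1}$ holds for all large $n$, then all positive integers are breaks with graded pieces of order exactly $q$; what remains is precisely to rule out a global shift, equivalently to establish $U_M\cong U_K$. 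An alternative route, which bypasses the difficulty, is to identify $N$ with the extension of $K$ generated by the roots of the iterates of $g$ — so that $M\cong K$ and $U_M\cong U_K$ literally — using the invariants $e_M=e$ and $\reduce{M}=\FF_q$ already computed together with the functoriality of the field of norms. In either case, once $\ker\rho=1$ one obtains $N\cdot M^{\mathrm{ur}}=M^{\mathrm{ab}}$, and hence $N/M$ is a maximal totally ramified abelian extension of $M$.
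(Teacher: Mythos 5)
Your opening reduction is exactly the paper's: both arguments embed $N$ into the maximal totally ramified abelian extension $N'/M$ (a Lubin--Tate tower), so that restriction gives a surjection $U_M\cong\Gal(N'/M)\twoheadrightarrow\Gal(N/M)\cong U_K$, and maximality is equivalent to this surjection having trivial kernel $W$. The divergence, and the gap, lies in how you propose to prove injectivity. Your route needs the upper ramification filtration of $\Gal(N/M)\cong G$ at \emph{every} level --- concretely the identity $i(\reduce{u})+1=q^{\,v_K(u'(0)-1)}$ for every $u\in\GG^{(1)}$, not merely for $u\in\GG^{(r)}$ with $r$ large. You acknowledge this and ``expect'' it to follow from Lemma~\ref{lem:height} and Lemma~\ref{lem:findi_n} applied to $u$ itself rather than to a high iterate; but both lemmas are intrinsically asymptotic (Lemma~\ref{lem:findi_n} computes $i_n(\s)$ only for $n\ge R$ with $R$ depending on $g$), and nothing established in the paper --- Sen's congruences, or the relation $G(y)^p=G(y+e)$, which again holds only for $y$ large --- pins down the low-level breaks $i_0(\s),\dots,i_{R-1}(\s)$. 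The ``global shift'' you mention is therefore a live possibility your argument does not exclude: a priori the element with $v_K(u'(0)-1)=1$ could have $i(\reduce{u})$ much larger than $q-1$, making $\rho$ fail to be injective on the torsion of $U_M$ without contradicting any established asymptotics. Your fallback (deducing $M\cong K$ from $e_M=e$ and $\reduce{M}=\FF_q$) also does not close the gap: $e$ and $f$ do not determine a $p$-adic field up to isomorphism, and even an abstract isomorphism $U_M\cong U_K$ would not kill a finite $p$-torsion kernel unless one also knew $\mu_{p^\infty}(M)=\mu_{p^\infty}(K)$.

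The paper closes this hole using only large-$n$ data, which is what is actually available. It invokes \cite[Proposition~3.10]{LMS}, which gives an exact formula for the lower ramification numbers $i_n(\g_a)$ of the restriction to $N$ of a Lubin--Tate automorphism attached to $a\in U_M$, expressed in terms of the hypothetical kernel $W$ of order $p^{\ell}$ (via $h_n=\max_{w\in W}v_M(a^{p^n}-w)$ and a constant $C_W$). Comparing this, as $n\to\infty$, with the formula $i_n(\g_a)=q^{t_b+ne}-1$ already proved in Proposition~\ref{prop:ramification index computation} yields an identity that can hold for all large $n$ only if $\ell=0$. So the correct fix for your proposal is not to extend the filtration computation down to level $0$ (which the available lemmas cannot do), but to import a formula that tracks the effect of a nontrivial $W$ on the ramification numbers and derive a contradiction asymptotically.
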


\begin{proof}
As $N$ is a  totally ramified abelian extension of $M$, by local class field theory $N/M$ is a subextension of a maximal
totally ramified  abelian extension $N'/M$  of $M$. From the theory of Lubin-Tate formal groups, we know that $N'$ is the field generated by all the $\pi_M^n$-torsion points (for all $n\in \NN$) over $M$ of a Lubin-Tate formal group $F(x,y)\in\series{\Ocal_M}{x,y}$ corresponding to a uniformizer $\pi_M$ of $M.$

Recall that the formal group $F(x,y)$ gives rise to  an isomorphism  $U_M \to G_{N'/M}$ where $U_M$ denotes the units group of $\Ocal_M.$ On the other hand,  the restriction map $\s\mapsto \s|_N$ for  $\s\in G_{N'/M}$ induces  a  surjective continuous group homomorphism $G_{N'/M}\to G_{N/M}.$  Composing with the isomorphisms $U_M\simeq G_{N'/M}$ and $G_{N/M}\simeq G\simeq U_K$, we obtain a surjective  homomorphism  $\th: U_M \twoheadrightarrow U_K$ of $\ZZ_p$-modules.  Our goal is to show that the kernel of $\th$, denoted by $W$, is trivial.

By Remark~\ref{rmk:the same degree},  the two $\ZZ_p$-modules $U_K$ and $U_M$  are of rank $d$.
Therefore $W$ must be a finite subgroup of $U_M.$  Since both $K$ and $M$ have the same residue degrees, the prime-to-$p$ part of the torsion groups of $U_K$ and $U_M$ have the same cardinality. If $W$ is not trivial then it
must be a finite $p$-group. Let the order of $W$ be $p^{\ell}$ for some $\ell\ge 0.$

For $a\in U_M$, we denote by $\gfk_{a} \in G_{N'/M}$ corresponding to $a$.  Notice that the isomorphism $G_{N/M}\simeq G$ is the one given by the field of norms functor $X_M(\cdot).$ Let $\g_a = X_M(\gfk_a|_N)$ be the image in $G$ of the restriction of $\gfk_{a}$ on $N.$ The  lower numbering $i(\g_a)$ is computed in~\cite[Proposition~3.10]{LMS}. We will show that $\ell = 0$ by comparing the lower numberings $i_n(\g_a)$ obtained in~\cite{LMS} and the one computed in Proposition~\ref{prop:ramification index computation}. For our purpose, it suffices to consider a special case as follows.

We set $h_n = \max_{w\in W}(v_M(a^{p^n} - w))$ for positive integer $n.$ Then~\cite[Proposition~3.10]{LMS} implies that there is a positive integer $e'$ depending on $W$ such that for $n$ large enough, we have
\begin{align*}
i_n(\g_a) & = \frac{q^{h_n} -p^{\ell}}{p^{\ell}} + C_W, \quad \text{where} \\
C_W & = \begin{cases}
\frac{p-1}{p}\sum_{\nu = 0}^{\ell-1} \frac{q^{e' p^\nu}}{p^\nu} & \text{if $\ell \ge 1$, }\\
    0 & \text{otherwise.}
\end{cases}
\end{align*}

By raising $a$ to a suitable power, we may assume that  $\g_a\in G^{(r)}$ and  $t_a = v_M(a -1) = \max_{w\in W}(v_M(a - w)) .$ Then, $h_n = v_M(a^{p^n} - 1) = t_a + n v_M(p) = t_a + n e.$ Recall that  $\g_a(x) = \reduce{u}(x)$ for some $u\in \GG$ with $u'(0) = b\in U_K.$   As $\g_a \in G^{(r)}$, by the definition of $G^{(r)}$ we also have that $b\in U_K^{(r)}.$  Let $t_b = v_K(b -1).$ Then~\eqref{eqn:in for sigma} implies that $i_n(\g_a) = q^{t_b + ne} - 1$ for $n$ sufficiently large. If $\ell \ne 0$,  combining both  results we see that for  $n$ sufficiently large
\[
q^{t_b + ne} - 1 = \frac{q^{t_a + ne} - p^{\ell}}{p^{\ell}} + \frac{p-1}{p}\sum_{\nu = 0}^{\ell-1} \frac{q^{e' p^\nu}}{p^\nu} .
\]
Equivalently,
\[
q^{ne}\left(q^{t_b} - \frac{q^{t_a}}{p^\ell}\right) = \frac{p-1}{p}\sum_{\nu = 0}^{\ell-1} \frac{q^{e' p^\nu}}{p^\nu}.
\]
This can not hold for all large $n$. Therefore, $\ell = 0$ and consequently $N' = N$ as desired.
\end{proof}

\begin{remark}
If $e =1$ then both $K$ and $M$  are unramified extension of degree $f$ over $\QQ_p$. It follows that $M = K$ in this case. Then, Proposition~\ref{prop:max totally ramf ext} follows from local class field theory directly.
\end{remark}

\subsection{The proof}
\label{subsec:pf Lubin conj}

We begin to prove Theorem~\ref{thm:lubin's conjecture}. Recall that we're given a stable series $g\in \Dcal_0(\Ocal_K)$ such that  $\GG = \Stab_{\Ocal_K}(g)$ satisfying $\partial_0(\GG) = U_K$. For the abelian group $G = \reduce{\GG}$, the APF extension  $N/M$
corresponding to $\left(\laurent{\FF_q}{x}, G\right)$ under the field of norms functor is a maximal totally ramified abelian extension of $p$-adic  local fields. Furthermore, the residue degree and ramification index of  $M$ are the same as that of $K.$

As in the proof of Proposition~\ref{prop:max totally ramf ext}, $N$ is the field generated by  torsion points over $M$ of a Lubin-Tate formal group law $F(X,Y)\in \series{\Ocal_M}{X,Y}$ associated to a uniformizer $\omega$ of $M$. It is well-known that the formal group law $F(X,Y)$ induces an isomorphism $\r_F : \Ocal_M\to \End(F(X,Y))$ such that for $\a\in \Ocal_M$ the image of $\a$ under $\r_F$ is the unique endomorphism $[\a]_F$ of $F(X,Y)$ satisfying $\partial_0([\a]_F) =  \a.$  Furthermore, via the formal group law $F(X,Y)$ the isomorphism $G_{N/M}\simeq U_M$ can be explicitly given as follows.

For $\t\in G_{N/M}$, there exists a unique  $\a\in U_M$ such that for any positive integer $n$ and any  $\omega^n$-torsion $\l$ of $F(X,Y)$ we have $\t(\l) = [\a]_F(\l).$ On the other hand, the result of~\cite[Lemma~3.2]{LMS} says that under the field of norms functor, for $\t\in G_{N/M}$ corresponding to $[\a]_F$ we have that $X_M(\t) = \reduce{[\a]_F} \in \Aut(X_M(N)).$ Here we also denote by $\reduce{[\a]_F}(x)$ the reduction of the formal power series $[\a]_F(x)\in \series{\Ocal_M}{x}$ modulo $\Mcal_M.$

By the construction of the field of norms (see \S~\ref{subsec:field of norms}), a uniformizer $x$ of $X_M(N)\simeq \laurent{\FF_q}{x}$ corresponds to a sequence of norm-compatible elements $(\pi_{M'})_{M'}$ where $M'$ runs through finite subextensions of $N/M$ with $\pi_{M'}$ a uniformizer in $M'.$  As in the proof of~\cite[Th\'{e}or\`{e}m~4.5, (iv)$\Rightarrow$ (v)]{LMS}, we may choose the Lubin-Tate formal group law $F(X,Y)$ to be the one corresponding to $\pi_M$.  Moreover, the same proof also shows  that $F(X,Y)$ can be chosen such that
$G =  \{\reduce{[\a]_F}(x) \mid \a \in U_M\}.$
In other words, if we let $\Fcal(X,Y)\in \series{\FF_q}{X,Y}$ be the reduction of the Lubin-Tate formal group $F(X,Y)$ modulo $\Mcal_M$, then
the group $G$ is a closed subgroup of $\Aut(\Fcal(X,Y))$.(Although~\cite[Th\'{e}or\`{e}m~4.5]{LMS} treats the case where $M=\QQ_p$ and $G_{N/M}\simeq \ZZ_p$ only, the arguments can be generalized to our case without difficulty).

Let $s = v_K(g'(0)) \ge 1.$ Then  $\wi(g)= p^{sf} = q^s$ and $\reduce{g}(x)=\gamma(x^{q^s})$ for some invertible series $\gamma(x)\in \series{\FF_q}{x}$ (\cite[Corollary 6.2.1]{Lub}). We claim that there exists an invertible power series $u(x) \in \GG$ such that $\reduce{u}(x) = \g(x).$

Notice that the reduction (modulo $\Mcal_M$) map  induces an injective ring homomorphism $\r_{\Fcal}: \Ocal_M\hookrightarrow \End(\Fcal)$ such that $G$ is the units group of $\r_{\Fcal}(\Ocal_M).$
For any  $\s\in G$ we have $\g\circ \s = \s\circ \g$ as well since $\s\circ \reduce{g} = \reduce{g}\circ \s$ and $\s$ commutes with the $q$-th power Frobenius map.  In particular, $\g$ commutes with any non-torsion element of $G$.  By~\cite[Theorem 6]{LubSar}, we conclude that
$\g \in \End(\Fcal).$ So, $\g$ is in the centralizer of $G$ in $\End(\Fcal).$  From this, it's not hard to deduce that $\g$ is in the centralizer of $\r_{\Fcal}(\Ocal_M)$ in $\End(\Fcal).$ It follows that $M(\g)$ is an extension field of $M$ contained in $D = \End(\Fcal)\otimes \QQ_p.$
On the other hand,  the endomorphism ring of $\Fcal(x,y)$ is a maximal order in the division algebra $D$ of rank $d^2$ over $\QQ_p$ (\cite[Theorem 20.2.13]{Haz}). Since $M$ is of degree $d$ over $\QQ_p,$ it is a maximal field contained in $D.$ Thus,  we must have $M(\g) = M.$ This implies that $\g$ is actually contained in $\r_{\Fcal}(\Ocal_M)$ and in fact, it is contained in $G$. Therefore, there exists a formal power series $u(x)\in \GG$  such that $\reduce{u}(x) = \g(x)$ as  claimed.

 The remaining proof is split into two cases according to: (i) $s = 1$ and (ii) $K$ is unramified over $\QQ_p$ ($e =1 $) .
\medskip
\\
{\bf Case (I) $s = 1$ :}
Put $h=g\circ u^{\circ (-1)}$. Then $h(x)\equiv x^q\pmod{\Mcal_K}$ and $v_K(h'(0))=v_K(g'(0))=1$. By Lubin-Tate theory, the formal power series $h(x)$ is an endomorphism of a Lubin-Tate formal group $G(x,y)$ over $\Ocal_K$, and as $g(x)$  commutes with $h(x)$, it is  an endomorphism of $G(x,y)$ as well. This complete the proof of the first case of Theorem~\ref{thm:lubin's conjecture}.
\medskip
\\
{\bf Case (II) The unramified case ($e = 1$):}
Notice that we have $M = K$  in this case since both $M$ and  $K$ are unramified extensions of the same degree over $\QQ_p$.
The embedding $\r_{\Fcal} : \Ocal_K \into \End(\Fcal(X,Y))$ makes $\Fcal(X,Y)$ into a formal $\Ocal_K$-module, denoted by $(\Fcal(X,Y), \r_{\Fcal})$, of $\Ocal_K$-height 1. (see~\cite[\S~21.8.2]{Haz} for a definition).
Since $\reduce{g}(x) = \g(x^{q^s})$ and $\g$ as well as the $q^s$-power map are endomorphisms of $\Fcal(x,y)$,
we conclude that $\reduce{g}$ is an endomorphism of $\Fcal(X,Y)$ which commutes with elements of $\r_{\Fcal}(\Ocal_K).$
It follows from the same arguments as above that in fact  $\reduce{g}\in \r_{\Fcal}(\Ocal_K).$
Hence there exists a unique $\b\in \Ocal_K$ such that $\reduce{g} = [\b]_{\Fcal}.$
Our goal is to show that there exists an invertible power series $\p(x)\in \series{\Ocal_K}{x}$ such that $\reduce{\p}(x) = x$ and $g(x)$ is an endomorphism of  the Lubin-Tate formal group law $F^{\p}(X,Y) = \p^{-1}(F(\p(X), \p(Y))).$

To achieve that goal we apply the lifting techniques  studied in~\cite{li2} to our situation.  We start with  the basic setting in~\cite{li2}. Let  $\zeta$ and $ \mu$ be fixed noninvertible and invertible endomorphism of $\Fcal(x,y)$ respectively such that $\z\circ \m = \m\circ\z .$ For every positive integer $r$, we write $M_r=\Mcal_K^r/\Mcal_K^{r+1}$ which is a one dimensional vector space over $\FF_q.$ Let's recall the definitions for the   $\FF_q$-subspaces $Z_r(\zeta,\mu), Z_r^o(\zeta,\mu)$ and $B_r(\zeta,\mu)$  of $\series{M_r}{x}\oplus \series{M_r}{x}$ in the following.
\begin{align*}
Z_r(\zeta,\mu) & =\{(d,w)\in\series{M_r}{x}\oplus \series{M_r}{x} \,:\, d(\mu(x))= d(x)\cdot
\mu'(\zeta(x))+ w(\zeta(x))\}. \\
Z_r^o(\zeta,\mu) & = \{(d,w)\in Z_r(\zeta,\mu) \,:\,d'(0)=w'(0)=0\; \text{ in $M_r$} \}. \\
B_r(\zeta,\mu) & = \{(d, w) \in Z_r(\zeta,\mu)  \,:\, d(x)= -\theta(\zeta(x)),
     \text{ for some $\theta(x)\in x^2 \series{M_r}{x}$} \}.
\end{align*}

We remark that if  $(d, w) \in Z_r(\z,\m)$  such that $d(x) = - \th(\z(x))$ for some $\th(x)\in x^2\series{M_r}{x},$  then
\begin{align*}
w(\z(x)) & = \m'(\z(x))\cdot \th(\z(x)) - \th(\z(\m(x))) \\
   & = \m'(\z(x)) \cdot \th(\z(x)) - \th(\m(\z(x))).
   \end{align*}
Consequently,
\begin{equation}
\label{eqn:fun eq for B_r}
w(x) = \m'(x)\cdot\th(x) -\th(\m(x))
\end{equation}
and  $B_r(\z, \m)$ is a subspace of $Z_r^o(\z,\m).$

A pair of formal power series $(f, u)\in \Dcal_0(\Ocal_K)\oplus \Gcal_0(\Ocal_K)$ is said to be a {\em lifting} of $(\zeta,\mu)$ provided that  $f\circ u=u\circ f$ and $\reduce{f}(x)=\zeta(x)$ and $\reduce{u}(x)= \mu(x)$. Notice that the definition for lifting given here is slightly different from the one  in~\cite{li2}.  Suppose that $(f_1,u_1)$ and $(f_2,u_2)$ are two liftings  of $(\zeta,\mu)$ such that
 \[f_1\equiv
f_2\pmod{\Mcal_K^r}\mbox{ and }u_1\equiv u_2\pmod{\Mcal_K^r}\]
then  $(f_1-f_2,u_1-u_2) \in Z_r(\z,\m).$

A result of~\cite[Section 3.2]{Sarkis05} says that there exists a  $\p_r(x)\in \Gcal_0(\Ocal_K)$ such that  $\phi_r(x)\equiv x \pmod{\Mcal_K^{r}}$ and
\[\phi_r\circ f_1\equiv f_2\circ \phi_r\pmod{\Mcal_K^{r+1}}\quad\mbox{and}\quad\phi_r\circ u_1\equiv u_2\circ \phi_r\pmod{\Mcal_K^{r+1}}\]
if and only if  $(f_1-f_2,u_1-u_2)\in B_r(\zeta,\mu)$. In fact, we have $f_1(x) - f_2(x) \equiv -\th_r(\z(x))\pmod{\Mcal_K^{r+1}}$ where $\th_r(x)$ is any power series in $x^2 \series{M_r}{x}$ such that $\th_r(x) \equiv \p_r(x) - x\pmod{\Mcal_K^{r+1}}$ (see~\cite[p.~141]{Sarkis05}).

We set  $\z(x) = [\b]_{\Fcal}(x).$ Clearly  $\reduce{[\b]_{F}}(x)  = \z(x) = \reduce{g}(x)$. For any $\a\in U_K$, we let  $u_\a(x) \in \GG$ be the unique power series such that $\reduce{u_\a} = [\a]_{\Fcal}.$  By setting $\m_\a(x) = [\a]_{\Fcal}(x)$, we now have two lifting   of $(\z,\m_\a).$ Namely, $([\b]_F, [\a]_F)$ and $(g, u_\a).$  Our goal is to show that for appropriate $\a\in U_K$,  we have $([\b]_F - g, [\a]_F - u_\a) \in B_r(\z, \m_\a)$ for all positive integer $r.$ From this, it's not difficult to deduce  the existence of an invertible power series  $\p(x)$ which gives the formal group $F^{\p}(x,y)$ such that $g(x)$ as well as all $u(x) \in \GG$ are endomorphisms of $F^{\p}(x,y).$

A key ingredient that we'll need is the following lemma whose proof will be postponed to the end of this section.
\begin{lemma}
\label{lem:same}
 Let $\alpha_1,\alpha_2,\beta\in \Ocal_K$ be such that  $0<v_K(\alpha_1-1)<v_K(\beta)\le v_K(\alpha_2-1)$.
  Suppose that we are given stable noninvertible power series $f(x)\in \Dcal_0(\Ocal_K)$ and invertible power series $,u_1(x),u_2(x)\in \Gcal_0(\Ocal)$ such that they commute with each other under the operation of substitution. Furthermore,  suppose that there is a positive integer $r$ such that $f\equiv [\beta]_F\pmod{\Mcal_K^r}$ and $u_i\equiv [\alpha_i]_F\pmod{\Mcal_K^r}$, for $i=1,2$. Then $f'(0)\equiv \beta\pmod{\Mcal_K^{r+1}}$ and  $u_2'(0)\equiv\alpha_2\pmod{\Mcal_K^{r+1}}$.
\end{lemma}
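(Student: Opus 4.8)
The plan is to run the lifting machinery of the spaces $Z_r(\zeta,\mu)$ recalled above, together with the fact that the reduced Lubin--Tate module $\Fcal$ has $\Ocal_K$-height one. Since $f\equiv[\beta]_F$ and $u_i\equiv[\alpha_i]_F\pmod{\Mcal_K^r}$ with $r\ge 1$, reduction modulo $\Mcal_K$ gives $\zeta:=\reduce f=\reduce{[\beta]_F}=[\beta]_{\Fcal}$ and, for $i=1,2$, $\mu_i:=\reduce{u_i}=\reduce{[\alpha_i]_F}=[\alpha_i]_{\Fcal}$; thus $(\zeta,\mu_i)$ is a commuting pair, and both $(f,u_i)$ and $([\beta]_F,[\alpha_i]_F)$ are liftings of it which agree modulo $\Mcal_K^r$. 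Consequently the pair $(d,w_i):=\bigl(f-[\beta]_F,\,u_i-[\alpha_i]_F\bigr)$, read modulo $\Mcal_K^{r+1}$, lies in $Z_r(\zeta,\mu_i)$ for $i=1,2$; here $d$ is common to both, and $d,w_i$ have no constant term. Note that the first conclusion of the lemma is exactly $d'(0)=0$ in $M_r$, and the second is $w_2'(0)=0$ in $M_r$.

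I first record the ramification data. Because $\Fcal$ has $\Ocal_K$-height one, $\ord_x\bigl([\gamma]_{\Fcal}(x)\bigr)=\wi\bigl([\gamma]_{\Fcal}\bigr)=q^{v_K(\gamma)}$ for every nonzero $\gamma\in\Ocal_K$. Applying this to $\gamma=\beta$ gives $N:=\ord_x\zeta=\wi(f)=q^{v_K(\beta)}$ (in particular $\zeta\ne 0$, so $\beta\ne 0$). Applying it to $\gamma=\alpha_i-1$, together with the identity $[\alpha_i-1]_{\Fcal}(x)=\Fcal\bigl(\mu_i(x),[-1]_{\Fcal}(x)\bigr)$ and the elementary remark that $\ord_x\bigl(a(x)-b(x)\bigr)=\ord_x\Fcal\bigl(a(x),[-1]_{\Fcal}(b(x))\bigr)$ for $a,b\in x\series{\FF_q}{x}$ (since $\Fcal(X,Y)-X-Y\in(XY)$), gives
\[
\ord_x\bigl(\mu_i(x)-x\bigr)=i(\mu_i)+1=q^{v_K(\alpha_i-1)}\qquad(i=1,2).
\]
Hence the hypotheses $v_K(\alpha_1-1)<v_K(\beta)\le v_K(\alpha_2-1)$ translate precisely into $\ord_x(\mu_1(x)-x)<N$ and $\ord_x(\mu_2(x)-x)\ge N$.

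To prove $d'(0)=0$, I would rewrite the defining relation of $Z_r(\zeta,\mu_1)$ as
\[
d(\mu_1(x))-d(x)=d(x)\bigl(\mu_1'(\zeta(x))-1\bigr)+w_1(\zeta(x)).
\]
Since $\mu_1(x)=x+\cdots$ and $\zeta(x)$ has order $N$, the right-hand side has $x$-order at least $N$: the first summand has order $\ge 1+i(\mu_1)N\ge 1+N$ and the second has order $\ge N$. If $d'(0)\ne 0$, then $d(\mu_1(x))-d(x)=d'(0)\cdot(\mu_1(x)-x)+(\text{higher order})$, which has order exactly $\ord_x(\mu_1(x)-x)=q^{v_K(\alpha_1-1)}<N$; this contradicts equality of the two sides. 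Hence $d'(0)=0$, i.e.\ $f'(0)\equiv\beta\pmod{\Mcal_K^{r+1}}$.

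Finally, using $d'(0)=0$, I would rewrite the relation of $Z_r(\zeta,\mu_2)$ as
\[
d(\mu_2(x))-d(x)-d(x)\bigl(\mu_2'(\zeta(x))-1\bigr)=w_2(\zeta(x)).
\]
Because $d$ now has no linear term, $d(\mu_2(x))-d(x)$ has order $\ge\ord_x(\mu_2(x)-x)+1\ge N+1$, and $d(x)\bigl(\mu_2'(\zeta(x))-1\bigr)$ has order $\ge 2+i(\mu_2)N\ge 2+(N-1)N\ge N+1$; so the left-hand side has order $\ge N+1$. On the other hand $w_2(\zeta(x))=w_2'(0)\,\zeta(x)+(\text{order}\ge N+1)$ and the coefficient of $x^{N}$ in $\zeta(x)$ is a unit of $\FF_q$, so comparing coefficients of $x^{N}$ forces $w_2'(0)=0$, i.e.\ $u_2'(0)\equiv\alpha_2\pmod{\Mcal_K^{r+1}}$ (if $\alpha_2=1$ the relation already reads $w_2(\zeta(x))=0$, giving $w_2'(0)=0$ directly). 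The step I expect to be the main obstacle is exactly this passage from the first conclusion to the second: one needs the error terms in the $\mu_2$-equation to have order strictly larger than $N$ rather than merely $\ge N$, and that extra unit of order is precisely what the vanishing $d'(0)=0$ buys; a secondary point requiring care is the identification $\ord_x(\mu_i(x)-x)=q^{v_K(\alpha_i-1)}$, which is where $\Ocal_K$-height one of $\Fcal$ is used.
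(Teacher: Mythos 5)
Your proposal is correct and follows essentially the same route as the paper: both arguments feed the two liftings into the functional equation defining $Z_r(\zeta,\mu_i)$, use the $\Ocal_K$-height-one structure of $\Fcal$ to pin down $\ord_x\zeta=q^{v_K(\beta)}$ and $\ord_x(\mu_i(x)-x)=q^{v_K(\alpha_i-1)}$, and then compare leading terms --- first against $\mu_1$ to force $d'(0)=0$, then against $\mu_2$ to force $w_2'(0)=0$. The only cosmetic difference is that the paper first normalizes to the Lubin--Tate group with $[\Pi]_F(x)=\Pi x+x^q$ and reads off $[\alpha_i]_{\Fcal}(x)=x+c_ix^{q^{t_i}}+\cdots$ explicitly, whereas you obtain the same order data via formal subtraction; the content is the same.
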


We apply~Lemma~\ref{lem:same}  to our situation.  Let $v_K(\beta)=s = v_K(g'(0))\ge 1$. Since the case $s=1$ has already been treated, we'll assume $s\ge 2$ in the following. Put $\alpha_1=1+p^{s-1}$ and $\alpha_2=1+p^s$. Let $u_1(x),u_2(x)\in \GG$ be such that  $\reduce{u_1}(x)=\m_1(x) = [\alpha_1]_\Fcal(x)$ and $\reduce{u_2}(x)=\m_2(x) = [\alpha_2]_\Fcal(x).$ We have the following.

\begin{claim}
For every positive integer $r$, there exists a power series $\ps_r\in \Gcal_0(\Ocal_K)$ with $\ps_r(x) \equiv x\pmod{\Mcal_K^r}$ such  that
\begin{align*}
\psi_r\circ g \circ\psi_r^{-1} & \equiv [\b]_F \pmod{\Mcal_K^{r+1}}, \\
 \psi_r\circ u_i \circ \psi_r^{-1} & \equiv [\a_i]_F \pmod{\Mcal_K^{r+1}},\; i = 1, 2.
\end{align*}
\end{claim}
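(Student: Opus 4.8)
The plan is to prove the Claim by induction on $r$, using the deformation-theoretic machinery set up just above: the exact sequences involving $Z_r(\z,\m_\a)$, $B_r(\z,\m_\a)$ and the Sarkis lifting result that a pair of liftings of $(\z,\m_\a)$ can be synchronized modulo $\Mcal_K^{r+1}$ precisely when their difference at level $r$ lies in $B_r(\z,\m_\a)$. The base case is immediate, since both liftings have the same reduction. For the inductive step I would assume $\ps_{r-1}$ has been constructed and set $f = \ps_{r-1}\circ g\circ\ps_{r-1}^{-1}$ and $u_i = \ps_{r-1}\circ u_i\circ\ps_{r-1}^{-1}$, so that $f\equiv[\b]_F$ and $u_i\equiv[\a_i]_F\pmod{\Mcal_K^r}$; then $(f-[\b]_F, u_i-[\a_i]_F)\in Z_r(\z,\m_{\a_i})$. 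The task is to correct $\ps_{r-1}$ by a power series congruent to $x$ mod $\Mcal_K^r$ so that these differences move into $B_r$; equivalently, one must show the classes of these $Z_r$-elements in $Z_r/B_r$ vanish.

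\textbf{Key steps.} First I would record the structure of $Z_r(\z,\m_\a)/B_r(\z,\m_\a)$: the quotient injects via the projection $(d,w)\mapsto w$ (or via the ``derivative at $0$'' map $(d,w)\mapsto (d'(0),w'(0))$ in $M_r\oplus M_r$), using equation~\eqref{eqn:fun eq for B_r} and the fact that $B_r$ is a subspace of $Z_r^o$. The crucial arithmetic input is Lemma~\ref{lem:same}: with $v_K(\a_1-1) = s-1 < s = v_K(\b) \le s = v_K(\a_2-1)$ (valid since $\a_1 = 1+p^{s-1}$, $\a_2 = 1+p^s$, and $e=1$ forces $v_K(p^j)=j$), the lemma gives $f'(0)\equiv\b$ and $u_2'(0)\equiv\a_2\pmod{\Mcal_K^{r+1}}$. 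Hence the level-$r$ difference $(f-[\b]_F, u_2-[\a_2]_F)$ has vanishing derivative at $0$, i.e.\ lies in $Z_r^o(\z,\m_{\a_2})$. Then I would invoke the cohomological computation from~\cite{li2} (this is exactly the content for which the $Z_r^o/B_r$ groups were introduced) showing that for the endomorphism $\z=[\b]_\Fcal$ of the height-one formal $\Ocal_K$-module $\Fcal$, the relevant quotient $Z_r^o(\z,\m_{\a_2})/B_r(\z,\m_{\a_2})$ is trivial — so the difference is a coboundary and Sarkis's result produces a correction $\ps_r = \eta_r\circ\ps_{r-1}$ with $\eta_r\equiv x\pmod{\Mcal_K^r}$ doing the job for $u_2$; the simultaneous synchronization of $g$ and $u_1$ (the remaining components) follows because the correcting $\th_r$ is determined by the common invertible generator and commutes with the whole picture, or alternatively by running the same argument with $\a_1$ in place of $\a_2$ and checking the two corrections are compatible.

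\textbf{Main obstacle.} The delicate point is handling \emph{all} the power series $g, u_1, u_2$ simultaneously with a single correction $\ps_r$ — the lifting result of Sarkis, as stated, synchronizes one commuting pair at a time, and one must check that correcting for $(g,u_2)$ does not spoil the congruence for $u_1$ (and that the $B_r$-membership needed is the same $\th_r$ for all pairs). This is where the identity~\eqref{eqn:fun eq for B_r}, which expresses the $w$-component purely in terms of $\m$ and $\th$, is essential: it shows the correction $\th_r$ is governed by the module structure rather than by the individual $u_i$, so once $\th_r$ is chosen to work for the generator $u_2$ of $U^{(r)}$-type (largest valuation of $\a_i - 1$), it automatically works for $g = [\b]$ and for $u_1$. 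I expect the bookkeeping around which $\a_i$ controls the correction — precisely the hypothesis $v_K(\a_1-1) < v_K(\b)\le v_K(\a_2-1)$ in Lemma~\ref{lem:same} — to be the subtle part, and the rest to be a routine induction assembling the $\ps_r$ as an infinite composition converging $\Mcal_K$-adically.
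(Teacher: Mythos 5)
Your overall architecture matches the paper's proof: induct on $r$, use Lemma~\ref{lem:same} to push the difference of the two liftings into $Z_r^o$, quote the vanishing of $Z_r^o/B_r$ for the height-one formal $\Ocal_K$-module to land in $B_r$, invoke Sarkis's synchronization result to produce the correction $\p_r$, and observe that a single correction handles $g$, $u_1$, $u_2$ simultaneously because membership in $B_r(\z,\m_i)$ is a condition on the $d$-component alone (it must equal $-\th_r(\z(x))$), so the same $\th_r$ works for every commuting partner. That last point, which you flagged as the main obstacle, you resolve essentially as the paper does.

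However, there is a genuine gap at the cohomological step. You invoke the vanishing of $Z_r^o(\z,\m_{\a_2})/B_r(\z,\m_{\a_2})$ for $\m_{\a_2}=[\a_2]_{\Fcal}$ with $\a_2=1+p^s$. But $\a_2$ lies in $\ZZ_p$, so $\ZZ_p[\a_2]=\ZZ_p\ne\Ocal_K$ whenever $f>1$, and the result you want to cite (\cite[Corollary~9]{li2}) gives $\dim_k\bigl(Z_r^o(\z,\m)/B_r(\z,\m)\bigr)=0$ only when the invertible multiplier generates $\Ocal_K$ over $\ZZ_p$; a cocycle for the pair $(\z,[\a_2]_\Fcal)$ need only respect the $\ZZ_p$-structure, not the full $\Ocal_K$-module structure, so the quotient you use cannot be expected to vanish in the unramified case $f>1$ (the only case at issue beyond Sarkis's $K=\QQ_p$). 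The paper's proof repairs exactly this: it introduces the torsion element $u_\om\in\GG$ with $u_\om'(0)=\om$ a primitive $(q-1)$-th root of unity, checks $\reduce{u_\om}=[\om]_{\Fcal}$, and replaces the pair $(g,u_2)$ by $(g,\,u_2\circ u_\om)$ with multiplier $\om\a_2$, for which $\Ocal_K=\ZZ_p[\om\a_2]$; only then does the vanishing of $Z_r^o(\z,[\om\a_2]_{\Fcal})/B_r(\z,[\om\a_2]_{\Fcal})$ apply. The resulting $\th_r$ is then transported back to $u_1$ and $u_2$ by the $d$-component argument you already describe. Without the insertion of $u_\om$ your induction step does not close for $f>1$, which is precisely the new content of this Claim relative to the height-one case over $\ZZ_p$.
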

We prove the claim by induction on $r.$
First we notice that $\a_1, \a_2$ and $\b$ as well as $u_1(x), u_2(x)$ and $g(x)$ satisfy the conditions in Lemma~\ref{lem:same} for $r =1$. Hence we conclude that $g'(0)\equiv \beta\pmod{\Mcal_K^{2}}$ and $u_2'(0)\equiv\alpha_2\pmod{\Mcal_K^{2}}$. Recall that $K$ is unramified over $\QQ_p$ of degree $f.$ Therefore it contains the group of $(q-1)$-roots of unity.
Let $\omega \in U_K$ be a fixed primitive $(q-1)$-th root of unity and let $u_\om(x)\in \GG$ be the unique invertible power series such that $u_\om'(0) = \om$.  Hence $u_\om$ is a generator for the cyclic subgroup of order $q-1$ in $\GG.$  Observe that  $\reduce{u_\om}$ and $[\om]_{\Fcal}$ both satisfy $\reduce{u_\om}'(0) = \reduce{\om} = [\om]_{\Fcal}'(0).$ Since the reduction map $\GG \to G \subset \Aut(\Fcal(x,y))$ is injective, we must have $\reduce{u_\om} = [\om]_{\Fcal}.$

Put $u(x) = u_2\circ u_\om$ and $\m = [\om\a_2]_{\Fcal}. $ By construction we have
\[\begin{array}{rlrl}
   \reduce{g}(x) &  =\zeta(x), & \quad  \reduce{u}(x) & =\mu(x)\quad  \text{and}  \\
   g'(0) & \equiv \beta \pmod{\Mcal_K^2} , & \quad  u'(0) & \equiv \omega\alpha_2\pmod{\Mcal_K^2}.
\end{array}\]
 In other words, $(g-[\beta]_F,u-[\omega\alpha_2]_F)=(d,w)\in Z_1^o(\zeta,\mu)$. On the other hand,  $\Ocal_K = \ZZ_p[\om\a_2]$ and $\b \in \ZZ_p[\om\a_2]$ since $K$ is unramified over $\QQ_p$ of degree $f$.
By~\cite[Corollary 9]{li2} and  the fact that the formal $\Ocal_K$-module $\left(\Fcal(x,y), \r_\Fcal\right)$ is of $\Ocal_K$-height one, we deduce that   $\dim_k(Z_1^o(\zeta,\mu)/B_1(\zeta,\mu))=0$. Hence, $(g-[\beta]_F,u-[\omega\alpha_2]_F)\in B_1(\z,\m).$ Thus  there exists  $\phi_1(x)\in\Gcal_0(\Ocal_K)$ with  $\p_1(x) \equiv x \pmod{\Mcal_K}$ such that
\[
 \phi_1\circ g\circ\phi_1^{-1}\equiv [\beta]_F\;\text{ and } \; \phi_1\circ u \circ\phi_1^{-1}\equiv  [\omega\alpha_2]_F\pmod{\Mcal_K^2}.
 \]

In particular, $\left(g-[\beta]_F\right) (x)$ is of the form $-\th_1(\z(x))$ for some $\th_1(x)\in x^2\series{M_1}{x}$ with $\th_1(x)\equiv \p_1(x) - x \pmod{\Mcal_K^2}.$ Now $\left( g - [\b]_F, u_i - [\a_i]_F \right) \in Z_1(\z, \m_i)$ by construction and  $\left(g-[\beta]_F\right)(x) =-\th_1(\z(x))$, we have  $\left([\b]_F - g, [\a_i]_F - u_i\right) \in B_1(\z, \m_i)$ for $i=1, 2.$ Since $\th_1(x)\equiv \p_1(x) - x \pmod{\Mcal_K^2}$, we conclude that the same invertible power series $\phi_1(x)$ gives
\begin{align*}
\phi_1\circ u_i \circ\phi_1^{-1}& \equiv [\alpha_i]_F\pmod{\Mcal_K^2},\; i =1 , 2\; \text{and} \\
 \phi_1\circ u_\om \circ\phi_1^{-1} & \equiv [\om]_F\pmod{\Mcal_K^2}.
\end{align*}
Therefore the claim is true for $r =1$ by setting $\psi_1 = \p_1.$

Let $r \ge 1$ and assume that there exists an invertible power series $\psi_r(x)\in \Gcal_0(\Ocal_K)$ satisfying $\psi_r(x)\equiv x\pmod{\Mcal_K^{r}}$ such that
\begin{align*}
\psi_r\circ g\circ\psi_r^{-1}& \equiv [\beta]_F \pmod{\Mcal_K^{r+1}},\;\text{and} \\
\psi_r\circ u_i \circ\psi_r^{-1}& \equiv [\alpha_i]_F\pmod{\Mcal_K^{r+1}},\;  i =1 , 2.
\end{align*}
Put $g_r = \psi_r\circ g\circ\psi_r^{-1}, u_{i,r} = \psi_r\circ u_i \circ\psi_r^{-1}, i=1, 2$ and $ u_{\om,r} = \psi_r\circ u_\om \circ\psi_r^{-1}$. Notice that we also have
$u_{\om,r}'(0)= \om $  and $\reduce{u_{\om,r}} = [\om]_{\Fcal}.$ Similarly, we see that $\a_1, \a_2$ and $\b$ as well as $u_{1,r}, u_{2,r}$ and $g_r$ satisfy the conditions in Lemma~\ref{lem:same} for $r+1$. Hence,
\[
g_r'(0)\equiv \b \pmod{\Mcal_K^{r+2}} \quad \text{and} \quad u_{2,r}'(0) \equiv \a_2\pmod{\Mcal_K^{r+2}}.
\]

Let $u_r = u_{2,r}\circ u_{\om,r}$ then $u_r$ satisfies  $u_r'(0) \equiv \om \a_2 \pmod{\Mcal_K^{r+2}}$ and $\reduce{u_r} = \m.$ Thus,
$(g_r - [\b]_F, u_r - [\a_2\om]_F)\in Z_{r+1}^o(\z,\m).$ The same reasoning as in the case for $r=1$, we also have $\dim_k(Z_{r+1}^o(\zeta,\mu)/B_{r+1}(\zeta,\mu))=0$ and hence  there exists an invertible power series  $\p_{r+1} \in \Gcal_0(\Mcal_K)$ with $\p_{r+1}(x)\equiv x\pmod{\Mcal_K^{r+1}}$ such  that
\begin{align*}
\p_{r+1}\circ g_r\circ\p_{r+1}^{-1}& \equiv [\beta]_F \pmod{\Mcal_K^{r+2}},\;\text{and} \\
\p_{r+1}\circ u_r \circ\p_{r+1}^{-1}& \equiv [\alpha_2\om]_F\pmod{\Mcal_K^{r+2}}.
\end{align*}
This also leads to
\[
\p_{r+1}\circ u_{i,r} \circ\p_{r+1}^{-1} \equiv [\alpha_i]_F\pmod{\Mcal_K^{r+2}}
\]
since by induction hypothesis  $(g_r - [\b]_F, u_{i,r} - [\a_i]_F)\in Z_{r+1}(\z, \m_i)$ for $i=1, 2$. Thus   $\psi_{r+1} = \p_{r+1}\circ \psi_r$  gives the desired invertible power series for the case of $r+1$ and completes the inductive proof for the claim.

The claim shows that there exists a sequence of invertible power series  $\{\psi_r\; : \; \psi_r\in \Dcal_0(\Ocal_K), r \ge 1\}$ such that $\psi_{r+1}(x)\equiv \psi_r(x)\pmod{\Mcal_K^r}$ for all $r\ge 1.$ Therefore $(\psi_r)_{r\ge 1}$ converges to an invertible power series $\p\in \Gcal_0(\Ocal_K)$ coefficientwise  such that $\reduce{\p}(x) = x$ and $\p\circ g \circ \p^{-1} = [\b]_F$ as desired. The proof for   Theorem~\ref{thm:lubin's conjecture}  will be completed provided that Lemma~\ref{lem:same} is proved.

\begin{proof}[Proof of Lemma~\ref{lem:same}]
By the theory of Lubin-Tate formal groups, we know that our formal group $F(x,y)$ is associated to a uniformizer  $\Pi$ of $K.$ Namely, we have
\[
[\Pi]_F(x)\equiv \Pi x \pmod{\text{degree 2}}, \quad [\Pi]_F(x) \equiv x^q \pmod{\Mcal_K}.
\]
On the other hand, we observe that the truth of the lemma remains unchanged under the conjugation by any invertible power series $\p\in \Gcal_0(\Ocal_K).$ Also, by~\cite[Theorem~1]{lubintate} all Lubin-Tate formal group associated to $\Pi$ are isomorphic over $\Ocal_K.$ Thus, we may prove the lemma under the assumption that our Lubin-Tate formal group $F(x,y)$ is the one such that  $[\Pi]_F(x) = \Pi x + x^q.$ We remark that under this assumption, by Lazard's comparison lemma \[F(x,y)\equiv x+y+\frac{1}{\Pi-\Pi^q}((x+y)^q-x^q-y^q)\pmod{\mathrm{degree}\,\,q+2}.\]

Let $\g \in \Mcal_K$ with $v_K(\g) = t \ge 1,$ then the endomorphism $[\g]_\Fcal$ of the formal group $\Fcal(x,y)$ is of the form $[\g]_{\Fcal}(x) = \hat{\l}(x^{q^t})$ where  $\hat{\l}$ is an automorphism of $\Fcal(x,y)$ as can be seen from~\cite[Theorem 6.3]{Lub}. In particular, $[\g]_{\Fcal}(x) = c_\g x^{q^t} + h(q^{t})$ with $\ord_x h(x) \ge 2$ and $c_\g\in \FF_q$ is a nonzero constant. Let's write $\a_i = 1 + \g_i,$ with $t_i = v_K(\g_i)$ for $ i=1, 2.$ By assumption $0 < t_1 < s \le t_2,$ where $s = v_K(g'(0)) = v_K(\b).$ From the remark above we have
\[
[\a_i]_\Fcal (x) = \Fcal(x, [\g_i]_\Fcal(x)) = x + c_i x^{q^{t_i}} \pmod{x^{q^{t_i} + 2}}, \; i =1 ,2.
\]
For $i=1, 2$ we set $\m_i = [\a_i]_{\Fcal}$ and  let $(d, w_i)\in \series{M_r}{x}\oplus \series{M_r}{x}$ be defined as follows
\begin{align*}
 d(x) & \equiv g(x) - [\b]_F(x) \pmod{\Mcal_K^{r+1}} \quad\text{and}  \\
 w_i(x) & \equiv u_i(x) - [\a_i]_F(x) \pmod{\Mcal_K^{r+1}}.
\end{align*}
By assumption, $(d, w_i) \in Z_r(\z, \m_i)$ where $\z = [\b]_\Fcal.$ Thus, $w_i$ satisfies the following functional equations.
\begin{equation}
\label{eq:functional}
d(\m_i(x)) = d(x)\cdot \m_i'(\z(x))+ w_i(\z(x)),\; i =1 ,2.
\end{equation}

Reducing~\eqref{eq:functional} modulo $x^{q^{t_i}+1}$ and using the fact that $\z(x) =\hat{\z}(x^{q^s})$ where $\hat{\z}(x) = c_3 x + O(x^2),\; c_3 \ne 0,$ we obtain the following
\begin{equation}
\label{eq:functional2}
  d(x+c_ix^{q^{t_i}})\equiv d(x)+ w_2(c_3x^{q^{s}} + O(x^{q^{2s}}))\pmod{x^{q^{t_i}+1}}, \; i =1 , 2.
\end{equation}
Since $t_1 < s$,  the above equation becomes $d(x+c_1x^{q^{t_1}})\equiv d(x) \pmod{x^{q^{t_1}+1}}$ for $i=1$. Then we must have $d'(0) = 0 $ in $M_r$  for otherwise the leading term of $d(x+c_1x^{q^{t_1}})- d(x)$ is of degree $q^{t_1}$ which is impossible. On the other hand, the assumption that $s\le t_2$  together with the fact that $d(x)$ does not have linear term, Equation~\eqref{eq:functional2} for $i=2$ shows that $w_2(x)$ can not have linear term either. Equivalently, $w_2'(0) = 0$ in $M_r.$ We have shown
\[
g'(0) \equiv \b \pmod{\Mcal_K^{r+1}} \;\;\text{and}\;\; u_2'(0) \equiv \a_2 \pmod{\Mcal_K^{r+1}}
\]
which are the desired congruences and Lemma~\ref{lem:same} is proved.
\end{proof}

\subsection{Final remarks}
\label{subsec:final remarks}
Let $g \in \Dcal_0(\Ocal_K)$ be a noninvertible power series and let $\Lambda_n(g)$ be the set of roots of its $n$-th iterate. That is,
\[
\Lambda_n(g) = \{\l\in \Kbar \mid g^{\circ n}(\l) = 0 \}.
\]
Let $K_{g,n} = K(\Lambda_n(g))$ be the Galois extension generated by $\Lambda_n(g)$ over $K.$ It is a natural question about the characterization of Galois groups for $K_{g,n}$ over $K.$ For arbitrary stable power series, it's not reasonable to expect a good answer to this question. However, if $g$ is an endomorphism of a formal group over $\Ocal_K$ then from the theory of formal groups a good description of the Galois group for $K_{g,n}$ over $K$ can be obtained.
As a direct consequence of Theorem~\ref{thm:lubin's conjecture}, we have the following result.
\begin{corollary}
Let $g\in \Dcal_0(\Ocal_K)$ be a stable noninvertible power series of height $d = [K :\QQ_p]$ and  satisfies either of  condition~(1) or~(2) in Theorem~\ref{thm:lubin's conjecture}.  Let $K_{g,n}$ be defined as above.  Then $K_{g,n}$ is an abelian extension over $K$ with Galois group isomorphic to $ \left(\Ocal_K/(g'(0)^n) \right)^{\ast}.$
\end{corollary}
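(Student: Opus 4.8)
The plan is to reduce the corollary to the classical structure theory of Lubin--Tate extensions, with Theorem~\ref{thm:lubin's conjecture} supplying all the substantive input. First I would invoke that theorem: under the stated hypotheses it produces a Lubin--Tate formal group $\Gcal(x,y)\in\series{\Ocal_K}{x,y}$, associated to some uniformizer $\varpi$ of $\Ocal_K$, of which $g(x)$ is an endomorphism. Recall that $\Gcal$ induces a ring isomorphism $\rho_{\Gcal}\colon\Ocal_K\to\End(\Gcal)$ with $\partial_0\circ\rho_{\Gcal}=\mathrm{id}$; since $g\in\End(\Gcal)$, this forces $g=[g'(0)]_{\Gcal}$, and because $\rho_{\Gcal}$ carries products in $\Ocal_K$ to compositions of endomorphisms, $g^{\circ n}=[g'(0)^{n}]_{\Gcal}$ for every $n\ge 1$.

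Next I would identify the splitting set with formal-group torsion. From the power-series identity $g^{\circ n}=[g'(0)^{n}]_{\Gcal}$ the root set $\Lambda_n(g)$ of $g^{\circ n}$ in $\Kbar$ is exactly the kernel of $[g'(0)^{n}]_{\Gcal}$, i.e.\ the $\Ocal_K$-submodule $\Gcal[g'(0)^{n}]$ of $\varpi$-power torsion points of $\Gcal$. Writing $m=v_K(g'(0))\ge 1$, the ideals $(g'(0)^{n})$ and $(\varpi^{mn})$ coincide, so $\Gcal[g'(0)^{n}]=\Gcal[\varpi^{mn}]$ and hence $K_{g,n}=K\bigl(\Gcal[\varpi^{mn}]\bigr)$. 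As a consistency check one has $\wi(g^{\circ n})=q^{mn}=\#\bigl(\Ocal_K/(g'(0)^{n})\bigr)$, in agreement with the fact that $\Gcal[\varpi^{mn}]$ is free of rank one over $\Ocal_K/\varpi^{mn}\Ocal_K$; no root is lost in this identification precisely because all roots of the iterates of $g$ are simple, which is part of the hypotheses of Theorem~\ref{thm:lubin's conjecture}.

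Finally I would quote Lubin--Tate theory: for each $N$ the field $K\bigl(\Gcal[\varpi^{N}]\bigr)$ is a totally ramified abelian extension of $K$, and the Galois action on $\Gcal[\varpi^{N}]$---which is automatically $\Ocal_K$-linear---induces an isomorphism $\Gal\bigl(K(\Gcal[\varpi^{N}])/K\bigr)\simeq\bigl(\Ocal_K/\varpi^{N}\Ocal_K\bigr)^{\ast}$. Taking $N=mn$ and using $\Ocal_K/\varpi^{mn}\Ocal_K=\Ocal_K/(g'(0)^{n})$ then gives that $K_{g,n}/K$ is abelian with $\Gal(K_{g,n}/K)\simeq\bigl(\Ocal_K/(g'(0)^{n})\bigr)^{\ast}$, which is the assertion.

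Since Theorem~\ref{thm:lubin's conjecture} is granted, there is no serious obstacle here; the only steps requiring a moment's care are the passage from $g^{\circ n}$ to $[g'(0)^{n}]_{\Gcal}$ (using $\End(\Gcal)\simeq\Ocal_K$) and the replacement of the ideal $(g'(0)^{n})$ by $(\varpi^{mn})$, after which everything is a direct transcription of standard facts. It is worth recording the two regimes separately: in case~(1), $g'(0)$ is itself a uniformizer of $\Ocal_K$, so $K_{g,n}$ is literally the level-$n$ Lubin--Tate extension attached to $g'(0)$; in case~(2), $K$ is unramified over $\QQ_p$, so $g'(0)=(\text{unit})\cdot p^{m}$ and $K_{g,n}=K\bigl(\Gcal[p^{mn}]\bigr)$.
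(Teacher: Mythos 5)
Your proposal is correct and is exactly the argument the paper intends: the corollary is stated there as "a direct consequence" of Theorem~\ref{thm:lubin's conjecture} with no written proof, and your route --- identifying $g^{\circ n}$ with $[g'(0)^n]_{\Gcal}$ via $\End(\Gcal)\simeq\Ocal_K$, equating $\Lambda_n(g)$ with the $\varpi^{mn}$-torsion, and quoting the Lubin--Tate description of $\Gal\bigl(K(\Gcal[\varpi^{mn}])/K\bigr)$ --- is the standard filling-in of that deduction. The only cosmetic remark is that simplicity of the roots of the iterates is automatic once $g$ is known to be a Lubin--Tate endomorphism, so it need not be re-invoked from the hypotheses at that point.
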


 Let $\Dcal \subset \Dcal_0(\Ocal_K)$ be a family of commuting power series. Suppose that $\partial_0 : \Dcal \to \Ocal_K$ is surjective. By~\cite[Lemma~3.1]{li-proceedings}, any two commutative noninvertible power series have the same height. Therefore, the heights of noninvertible power series in $\Dcal$ are the same. Define the height of $\Dcal$ to be the height of any noninvertible power series in $\Dcal.$ Following~\cite[Definition~4.3.1]{lubin-64},  $\Dcal$ is called {\em full}~\cite[Remark~3.1]{sarkis10} if the map $\partial_0 : \Dcal \to \Ocal_K$ is surjective and the height of $\Dcal$ is equal to $[K : \QQ_p].$ If $\Dcal$ is full then  Sarkis conjectures that $\Dcal = \End_{\Ocal_K}(F)$ for some formal group $F(x,y)$ over $\Ocal_K.$ As another application of Theorem~\ref{thm:lubin's conjecture}, we give a proof of his conjecture.

 \begin{corollary}
Suppose that  $\Dcal \subset \Dcal_0(\Ocal_K)$ is full, then $\Dcal = \End_{\Ocal_K}(F)$ for some Lubin-Tate formal group over $\Ocal_K.$
 \end{corollary}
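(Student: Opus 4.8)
The plan is to locate inside $\Dcal$ a single noninvertible series to which Theorem~\ref{thm:lubin's conjecture} applies, and then to bootstrap the conclusion to the whole family. Since $\partial_0(\Dcal)=\Ocal_K$ is surjective, I would first choose $g\in\Dcal$ with $g'(0)=\pi$. Then $g$ is stable and noninvertible with $v_K(g'(0))=1$, and $\hgt(g)$ equals the height of $\Dcal$, namely $[K:\QQ_p]=ef$, by fullness. Every invertible member of $\Dcal$ commutes with $g$ and so lies in $\Stab_{\Ocal_K}(g)$; since $\partial_0(\Dcal)=\Ocal_K\supseteq U_K$, this forces $\partial_0(\Stab_{\Ocal_K}(g))=U_K$ (the reverse inclusion coming from Proposition~\ref{prop:multiplier}). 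Thus condition~(1) of Theorem~\ref{thm:lubin's conjecture} is satisfied, and $g$ is an endomorphism of a Lubin--Tate formal group $F$ over $\Ocal_K$; as $\End_{\Ocal_K}(F)\cong\Ocal_K$ via $\partial_0$, in fact $g=[\pi]_F$.

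Next I would pin down the invertible part of $\Dcal$. The group $\Aut_{\Ocal_K}(F)=\{[\a]_F\mid \a\in U_K\}$ sits inside $\Stab_{\Ocal_K}(g)$ with the same $\partial_0$-image $U_K$, and $\partial_0$ is injective on $\Stab_{\Ocal_K}(g)$ by Proposition~\ref{prop:multiplier}; hence $\Stab_{\Ocal_K}(g)=\Aut_{\Ocal_K}(F)$. The invertible members of $\Dcal$ form a subset of $\Stab_{\Ocal_K}(g)$ whose image under $\partial_0$ is all of $U_K$, so by the same injectivity they coincide with $\{[\a]_F\mid \a\in U_K\}$. In particular every $u\in\Dcal$ commutes with $[\pi]_F$ and with every $[\a]_F$, $\a\in U_K$, and therefore (composing $[\pi]_F$ with automorphisms) with $[\a]_F$ for every $\a\in\Ocal_K$.

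The core step is to show that any $u\in\series{\Ocal_K}{x}$ without constant term which commutes with $[\a]_F$ for all $\a\in\Ocal_K$ lies in $\End_{\Ocal_K}(F)$. Commuting with $[\pi^n]_F=[\pi]_F^{\circ n}$, the series $u$ carries the $\pi^n$-torsion $F[\pi^n]$ into itself; since $F[\pi^n]$ is a cyclic $\Ocal_K$-module and $u$ commutes with every $[\a]_F$, evaluating on a generator gives $u|_{F[\pi^n]}=[\g_n]_F$ for a single $\g_n\in\Ocal_K/\pi^n$. These $\g_n$ are compatible, so they define $\g\in\Ocal_K$ with $u=[\g]_F$ on every torsion point of $F$. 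To upgrade this to an identity of power series, I would use that $[\pi^n]_F$ is a distinguished series of Weierstrass degree $q^n$ whose $q^n$ zeros (the $\pi^n$-torsion points) are simple: Weierstrass division then gives $[\pi^n]_F\mid\bigl(u-[\g]_F\bigr)$ in $\series{\Ocal_K}{x}$ for every $n$. Writing $u-[\g]_F=[\pi^n]_F\cdot Q_n$ with $Q_n\in\series{\Ocal_K}{x}$, and noting that for each fixed $j$ the $v_K$-valuation of the coefficient of $x^j$ in $[\pi]_F^{\circ n}$ tends to $\infty$ as $n\to\infty$ --- an easy induction on $n$ from $[\pi]_F(x)\equiv x^q\pmod{\Mcal_K}$ --- we conclude that every coefficient of $u-[\g]_F$ is divisible by arbitrarily large powers of $\pi$, hence $u=[\g]_F\in\End_{\Ocal_K}(F)$.

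Assembling the pieces, $\Dcal\subseteq\End_{\Ocal_K}(F)$; since $\partial_0$ is injective on $\End_{\Ocal_K}(F)$ and $\partial_0(\Dcal)=\Ocal_K=\partial_0(\End_{\Ocal_K}(F))$, equality $\Dcal=\End_{\Ocal_K}(F)$ follows, with $F$ a Lubin--Tate formal group. The step I expect to be most delicate is the rigidity argument of the third paragraph, passing from agreement of $u$ with $[\g]_F$ at every torsion point of $F$ to an identity of power series; it uses both the simplicity of the roots of the iterates of $[\pi]_F$ and the $\pi$-adic degeneration of those iterates in low degree, and it is precisely here that the hypothesis $\partial_0(\Dcal)=\Ocal_K$ --- not merely surjectivity onto the noninvertible part --- is needed, so as to exclude ``condensation''-type series commuting with $g$ alone.
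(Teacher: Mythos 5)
Your proof is correct, and its skeleton coincides with the paper's: pick $g\in\Dcal$ with $\partial_0(g)=\pi$, observe that fullness supplies the hypotheses of Theorem~\ref{thm:lubin's conjecture}(1) (stability, $\hgt(g)=ef$, and $\partial_0(\Stab_{\Ocal_K}(g))=U_K$, the latter sandwiched between the invertible part of $\Dcal$ and Proposition~\ref{prop:multiplier}), conclude $g=[\pi]_F$ for a Lubin--Tate formal group $F$, and finish by playing the surjectivity of $\partial_0$ on $\Dcal$ against its injectivity on $\End_{\Ocal_K}(F)\cong\Ocal_K$. Where you genuinely add something is the inclusion $\Dcal\subseteq\End_{\Ocal_K}(F)$: the paper disposes of it in one sentence (``From this, we have $\Dcal\subset\End_{\Ocal_K}(F)$''), which is immediate for the invertible members of $\Dcal$ but not for the noninvertible ones, whereas you prove it in full. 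Your torsion-point rigidity argument is sound: a series commuting with every $[\a]_F$ stabilizes each $F[\pi^n]$ and, by cyclicity, acts there as some $[\g_n]_F$; the remainder in the Weierstrass division by $[\pi^n]_F$ is a polynomial of degree less than $q^n$ with $q^n$ distinct roots, hence zero; and the induction showing that each fixed coefficient of $[\pi]_F^{\circ n}$ has $v_K$-valuation tending to infinity does go through (compose as $[\pi]_F\circ[\pi]_F^{\circ n}$ so that the linear term contributes an extra factor of $\pi$ at each step). The shortcut the authors presumably have in mind is Lubin's rigidity principle from \cite{Lub}, that a power series commuting with the stable noninvertible series $g$ is determined by its nonzero linear coefficient, so that $h\in\Dcal$ with $\partial_0(h)=\b$ must equal $[\b]_F$. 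That is shorter; your route avoids invoking that uniqueness theorem for noninvertible commuting series, and it isolates exactly which commutation relations are used (all of $\Aut_{\Ocal_K}(F)$, not merely $g$), at the price of the Weierstrass and convergence bookkeeping.
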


 \begin{proof}
 Since $\Dcal$ is full, there exists a noninvertible power series $g \in \Dcal$ with $g'(0) = \partial_0(g) = \pi$ and $\hgt(g) = [K :\QQ_p].$   Notice that $\Stab_{\Ocal_K}(g) = \partial_0^{-1}(U_K).$ It's clear that $g$ satisfies condition~(1) in Theorem~\ref{thm:lubin's conjecture}. Therefore, $g\in \End_{\Ocal_K}(F)$ and $\Stab_{\Ocal_K}(g) = \Aut_{\Ocal_K}(F)$
 for some Lubin-Tate formal group over $\Ocal_K.$ From this, we have $\Dcal\subset \End_{\Ocal_K}(F).$

 As $F(x,y)$ is a Lubin-Tate formal group over $\Ocal_K,$ its endomorphism ring is isomorphic to $\Ocal_K$ with isomorphism $\partial_0 : \End_{\Ocal_K}(F) \to \Ocal_K.$ Since the restriction of the map $\partial_0$ to $\Dcal$ is surjective, we must have $\Dcal = \End_{\Ocal_K}(F)$ as asserted.
 \end{proof}

It is natural to ask whether or not conditions~(1) and (2) in Theorem~\ref{thm:lubin's conjecture} are necessary.
We believe that the conclusion in Theorem~\ref{thm:lubin's conjecture} is still true without these two conditions. We state the version of Theorem~\ref{thm:lubin's conjecture} without condition~(1) and (2), which strengthens Sarkis' conjecture on full set of commuting formal power series.

\begin{conjecture}
Let $g\in \Dcal_0(\Ocal_K)$ be a stable noninvertible power series over $\Ocal_K$ with height equal to $[K :\QQ_p]$ such  that  $\partial_0\left(\Stab_{\Ocal_K}(g)\right) = U_K.$ Then, $g$ is an endomorphism for some Lubin-Tate formal group $F(x,y)$ over $\Ocal_K$ and $\Stab_{\Ocal_K}(g) = \Aut_{\Ocal_K}(F).$
\end{conjecture}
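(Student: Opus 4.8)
The plan is to follow the architecture of the proof of Theorem~\ref{thm:lubin's conjecture}, and to replace the two places where its conditions~(1) and~(2) were used by arguments valid in general. Write $\GG=\Stab_{\Ocal_K}(g)$, $G=\reduce{\GG}\subset\Aut_{\FF_q}\!\left(\laurent{\FF_q}{x}\right)$, set $d=[K:\QQ_p]=ef$, fix $r>e/(p-1)$, pass to $G^{(r)}\simeq\ZZ_p^d$, and let $N/M$ be the extension attached to $\left(\laurent{\FF_q}{x},G\right)$ by the field of norms functor. The backbone one wants is: (a) $M$ is a $p$-adic field with $e_M=e$ and residue degree $f$; (b) $N/M$ is the maximal totally ramified abelian extension of $M$; (c) hence $\reduce{g}$ is an endomorphism $[\b]_{\Fcal}$ of the reduction $\Fcal$ of a Lubin--Tate formal group $F$ over $\Ocal_M$, and some $u\in\GG$ satisfies $\reduce{u}=\g$, where $\reduce{g}(x)=\g(x^{q^{s}})$ and $s=v_K(g'(0))$; (d) lift $[\b]_{\Fcal}$ to an endomorphism structure in characteristic zero.

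The first gap relative to Theorem~\ref{thm:lubin's conjecture} is the hypothesis that all roots of the iterates of $g$ are simple, which is what licenses Lemma~\ref{lem:height} and Lemma~\ref{lem:findi_n}, and thus Proposition~\ref{prop:lubin char 0} and Proposition~\ref{prop:ramification index computation}. I would first try to prove that simplicity is forced by the hypotheses: since $\partial_0(\GG)=U_K$, every $u\in\GG$ commutes with $g^{\circ n}$ and hence permutes $\Lambda_n(g)$, and the chain rule shows that the set of critical points of $g$ lying in $\Lambda_n(g)$ is a union of $\GG$-orbits, with $u'(\l)$ a unit whenever $v(\l)>0$; comparing the finite, $n$-independent number of critical points of $g$ in $\Mcal_{\Kbar}$ against the orbit structure of the $U_K$-action on $\Lambda_n(g)$ --- which grows like $\wi(g)^{n}=q^{sn}$ since $\hgt(g^{\circ n})=\hgt(g)=d$ forces $\wi(g^{\circ n})=\wi(g)^{n}$ --- should rule out any critical point lying in $\bigcup_n\Lambda_n(g)$; failing that, one proves versions of Li's two lemmas that do not presuppose simplicity. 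Granting this, parts~(a), (b), (c) should go through as in \S\ref{subsec:apply FON}, the one modification being the divisibility input ``$\ell+ne=ms$'' used there: one arranges it by enlarging $r$ to a multiple of $\gcd(e,s)$ and testing with $u\in\GG$ for which $v_K(u'(0)-1)$ is divisible by $\gcd(e,s)$, and then extends the resulting formula $i_n(\s)+1=q^{\ell+ne}$ to all of $G^{(r)}$ using the continuity statement of Proposition~\ref{prop:continuityofi} together with the density of such test elements.

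The heart is step~(d), the lifting, and this is where I expect the real difficulty. If $s=1$ one is in Case~(I) of Theorem~\ref{thm:lubin's conjecture}: twisting $g$ by the $u\in\GG$ with $\reduce{u}=\g$ gives $h=g\circ u^{\circ(-1)}$ with $\reduce{h}(x)=x^{q}$ and $v_K(h'(0))=1$, so Lubin--Tate theory over $\Ocal_K$ directly exhibits $h$, hence $g$, as an endomorphism of a Lubin--Tate formal group over $\Ocal_K$, and $\partial_0(\GG)=U_K$ upgrades this to $\GG=\Aut_{\Ocal_K}(F)$. When $s\ge 2$ one has to run the inductive construction of Case~(II): build $\p_r\in\Gcal_0(\Ocal_M)$ with $\p_r\equiv x\pmod{\Mcal_M^{r}}$ step by step, using the vanishing $\dim_{\FF_q}\!\bigl(Z_r^o(\z,\m)/B_r(\z,\m)\bigr)=0$ and the congruences furnished by an analogue of Lemma~\ref{lem:same} (with test multipliers $\a_1=1+\pi_M^{s-1}$, $\a_2=1+\pi_M^{s}$).

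Two obstacles, present neither in Case~(I) nor in Case~(II), then appear simultaneously, and together they are what I expect to be the main obstacle. First, the formal group $F$ produced in part~(c) lives over $\Ocal_M$, and when $e>1$ one only knows $e_M=e$ and $f_M=f$, not $M=K$; but the lifting machinery matches $u'(0)\in U_K$ against multipliers $\a\in\Ocal_M$ of endomorphisms of $F$, so one must first prove $M=K$ --- plausibly by identifying the uniformizer class of $M$ output by the field of norms with that determined by $g'(0)$ via local class field theory, or by a direct comparison of the two Lubin--Tate towers. Second, the input $\dim_{\FF_q}(Z_r^o/B_r)=0$ was obtained in Case~(II) from \cite[Corollary~9]{li2} using that $\Ocal_K$ is generated over $\ZZ_p$ by a single unit of the form $(\text{root of unity})\cdot(1+\text{small})$, which is automatic only in the unramified case; when $e>1$ no element $u'(0)$ simultaneously generates $\Ocal_K$ over $\ZZ_p$ and meets the valuation constraint $v_K(\a_2-1)\ge v_K(\b)$ demanded by Lemma~\ref{lem:same}, so the cohomological step has to be redone --- either with several generators at once, or with a uniformizer of $\Ocal_K$ inserted as a test element. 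Making the latter compatible with the functional equations defining $Z_r$ and $B_r$, that is, controlling the lifting across the ramified layer of $\Ocal_K/\ZZ_p$, is the crux.
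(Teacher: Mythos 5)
The statement you were asked to prove is presented in the paper as an open \emph{conjecture}: the authors explicitly say only that they ``believe'' the conclusion of Theorem~\ref{thm:lubin's conjecture} remains true without conditions~(1) and~(2), and they give no proof. So there is nothing in the paper to compare your argument against, and your proposal has to stand on its own as a complete proof. It does not: it is a roadmap that correctly locates every place where the proof of Theorem~\ref{thm:lubin's conjecture} uses the two extra hypotheses, but at each such place you describe what ``should'' or ``has to'' be done rather than doing it. Concretely: (i)~simplicity of the roots of the iterates of $g$ is what licenses Lemma~\ref{lem:height} and Lemma~\ref{lem:findi_n}, hence Propositions~\ref{prop:lubin char 0} and~\ref{prop:ramification index computation}; your critical-point/orbit-counting sketch is not carried out, and Remark~\ref{remark:simple roots} points to genuine examples (condensations of endomorphisms) of noninvertible series whose iterates have multiple roots, so one must actually show the fullness hypothesis excludes them. (ii)~Your fix for the divisibility constraint $\ell+ne=ms$ via ``density plus continuity'' does not work as stated: when $\gcd(e,s)>1$, the units $u$ with $\gcd(e,s)\mid v_K(u'(0)-1)$ form a union of spheres in $U^{(r)}$, not a dense subset, and Proposition~\ref{prop:continuityofi} controls only $i_0(\cdot)$, not the whole sequence $i_n(\cdot)$; so the key formula $i_n(\s)+1=q^{\ell+ne}$, on which Propositions~\ref{prop:ramification index computation} and~\ref{prop:max totally ramf ext} rest, is not established for all $\s$.

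The decisive gap is the one you yourself label ``the crux'': step~(d) when $e>1$ and $s\ge 2$. You neither prove $M=K$ nor prove the vanishing $\dim_{\FF_q}\bigl(Z_r^o(\z,\m)/B_r(\z,\m)\bigr)=0$ that drives the inductive lifting; the paper obtains that vanishing from \cite[Corollary~9]{li2} together with Lemma~\ref{lem:same} precisely because in the unramified case a single unit $\om\a_2$ generates $\Ocal_K$ over $\ZZ_p$ while meeting the valuation constraint $v_K(\a_2-1)\ge v_K(\b)$, and you correctly observe that no such unit exists when $e>1$ and $s\ge 2$. Your diagnosis of the obstructions is accurate and is consistent with why the authors left the statement as a conjecture, but a proposal that ends by naming its own unresolved crux is not a proof.
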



\end{document}